\theoremstyle{plain}
\newtheorem{theorem}{Theorem}[section]
\newtheorem{lemma}[theorem]{Lemma}
\newtheorem{cor}[theorem]{Corollary}
\newtheorem{proposition}[theorem]{Proposition}
\newtheorem{observation}[theorem]{Observation}
\newtheorem{remark}[theorem]{Remark}
\theoremstyle{definition}
\def\finf{\mathop{{\rm I}\kern -.27 em {\rm F}}\nolimits}
\begin{document}

\title{The fractional strong metric dimension\\ in three graph products}

\author{{\bf Cong X. Kang}$^1$, {\bf Ismael G. Yero}$^2$, and {\bf Eunjeong Yi}$^3$\\
\small Texas A\&M University at Galveston, Galveston, TX 77553, USA$^{1,3}$\\
\small Universidad de C\'{a}diz, Av. Ram\'{o}n Puyol s/n, 11202 Algeciras, Spain$^2$\\
{\small\em kangc@tamug.edu}$^1$; {\small\em ismael.gonzalez@uca.es}$^2$; {\small\em yie@tamug.edu}$^3$}

\maketitle

\date{}

\begin{abstract}
For any two distinct vertices $x$ and $y$ of a graph $G$, let $S\{x, y\}$ denote the set of vertices $z$ such that either 
$x$ lies on a $y-z$ geodesic or $y$ lies on an $x-z$ geodesic. Let $g: V(G) \rightarrow [0,1]$ be a real valued function and, for any $U \subseteq V(G)$, let $g(U)=\sum_{v \in U}g(v)$. The function $g$ is a strong resolving function of $G$ if $g(S\{x, y\}) \ge 1$ for every pair of distinct vertices $x, y$ of $G$. The fractional strong metric dimension, $sdim_f(G)$, of a graph $G$ is 
$\min\{g(V(G)): g \mbox{ is a strong resolving function of }G\}$. In this paper, after obtaining some new results for all connected graphs, we focus on the study of the fractional strong metric dimension of the corona product, the lexicographic product, and the Cartesian product of graphs.
\end{abstract}

\noindent\small {\bf{Keywords:}} fractional strong metric dimension, strong metric dimension, matching number, vertex cover 
number, mutually maximally distant vertices, corona product, lexicographic product, Cartesian product\\
\small {\bf{2010 Mathematics Subject Classification:}} 05C12, 05C76\\


\section{Introduction}

Let $G$ be a finite, simple, and undirected graph with vertex set $V(G)$ and edge set $E(G)$. For a vertex $v \in 
V(G)$, the \emph{open neighborhood of $v$} is the set $N_G(v)=\{u \in V(G) \mid uv \in E(G)\}$ and the \emph{closed neighborhood of $v$} is the set $N_G[v]=N_G(v) \cup \{v\}$. Two distinct vertices $u_1$ and $u_2$ in $G$ are called \emph{true twins} if $N_G[u_1]=N_G[u_2]$, and \emph{false twins} if $N_G(u_1)=N_G(u_2)$. The \emph{degree}, $\deg_G(v)$, of a vertex $v \in V(G)$ is $|N_G(v)|$; a \emph{leaf} (or an \emph{end-vertex}) is a vertex of degree one, and we denote by $\sigma(G)$ the number of leaves of $G$. A \emph{regular graph} is a graph where each vertex has the same degree, and a regular graph with vertices of degree $k$ is called a $k$-regular graph. We denote by $\bigtriangleup(G)$ and $\delta(G)$, respectively, the maximum degree and the minimum degree of $G$. The distance between two vertices $u,v \in V(G)$, denoted by $d_G(u, v)$, is the length of a shortest path between $u$ and $v$ in $G$; we drop the subscript $G$ if it is clear in context. The diameter, $diam(G)$, of $G$ is $\max\{d(u,v) \mid u, v \in V(G)\}$. A set $S \subseteq V(G)$ is a \emph{vertex cover} of $G$ if every edge of $G$ is incident with at least one vertex of $S$, and the \emph{vertex cover number} $\alpha(G)$ of $G$ is the minimum cardinality over all vertex covers of $G$. A \textit{matching} $M$ in a graph $G$ is a set of pairwise non-adjacent edges, i.e., no two edges in $M$ share a common vertex. A \textit{maximum matching} is a matching that contains the largest possible number of edges, and the \textit{matching number} $\nu(G)$ of $G$ is the size of a maximum matching. A \emph{Hamiltonian cycle} is a cycle that visits each vertex exactly once, and a graph that contains a Hamiltonian cycle is called a \emph{Hamiltonian graph}. The \emph{complement} $\overline{G}$ of $G$ is the graph whose vertex set is $V(G)$ and $uv \in E(\overline{G})$ if and only if $uv \not\in E(G)$ for $u,v \in V(G)$. We denote by $P_n$, $C_n$, and $K_n$, respectively, the path, the cycle, and the 
complete graph on $n$ vertices.

A vertex $z \in V(G)$ \emph{strongly resolves} a pair of vertices $x,y \in V(G)$ if there exists a shortest $y-z$ path 
containing $x$ or a shortest $x-z$ path containing $y$. A set of vertices $S \subseteq V(G)$ \emph{strongly resolves} $G$ if 
every pair of distinct vertices of $G$ is strongly resolved by some vertex in $S$; then, $S$ is called a \emph{strong 
resolving set} of $G$. The \emph{strong metric dimension} of $G$, denoted by $sdim(G)$, is the minimum cardinality over all 
strong resolving sets of $G$. Seb\"{o} and Tannier \cite{MathZ} introduced strong metric dimension. They observed that 
if $S=\{w_1, w_2, \ldots, w_k\}$ is a strong resolving set, then the vectors $\{r(v | S) \mid v \in V(G)\}$ uniquely 
determine the graph $G$, where $r(v|S)=(d(v, w_1), d(v, w_2), \ldots, d(v, w_k))$; see~\cite{sdimF} for a detailed 
explanation. A vertex $u \in V(G)$ is \emph{maximally distant} from $v \in V(G)$ if $d(u,v) \ge d(w, v)$ for every $w \in 
N_G(u)$. If $u$ is maximally distant from $v$ and $v$ is maximally distant from $u$ in $G$, then we say that $u$ and $v$ are 
\emph{mutually maximally distant} in $G$, and we write $u$ MMD $v$ for short. It was shown in~\cite{sdim} that if two vertices 
$x$ and $y$ are mutually maximally distant in $G$, then any strong resolving set $S$ of $G$ must contain either $x$ or $y$. 
Following~\cite{sdim_Cartesian}, the strong resolving graph of $G$, denoted by $G_{SR}$, has vertex set $V(G_{SR})=M(G)=\{x \in 
V(G): \exists y \in V(G) \mbox{ with } x \mbox{ MMD } y\}$ and $uv \in E(G_{SR})$ if and only if $u$ and $v$ are mutually 
maximally distant in $G$. Oellermann and Peters-Fransen~\cite{sdim} proved that determining the strong metric dimension of a 
graph is an NP-hard problem.

Let $S\{x, y\}$ denote the set of vertices $z$ such that $x$ lies on a $y-z$ geodesic or $y$ lies on an $x-z$ geodesic in $G$. Let $g: V(G) \rightarrow [0,1]$ be a real valued function and, for any set $U \subseteq V(G)$, let $g(U)=\sum_{v \in U}g(v)$. The function $g$ is a \emph{strong resolving function} of $G$ if $g(S\{x, y\}) \ge 1$ for every pair of distinct vertices $x, y$ of $G$. The \emph{fractional strong metric dimension} of $G$, denoted by $sdim_f(G)$, is $\min\{g(V(G)): g \mbox{ is a strong resolving function of }G\}$. Notice that $sdim_f(G)$ reduces to $sdim(G)$ if the codomain of strong resolving functions is restricted to $\{0,1\}$. Fractional strong metric dimension was introduced in~\cite{sdimF}, and further studied in~\cite{fracsdim}. For the fractionalization of graph parameters, see~\cite{new3}.

In this paper, we obtain some interesting new results on the fractional strong metric dimension of connected graphs, and investigate the fractional strong metric dimension of the corona product, the lexicographic product, and the Cartesian product of graphs. We refer to~\cite{book-products} on the product of graphs. This paper is organized as follows. In section 2, we review some known results and also obtain new results on the fractional strong metric dimension of graphs. We also provide a family of graphs $\mathcal{F}$ such that, for $G \in \mathcal{F}$, $\min\{\frac{|M(G)|}{2}, sdim(G)\}$ can be arbitrarily larger than $sdim_f(G)$. In section 3, we study the fractional strong metric dimension of corona product graphs $G \odot H$; we explicitly compute $sdim_f(G \odot H)$ for a connected graph of order at least two, and obtain bounds for $sdim_f(K_1 \odot H)$. In section 4, we study the fractional strong metric dimension of lexicographic product graphs $G[H]$, where $G$ and $H$ each is a graph of order at least two, with $G$ being connected. Based on~\cite{lexi_dimF} and~\cite{lexi_sdim}, we obtain some bounds for $sdim_f(G[H])$. We also obtain some classes of graphs satisfying $sdim_f(G[H])=\frac{|V(G)|\cdot|V(H)|}{2}$. In section 5, we study the fractional strong metric dimension of Cartesian product graphs $G \square H$, where both $G$ and $H$ are connected graphs of order at least two. We obtain sharp bounds for $sdim_f (G \square H)$, and obtain some classes of graphs satisfying $sdim_f(G \square H)=\frac{|M(G)|\cdot|M(H)|}{2}$. We also provide a family of Cartesian product graphs such that $\frac{|M(G)| \cdot|M(H)|}{2}-sdim_f(G \square H)$ can be arbitrarily large.


\section{Some results on arbitrary connected graphs}\label{sect-general}

In this section, we recall some known results and also obtain new results on the fractional strong metric dimension of connected graphs; these are useful in the sections that follow. We first recall some known results.

\begin{observation}\emph{\cite{sdimF}}\label{observation-1}
Let $G$ be a connected graph. 
\begin{itemize}
\item[\emph{(a)}] If $v$ is a cut-vertex of $G$, then $g(v)=0$ for any minimum strong resolving function $g$ of $G$;
\item[\emph{(b)}] If $x$ MMD $y$ in $G$, then $S\{x,y\}=\{x,y\}$ and hence $g(x)+g(y)\ge 1$ for any strong resolving 
    function $g$ of $G$.
\end{itemize}
\end{observation}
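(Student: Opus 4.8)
For part \emph{(b)}, I would argue directly from the definition of $S\{x,y\}$. Since $x$ is an endpoint of every $y$-$x$ geodesic, and $y$ is an endpoint of every $x$-$y$ geodesic, both $x$ and $y$ belong to $S\{x,y\}$. It then remains to exclude any third vertex. Suppose $z \in S\{x,y\}$ with $z \notin \{x,y\}$; by symmetry assume $x$ lies on a $y$-$z$ geodesic, so $d(y,z) = d(y,x) + d(x,z)$ with $d(x,z) \ge 1$. Let $w$ be the neighbor of $x$ immediately following $x$ on this geodesic toward $z$; then $w \in N_G(x)$ and $d(y,w) = d(y,x) + 1 > d(x,y)$, which contradicts $x$ being maximally distant from $y$. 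Hence $S\{x,y\} = \{x,y\}$, and the defining inequality $g(S\{x,y\}) \ge 1$ of a strong resolving function immediately yields $g(x) + g(y) \ge 1$.

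For part \emph{(a)}, the plan is to show that a cut-vertex carries no weight because it can be removed from the support of any strong resolving function without harm. First I would record that a cut-vertex $v$ is never maximally distant from any vertex. Writing $C_1,\dots,C_k$ (with $k \ge 2$) for the components of $G-v$ and noting that $v$ has a neighbor in each, take any $u \neq v$, say $u \in C_i$, together with a neighbor $w$ of $v$ in some $C_j$ with $j \neq i$; since $v$ separates $u$ from $w$ we get $d(u,w) = d(u,v) + 1 > d(u,v)$, so $v$ is not maximally distant from $u$. Thus $v \notin M(G)$, and in particular $v$ can never be an endpoint of a pair of mutually maximally distant vertices.

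The reduction that finishes part \emph{(a)} rests on the following claim, which I regard as the crux: for every pair of distinct vertices $x,y$, the set $S\{x,y\}$ contains the two endpoints of some edge of $G_{SR}$, i.e. a pair of vertices that are MMD. Granting this, such a pair automatically avoids the cut-vertex $v$ by the previous paragraph, so part \emph{(b)} gives $g(S\{x,y\} \setminus \{v\}) \ge 1$ for \emph{every} pair (pairs with $v \notin S\{x,y\}$ being trivially unaffected). Consequently, given any strong resolving function $g$, the function $g'$ agreeing with $g$ except that $g'(v)=0$ is again a strong resolving function, with $g'(V(G)) = g(V(G)) - g(v)$. If $g$ is minimum, then $g'(V(G)) \ge g(V(G))$ forces $g(v) \le 0$, whence $g(v)=0$.

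The main obstacle is proving the MMD-containment claim, and it is genuinely delicate: one cannot simply take the two ``extreme'' vertices $a \in S\{x,y\}$ maximally distant from $y$ and $b \in S\{x,y\}$ maximally distant from $x$, since the four-cycle $a\text{-}x\text{-}y\text{-}b\text{-}a$ shows that these two vertices need not be MMD with each other (although an MMD pair does still sit inside $S\{x,y\}$ there). I would instead argue by a geodesic-extension procedure: starting from a vertex of $S\{x,y\}$ that is maximally distant from an endpoint of the pair, repeatedly pass to an eccentric vertex to reach a stable pair that is MMD, and then verify that this construction can be carried out so that both resulting vertices remain in $S\{x,y\}$. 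Establishing that mutuality of maximal distance and membership in $S\{x,y\}$ hold simultaneously is the step I expect to require the most care.
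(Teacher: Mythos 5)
Your part (b) is correct and complete: placing a third vertex $z$ in $S\{x,y\}$ forces a neighbor $w$ of $x$ (or of $y$) with $d(y,w)=d(y,x)+1$, contradicting maximal distance, and $g(x)+g(y)\ge 1$ then falls out of the definition of a strong resolving function. Note that the paper states this observation without proof (it is cited from \cite{sdimF}), so there is no in-text argument to compare against; judged on its own, your part (a), by contrast, is a correct plan rather than a proof. Everything in it reduces to the claim you yourself flag as the crux: \emph{every} set $S\{x,y\}$ contains both endpoints of an MMD pair. The surrounding reduction (a cut-vertex $v$ has a neighbor in each component of $G-v$, hence $v\notin M(G)$; zero out $g$ at $v$ and use part (b) on the MMD pair inside each $S\{x,y\}$; invoke minimality) is routine once that claim is granted, so the unproved claim is the entire mathematical content of part (a), and you leave it as a sketched ``geodesic-extension procedure'' with the hard step explicitly deferred.

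The claim is true, and your instinct about how to prove it is right, but the extension must be run \emph{asymmetrically}, which is exactly the point your $C_4$ example warns about. Concretely: starting from $x$, while $x_i$ is not maximally distant from $y$ replace it by a neighbor $x_{i+1}$ with $d(x_{i+1},y)=d(x_i,y)+1$; this terminates at a vertex $u$ maximally distant from $y$ with $x$ on a $u$--$y$ geodesic. Then repeat starting from $y$, extending away from $u$ (not away from $x$), to obtain $u'$ maximally distant from $u$ with $y$ on a $u$--$u'$ geodesic. Mutuality now follows from the triangle inequality: for any $w\in N_G(u)$, $d(w,u')\le d(w,y)+d(y,u')\le d(u,y)+d(y,u')=d(u,u')$, using that $u$ is maximally distant from $y$ and that $y$ lies on a $u$--$u'$ geodesic; so $u$ MMD $u'$. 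Membership also holds: $u\in S\{x,y\}$ since $x$ lies on a $y$--$u$ geodesic, and from $d(u,u')=d(u,y)+d(y,u')$ together with $d(u,y)=d(u,x)+d(x,y)$ the triangle inequality forces $d(x,u')=d(x,y)+d(y,u')$, i.e., $y$ lies on an $x$--$u'$ geodesic, so $u'\in S\{x,y\}$ as well. This lemma is precisely the engine behind Theorem~\ref{v_cover} ($sdim(G)=\alpha(G_{SR})$) of \cite{sdim}; with it spelled out, your argument for part (a) closes.
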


\begin{theorem}\label{sdimbounds}
Let $G$ be a connected graph of order $n \ge 2$. Then $1 \le sdim_f(G) \le \frac{n}{2}$. Moreover,
\begin{itemize}
\item[\emph{(a)}]\emph{\cite{sdimF}} $sdim_f(G)=1$ if and only if $G=P_n$ and,
\item[\emph{(b)}]\emph{\cite{fracsdim}} $sdim_f(G)=\frac{n}{2}$ if and only if there exists a bijection $\alpha$ on $V(G)$ such that 
    $\alpha(v)\neq v$ and $S\{v, \alpha(v)\}=\{v,\alpha(v)\}$ for every $v\in V(G)$.
\end{itemize}
\end{theorem}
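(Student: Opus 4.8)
The plan is to prove the two bounds by elementary means and then obtain the sharpness claims (a) and (b) from the cited characterizations. For the lower bound, I would fix an arbitrary strong resolving function $g$ of $G$. Since $n \ge 2$, there is a pair of distinct vertices $x, y$, and by definition $g(S\{x,y\}) \ge 1$. As the codomain of $g$ is $[0,1]$, every value $g(v)$ is nonnegative, so $g(V(G)) \ge g(S\{x,y\}) \ge 1$ because $S\{x,y\} \subseteq V(G)$. Since $g$ is arbitrary, $sdim_f(G) \ge 1$.

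For the upper bound, I would test the constant function $g_0$ with $g_0(v) = \tfrac12$ for every $v \in V(G)$. The crucial observation is that $\{x,y\} \subseteq S\{x,y\}$ for every pair of distinct vertices: $x$ lies on the $y-x$ geodesic as its endpoint, and likewise $y$ lies on the $x-y$ geodesic, so both $x$ and $y$ belong to $S\{x,y\}$. Hence $g_0(S\{x,y\}) \ge 2 \cdot \tfrac12 = 1$, so $g_0$ is a strong resolving function and $sdim_f(G) \le g_0(V(G)) = \tfrac{n}{2}$.

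The characterizations (a) and (b) are exactly the results of \cite{sdimF} and \cite{fracsdim}, and I would cite them for the nontrivial (``only if'') directions. For completeness I would record the easy (``if'') direction of (b): if a fixed-point-free bijection $\alpha$ satisfies $S\{v,\alpha(v)\} = \{v,\alpha(v)\}$ for all $v$, then any strong resolving function $g$ obeys $g(v) + g(\alpha(v)) \ge 1$; summing this over all $v \in V(G)$ and using that $\alpha$ permutes $V(G)$ gives $2\,g(V(G)) \ge n$, whence $g(V(G)) \ge \tfrac{n}{2}$, and combined with the upper bound this forces equality.

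The bounds themselves present no real obstacle, resting only on nonnegativity of $g$ and the trivial membership $\{x,y\} \subseteq S\{x,y\}$. The genuine difficulty sits in the converse directions of (a) and (b)---that attaining $1$ forces $G$ to be a path, and that attaining $\tfrac{n}{2}$ forces the existence of the pairing bijection---and I would defer these to the cited references rather than reprove them here.
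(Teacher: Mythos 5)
Your proposal is correct. Note that the paper itself supplies no proof of this theorem: it is stated as a recalled known result, with both the bounds and the characterizations (a) and (b) imported wholesale from \cite{sdimF} and \cite{fracsdim}. Your elementary verification of the two bounds (nonnegativity of $g$ for the lower bound; the constant function $g_0 \equiv \tfrac12$ together with the membership $\{x,y\} \subseteq S\{x,y\}$ for the upper bound) and your summing argument for the ``if'' direction of (b) are all sound, and deferring the converse directions to the cited references is exactly the level of rigor the paper itself adopts.
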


\begin{theorem}\emph{\cite{sdimF}}\label{sdimFthm}
\begin{itemize}
\item[\emph{(a)}] For any tree $T$, $sdim_f(T)=\frac{1}{2}\sigma(T)$.
\item[\emph{(b)}] For the Petersen graph $\mathcal{P}$, $sdim_f(\mathcal{P})=5$.
\item[\emph{(c)}] For the cycle $C_n$ on $n \ge 3$ vertices, $sdim_f(C_n)=\frac{n}{2}$.
\item[\emph{(d)}] For the wheel $W_{n}$ on $n \ge 4$ vertices, we have
\begin{equation}\nonumber
sdim_f(W_{n})=\left\{
\begin{array}{ll}
2 & \mbox{ if } n=4\\
\frac{1}{2}(n-1) & \mbox{ if } n \ge 5.
\end{array} \right.
\end{equation}
\item[\emph{(e)}] For $k \ge 2$, let $K_{a_1, a_2, \ldots, a_k}$ be a complete $k$-partite graph of order 
    $n=\sum_{i=1}^{k}a_i$. Then
\begin{equation}\nonumber
sdim_f(K_{a_1, a_2, \ldots, a_k})=\left\{
\begin{array}{ll}
\frac{n-1}{2} & \mbox{ if $a_i=1$ for exactly one } i \in \{1,2, \ldots, k\}\\
\frac{n}{2} & \mbox{ otherwise }.
\end{array}\right.
\end{equation}
\item[\emph{(f)}] For $s, t \ge 2$, $sdim_f(P_s \square P_t)=2$, where $P_s \square P_t$ denotes the Cartesian product of 
    two paths $P_s$ and $P_t$.
\end{itemize}
\end{theorem}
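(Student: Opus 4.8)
My plan is to route every part of the theorem through the strong resolving graph $G_{SR}$ and a fractional vertex cover / fractional matching computation. The starting point is Observation~\ref{observation-1}(b): for each edge $xy$ of $G_{SR}$ (i.e.\ each MMD pair) a strong resolving function $g$ must satisfy $g(x)+g(y)\ge 1$, and a minimizing $g$ may be taken to vanish off $M(G)=V(G_{SR})$. I would first record the \emph{extension principle}: for any distinct $u,v$ there is an MMD pair $\hat u,\hat v$ with $\hat u,\hat v\in S\{u,v\}$, obtained by walking from $u$ to a vertex $\hat u$ maximally distant from $v$ (keeping $u$ on a $v$--$\hat u$ geodesic) and then from $v$ to a vertex $\hat v$ maximally distant from $\hat u$; a short distance computation then shows $v$ lies on a $u$--$\hat v$ geodesic, so both $\hat u,\hat v\in S\{u,v\}$. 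Since $\{\hat u,\hat v\}\subseteq S\{u,v\}$, any $g$ meeting all MMD constraints automatically satisfies $g(S\{u,v\})\ge g(\hat u)+g(\hat v)\ge 1$. Hence $sdim_f(G)$ equals the fractional vertex cover number of $G_{SR}$, which by LP duality equals the fractional matching number $\nu_f(G_{SR})$. Each part then reduces to two tasks: (i) determine $M(G)$ and the edges of $G_{SR}$, and (ii) read off $\nu_f(G_{SR})$, where lower bounds come from summing Observation~\ref{observation-1}(b) over a fractional matching and upper bounds from an explicit function, typically $g\equiv\tfrac12$ on $M(G)$.

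For the tree in (a), an internal vertex always has a neighbour farther from any target, so $M(T)$ is exactly the set of leaves and every two leaves are MMD; thus $G_{SR}=K_{\sigma(T)}$ with $\nu_f=\tfrac12\sigma(T)$, and $g\equiv\tfrac12$ on the leaves is feasible because in a tree every $S\{x,y\}$ contains at least two leaves. For the cycle in (c), each vertex is MMD with its (near-)antipode, so $(C_n)_{SR}$ is a perfect matching when $n$ is even and an odd cycle when $n$ is odd, both with $\nu_f=\tfrac n2$; equivalently the derangement $v\mapsto v+\lfloor n/2\rfloor$ furnishes the bijection required by Theorem~\ref{sdimbounds}(b). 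For the Petersen graph in (b), girth $5$ and diameter $2$ force two vertices to be MMD exactly when they are at distance $2$, so $\mathcal P_{SR}$ is the $6$-regular distance-$2$ graph on all ten vertices; it has a perfect matching, and Theorem~\ref{sdimbounds}(b) yields $sdim_f(\mathcal P)=5$.

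For the wheel in (d) with $n\ge 5$, the hub is never maximally distant (some rim vertex lies at distance $2$ from any rim target), so it is excluded from $M(W_n)$ and may be given weight $0$; the rim vertices at rim-distance $\ge 2$ are mutually MMD, giving $(W_n)_{SR}=\overline{C_{n-1}}$, which is vertex-transitive with $\nu_f=\tfrac12(n-1)$, while $n=4$ is the complete graph $K_4$ with $sdim_f=2$. For the complete multipartite graph in (e), two vertices in the same part (at distance $2$) are always MMD, whereas two vertices in different parts are MMD iff \emph{both} their parts are singletons; consequently $G_{SR}$ is a disjoint union of cliques. When exactly one part is a singleton its vertex is MMD with nobody and drops out of $M(G)$, leaving cliques covering $n-1$ vertices and giving $\tfrac{n-1}2$; with no singleton, or with at least two singletons (which then form their own clique), every vertex lies in a clique of $G_{SR}$ and the value is $\tfrac n2$. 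Finally, for the grid in (f) only the four corners are maximally distant and the MMD pairs are the two diagonals, so $(P_s\square P_t)_{SR}=2K_2$ and $\nu_f=2$.

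The main obstacle is step (i): correctly pinning down $M(G)$ and every MMD pair in each family, in particular verifying that internal, non-corner, and non-singleton vertices are excluded and getting the three-way multipartite split right. Once $G_{SR}$ is identified as a clique, a disjoint union of cliques, a matching, an odd cycle, or a vertex-transitive graph, the value $\nu_f(G_{SR})$ is immediate; the only remaining care is to confirm that the proposed $\tfrac12$-valued functions satisfy $g(S\{x,y\})\ge 1$ for \emph{all} pairs and not merely the MMD pairs, which is exactly what the extension principle above guarantees.
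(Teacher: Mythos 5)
Your proposal is correct, but note that the paper itself never proves Theorem~\ref{sdimFthm}: it is quoted verbatim from~\cite{sdimF}, where the six parts are handled by separate, largely ad hoc arguments. So the comparison is really with the toolkit this paper builds in Section~\ref{sect-general}. Your unified route rests on the identity $sdim_f(G)=\alpha_f(G_{SR})=\nu_f(G_{SR})$ (fractional vertex cover equals fractional matching by LP duality), which is genuine: restricting any strong resolving function to $M(G)$ gives a fractional vertex cover of $G_{SR}$ by Observation~\ref{observation-1}(b), and conversely your extension principle shows that a fractional vertex cover of $G_{SR}$, extended by zero, is a strong resolving function. Your identifications of the strong resolving graphs ($K_{\sigma(T)}$ for trees; the $6$-regular complement of the Petersen graph; a perfect matching or an odd cycle for $C_n$; $\overline{C_{n-1}}$ with the hub excluded for $W_n$, $n\ge5$; disjoint cliques, with a lone singleton part dropping out of $M(G)$, for complete multipartite graphs; $2K_2$ for the grid) are all correct, and the values follow. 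The one place you should tighten the argument is the extension principle itself: your construction makes $\hat v$ maximally distant from $\hat u$ by fiat, but the reverse direction is only asserted. The missing half is short: if a neighbour $w$ of $\hat u$ satisfied $d(w,\hat v)>d(\hat u,\hat v)=d(\hat u,v)+d(v,\hat v)$, then $d(w,v)\ge d(w,\hat v)-d(v,\hat v)>d(\hat u,v)$, contradicting that $\hat u$ is maximally distant from $v$. This verification is exactly the core of the proof of Theorem~\ref{v_cover} in~\cite{sdim} and of Proposition~\ref{uppersdimF} in~\cite{fracsdim}, so the claim is true, but it must be written out since everything else hangs on it.

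It is also worth observing that you do not need the full LP identity for this theorem: in every one of the six cases the strong resolving graph you identify has all components regular, so the paper's own Proposition~\ref{v_regular} (together with Theorem~\ref{v_transitive} for the Petersen graph and cycles) already yields the stated values once $G_{SR}$ is determined. What your stronger route buys is generality -- $sdim_f(G)=\nu_f(G_{SR})$ computes the parameter for arbitrary graphs, including those like $G_q\in\mathcal{F}$ of Remark~\ref{graph_G_q} whose strong resolving graphs are far from regular -- at the cost of having to prove the extension principle rather than citing Proposition~\ref{uppersdimF}.
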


Given a minimum strong resolving set $S$ of a graph $G$, it is clear that a function on $V(G)$ which assigns 1 to each vertex of 
$S$ and 0 to the rest of the vertices of $G$ is a strong resolving function. Thus, the following obvious observation follows.

\begin{observation}\emph{\cite{sdimF}}\label{observation}
For any connected graph $G$, $sdim_f(G) \le sdim(G)$.
\end{observation}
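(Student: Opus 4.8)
The plan is to exhibit a single strong resolving function whose total weight equals $sdim(G)$, as foreshadowed in the sentence preceding the statement. First I would fix a minimum strong resolving set $S \subseteq V(G)$, so that $|S| = sdim(G)$, and define the indicator function $g: V(G) \rightarrow [0,1]$ by $g(v) = 1$ if $v \in S$ and $g(v) = 0$ otherwise. Note that $g$ is admissible in the minimization defining $sdim_f(G)$, since it is $\{0,1\}$-valued and hence lands in the required codomain $[0,1]$. Moreover $g(V(G)) = \sum_{v \in V(G)} g(v) = |S| = sdim(G)$, so once I verify that $g$ is a strong resolving function, the bound $sdim_f(G) \le g(V(G)) = sdim(G)$ follows immediately from the fact that $sdim_f(G)$ is the minimum of $g'(V(G))$ over all strong resolving functions $g'$.

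The key step is to check the constraint $g(S\{x,y\}) \ge 1$ for every pair of distinct vertices $x, y \in V(G)$. Here I would lean on the definitional coincidence between the two notions at play: a vertex $z$ \emph{strongly resolves} the pair $x,y$ exactly when there is a shortest $y-z$ path through $x$ or a shortest $x-z$ path through $y$, which is precisely the condition defining membership in $S\{x,y\}$. Thus a vertex strongly resolves $x,y$ if and only if it belongs to $S\{x,y\}$. Since $S$ is a strong resolving set, some $z \in S$ strongly resolves $x,y$, and therefore this $z$ lies in $S \cap S\{x,y\}$.

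Finally, because $g$ is nonnegative and $g(z) = 1$ for this $z \in S\{x,y\}$, I conclude $g(S\{x,y\}) = \sum_{v \in S\{x,y\}} g(v) \ge g(z) = 1$, which establishes that $g$ is indeed a strong resolving function and completes the argument. There is essentially no obstacle beyond recognizing the definitional equivalence just noted; the only point warranting a moment's care is making explicit that ``$z$ strongly resolves $x,y$'' and ``$z \in S\{x,y\}$'' are literally the same condition, since the two terms are introduced with separate wording in the preliminaries even though they describe identical geodesic configurations.
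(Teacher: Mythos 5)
Your proof is correct and follows exactly the paper's approach: the paper justifies this observation by the same remark, namely that the indicator function of a minimum strong resolving set $S$ is itself a strong resolving function of weight $|S|=sdim(G)$. Your only addition is to spell out the definitional identity between ``$z$ strongly resolves $x,y$'' and ``$z\in S\{x,y\}$,'' which the paper leaves implicit as obvious.
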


Next, we recall a lower bound for $sdim_f(G)$ in terms of $sdim(G)$.

\begin{theorem}\emph{\cite{fracsdim}}\label{lowersdimF}
For any connected graph $G$, $sdim_f(G) \ge \max\left\{\frac{1}{2} sdim(G),1\right\}$.
\end{theorem}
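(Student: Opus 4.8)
The plan is to establish the two lower bounds $sdim_f(G)\ge 1$ and $sdim_f(G)\ge \frac{1}{2}sdim(G)$ separately, since $\max\{\frac{1}{2} sdim(G),1\}$ is simply whichever of the two is larger. The first bound is immediate: as $G$ is connected with at least two vertices, fix any two distinct vertices $x,y$; then $S\{x,y\}\subseteq V(G)$, and since any strong resolving function $g$ is nonnegative, $g(V(G))\ge g(S\{x,y\})\ge 1$. Taking the minimum over all strong resolving functions gives $sdim_f(G)\ge 1$ (this also follows directly from Theorem~\ref{sdimbounds}).

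For the second bound I would use a threshold-rounding argument that converts a minimum strong resolving function into a strong resolving set while losing at most a factor of $2$. Let $g$ be a minimum strong resolving function of $G$, so $g(V(G))=sdim_f(G)$, and set
\[
S=\{v\in V(G): g(v)\ge \tfrac{1}{2}\}.
\]
First I would argue that $S$ is a strong resolving set. Recall from~\cite{sdim} that a set $W\subseteq V(G)$ strongly resolves $G$ precisely when $W$ contains at least one of $x,y$ for every pair $x,y$ that is MMD in $G$ (equivalently, $W$ is a vertex cover of $G_{SR}$); the excerpt records the necessity of this condition, and \cite{sdim} supplies its sufficiency. So it suffices to check that $S$ meets every MMD pair. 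If $x$ MMD $y$, then by Observation~\ref{observation-1}(b) we have $g(x)+g(y)\ge 1$, whence $\max\{g(x),g(y)\}\ge\frac{1}{2}$ and at least one of $x,y$ lies in $S$. Hence $S$ strongly resolves $G$, and in particular $sdim(G)\le |S|$.

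The final step bounds $|S|$ in terms of $g(V(G))$. For each $v\in S$ we have $2g(v)\ge 1$, so
\[
|S|=\sum_{v\in S}1\le \sum_{v\in S}2g(v)\le 2\sum_{v\in V(G)}g(v)=2\,sdim_f(G),
\]
using nonnegativity of $g$ in the middle inequality. Combining this with $sdim(G)\le |S|$ yields $sdim(G)\le 2\,sdim_f(G)$, that is, $sdim_f(G)\ge \frac{1}{2} sdim(G)$, which together with $sdim_f(G)\ge 1$ completes the proof.

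The only nonroutine point is the claim that the rounded set $S$ actually strongly resolves $G$; everything else is a direct estimate. Since the excerpt states only the necessary half of the MMD/vertex-cover characterization, the real leverage comes from the sufficient half proved in~\cite{sdim}: once that is in hand, the threshold at $\frac{1}{2}$ is forced by the constraint $g(x)+g(y)\ge 1$ on MMD pairs, and the factor of $2$ in the bound is exactly the rounding loss.
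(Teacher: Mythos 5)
Your proof is correct, but note that the paper does not actually prove this theorem---it is imported from~\cite{fracsdim} as a known result---so there is no internal proof to compare against; your argument has to stand on its own, and it does. The bound $sdim_f(G)\ge 1$ is immediate (and already contained in Theorem~\ref{sdimbounds}), and the threshold-rounding argument for $sdim_f(G)\ge \frac{1}{2}sdim(G)$ is sound. Two remarks on how it interacts with the paper's machinery. First, you can avoid invoking the sufficiency half of the vertex-cover characterization from~\cite{sdim} (that every vertex cover of $G_{SR}$ is a strong resolving set), which you correctly identify as the one ingredient not recorded in the paper: once you know that $S=\{v\in V(G): g(v)\ge \frac{1}{2}\}$ meets every MMD pair, $S$ is by definition a vertex cover of $G_{SR}$, so $|S|\ge \alpha(G_{SR})=sdim(G)$ follows directly from the numerical equality in Theorem~\ref{v_cover}; combined with your estimate $|S|\le 2g(V(G))=2\,sdim_f(G)$, this closes the proof using only statements displayed in the paper. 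Second, the paper's own toolkit yields an alternative derivation: by Proposition~\ref{matchingN}, $sdim_f(G)\ge \nu(G_{SR})$, and since the endpoints of a maximum matching of $G_{SR}$ form a vertex cover, $\alpha(G_{SR})\le 2\nu(G_{SR})$, hence $sdim_f(G)\ge \frac{1}{2}\alpha(G_{SR})=\frac{1}{2}sdim(G)$ by Theorem~\ref{v_cover}; both routes are two faces of the same half-integrality phenomenon, yours rounding a fractional cover, the other passing through matchings. The only caveat---shared by the statement as printed---is that $sdim_f(G)\ge 1$ requires $|V(G)|\ge 2$, which you flag appropriately.
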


Next, we recall some results involving the vertex cover number $\alpha(G)$ of a graph $G$. Based on the strong resolving graph $G_{SR}$ defined in \cite{sdim}, where $V(G_{SR})=V(G)$ and $uv \in E(G_{SR})$ if and only if $u$ MMD $v$ in $G$, Oellermann and Peters-Fransen proved the following crucial relationship between the strong metric dimension of a graph $G$ and the vertex cover number of $G_{SR}$.

\begin{theorem}\emph{\cite{sdim}}\label{v_cover}
For any connected graph $G$, $sdim(G)=\alpha(G_{SR})$.
\end{theorem}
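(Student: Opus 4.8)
The plan is to prove the equality by establishing the two inequalities $sdim(G) \ge \alpha(G_{SR})$ and $sdim(G) \le \alpha(G_{SR})$, showing in effect that the strong resolving sets of $G$ are precisely the vertex covers of $G_{SR}$. The inequality $sdim(G) \ge \alpha(G_{SR})$ is immediate from the fact recalled above from \cite{sdim}: whenever $u$ MMD $v$, every strong resolving set contains $u$ or $v$. Since the edges of $G_{SR}$ are exactly the MMD pairs, any strong resolving set $S$ of $G$ meets every edge of $G_{SR}$ and so is a vertex cover of $G_{SR}$; taking $S$ to be minimum gives $sdim(G)=|S| \ge \alpha(G_{SR})$.

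The substantive direction is $sdim(G) \le \alpha(G_{SR})$, for which I would prove that every vertex cover $S$ of $G_{SR}$ is a strong resolving set of $G$. Fix distinct $x, y \in V(G)$; the goal is to exhibit a vertex of $S$ that strongly resolves this pair. The key is an extension argument: starting at $x$ and repeatedly moving to a neighbor strictly farther from $y$, I obtain a vertex $u$ that is maximally distant from $y$ with $x$ on some shortest $y-u$ geodesic (the process terminates since distances are bounded by $diam(G)$, and the invariant ``$x$ on a shortest $y$–current geodesic'' is preserved at each step). Applying the same procedure from $y$ relative to $u$ yields a vertex $v$ maximally distant from $u$ with $y$ on a shortest $u-v$ geodesic. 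Chaining the two distance equalities $d(y,u)=d(y,x)+d(x,u)$ and $d(u,v)=d(u,y)+d(y,v)$ places $u,x,y,v$ consecutively on a single shortest $u-v$ path; in particular $x$ lies on a shortest $y-u$ geodesic, so $u$ strongly resolves $\{x,y\}$, and $y$ lies on a shortest $x-v$ geodesic, so $v$ strongly resolves $\{x,y\}$.

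It then remains to verify that $u$ and $v$ are genuinely MMD, i.e. that $uv \in E(G_{SR})$. Only ``$v$ maximally distant from $u$'' is built into the construction, so the content is to show that $u$ is maximally distant from $v$. Here I would combine the facts that $u$ is maximally distant from $y$ and that $d(u,v)=d(u,y)+d(y,v)$: for any neighbor $w$ of $u$ we have $d(w,v) \le d(w,y)+d(y,v) \le d(u,y)+d(y,v) = d(u,v)$, which gives the claim. Once $uv \in E(G_{SR})$, the vertex-cover property of $S$ forces $u \in S$ or $v \in S$, and by the previous paragraph that vertex strongly resolves $\{x,y\}$. Hence $S$ strongly resolves every pair, so $S$ is a strong resolving set of $G$; applying this to a minimum vertex cover of $G_{SR}$ produces a strong resolving set of size $\alpha(G_{SR})$, whence $sdim(G) \le \alpha(G_{SR})$.

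I expect the main obstacle to be the maximal-distance extension and stability reasoning of the last two paragraphs, namely guaranteeing simultaneously that the constructed pair $(u,v)$ is mutually maximally distant and that $x$ and $y$ both sit on a common shortest $u-v$ geodesic in the correct order, rather than the essentially routine covering argument. The delicate point is confirming that maximal distance from $y$ is inherited as maximal distance from $v$ once $y$ is placed on a shortest $u-v$ path, which is exactly where the triangle-inequality estimate above does the work.
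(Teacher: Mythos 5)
Your proof is correct. Note that the paper itself gives no proof of this statement --- it is quoted from Oellermann and Peters-Fransen \cite{sdim} --- and your two-inequality argument (strong resolving sets of $G$ are exactly the vertex covers of $G_{SR}$, with the covering direction handled by the maximal-distance extension construction, the chaining of the two geodesic equalities, and the triangle-inequality check that the constructed pair $u,v$ is genuinely mutually maximally distant) is essentially the original argument of that cited reference.
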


For the case in which the strong resolving graph of a graph is bipartite, the following well known result plays a very 
important role.

\begin{theorem}\emph{\cite{E,K}}\label{KE} $($K\"{o}nig-Egerv\'{a}ry$)$
If $G$ is a bipartite graph, then $\alpha(G)=\nu(G)$.
\end{theorem}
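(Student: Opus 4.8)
The plan is to establish the two inequalities $\alpha(G) \ge \nu(G)$ and $\alpha(G) \le \nu(G)$ separately; together they give the claimed equality. The first inequality in fact holds for \emph{every} graph. Indeed, fix a maximum matching $M$ of $G$, so $|M| = \nu(G)$. Since the edges of $M$ are pairwise vertex-disjoint, any vertex cover $S$ of $G$ must contain at least one endpoint of each edge of $M$, and these endpoints are distinct across distinct edges of $M$; hence $|S| \ge |M| = \nu(G)$, and taking the minimum over all vertex covers yields $\alpha(G) \ge \nu(G)$.

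The content of the theorem lies in the reverse inequality $\alpha(G) \le \nu(G)$, and this is where bipartiteness is essential. First I would fix a bipartition $V(G) = A \cup B$ and a maximum matching $M$, with the goal of exhibiting a vertex cover of size exactly $|M|$. Following the standard alternating-path construction, let $A_0 \subseteq A$ be the set of $M$-unsaturated vertices of $A$, and let $L$ be the set of all vertices of $G$ reachable from $A_0$ by an $M$-alternating path (one whose edges alternate between non-matching and matching edges, starting with a non-matching edge). I would then take the candidate vertex cover to be
\[
S = (A \setminus L) \cup (B \cap L).
\]

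The verification splits into two claims. The first is that $S$ is a vertex cover: given any edge $ab$ with $a \in A$ and $b \in B$, I would assume for contradiction that $a \in A \cap L$ while $b \in B \setminus L$, and show that the alternating path witnessing $a \in L$ can be extended by one more edge to reach $b$ — whether or not $ab \in M$ — forcing $b \in L$, a contradiction. The second claim is that $|S| \le |M|$: every vertex of $S$ is $M$-saturated (the only unsaturated vertices of $A$ lie in $A_0 \subseteq L$, and an unsaturated vertex of $B \cap L$ would yield an $M$-augmenting path, contradicting the maximality of $M$), and no edge of $M$ has both endpoints in $S$ (if $ab \in M$ had $a \in A \setminus L$ and $b \in B \cap L$, the matching edge $ab$ would extend the path to $a$, contradicting $a \notin L$). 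Hence each edge of $M$ contributes at most one vertex to $S$, so $|S| \le |M|$. Combining, $\alpha(G) \le |S| \le |M| = \nu(G)$, which closes the argument.

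The main obstacle is the reverse inequality, and within it the crucial step is the use of the maximality of $M$ — equivalently, the absence of $M$-augmenting paths — to guarantee simultaneously that $S$ covers every edge and that $S$ is no larger than $M$. Bipartiteness enters precisely here: it ensures that an alternating walk out of $A_0$ behaves like a genuine path between the two sides, so that an edge with both endpoints outside $S$ (or a matching edge with both endpoints inside $S$) cannot occur. As alternatives, one could derive the theorem from the max-flow min-cut theorem, by modeling a maximum matching as a maximum integral flow in the natural source-sink orientation of $G$ and reading off a minimum vertex cover from a minimum cut, or from linear-programming duality together with the total unimodularity of the incidence matrix of a bipartite graph; I would regard the alternating-path argument above as the most self-contained route.
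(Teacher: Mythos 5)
The paper does not prove this statement at all: it is the classical K\"{o}nig--Egerv\'{a}ry theorem, imported as a black box from the cited 1931 papers of Egerv\'{a}ry~\cite{E} and K\"{o}nig~\cite{K}, and used only to derive Corollary~\ref{cor_bipartite}. So there is no in-paper argument to compare yours against; what can be assessed is your proof on its own terms, and it is correct. The easy direction $\alpha(G)\ge\nu(G)$ is right (and, as you note, needs no bipartiteness). Your reverse direction is the standard alternating-path proof: with $A_0$ the unsaturated vertices of $A$, $L$ the set of vertices reachable from $A_0$ by $M$-alternating paths, and $S=(A\setminus L)\cup(B\cap L)$, both key verifications are sound --- that $S$ covers every edge, and that every vertex of $S$ is $M$-saturated while no edge of $M$ contributes two vertices to $S$, the latter two facts using exactly the maximality of $M$ (no augmenting path) and the parity structure that bipartiteness forces on alternating paths. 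The only slight looseness is in the cover claim in the case $ab\in M$: there the path witnessing $a\in L$ must already pass through $b$ (its final edge into $a$ can only be the matching edge $ab$), so $b\in L$ holds because $b$ lies on the path, not because the path is ``extended'' by one more edge; the conclusion is unaffected. Your closing remarks on alternative derivations (max-flow min-cut, LP duality with total unimodularity) are also accurate, and the alternating-path route you chose is indeed the most self-contained.
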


In connection with the matching number $\nu(G)$ of a graph $G$, the following lower bound for the fractional strong 
metric dimension of graphs can be quite useful.

\begin{proposition}\label{matchingN}
For any connected graph $G$, $sdim_f(G) \ge \nu(G_{SR})$.
\end{proposition}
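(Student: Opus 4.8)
The plan is to fix a minimum strong resolving function $g$ of $G$ and to extract from a maximum matching of $G_{SR}$ a collection of vertex-disjoint mutually maximally distant pairs, each of which forces a unit of $g$-weight by Observation~\ref{observation-1}(b). First I would let $M = \{u_1v_1, u_2v_2, \ldots, u_kv_k\}$ be a maximum matching of $G_{SR}$, so that $k = \nu(G_{SR})$. By the definition of the strong resolving graph, each edge $u_iv_i \in M$ records that $u_i$ MMD $v_i$ in $G$.

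The central point is that, because $M$ is a matching, the $2k$ endpoints $u_1, v_1, \ldots, u_k, v_k$ are pairwise distinct vertices of $G$. For each edge $u_iv_i$, Observation~\ref{observation-1}(b) gives $S\{u_i, v_i\} = \{u_i, v_i\}$ and hence $g(u_i) + g(v_i) \ge 1$. Summing these $k$ inequalities over the edges of the matching yields $\sum_{i=1}^{k}\bigl(g(u_i)+g(v_i)\bigr) \ge k = \nu(G_{SR})$.

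It then remains to relate this sum back to $g(V(G))$. Since $g$ is nonnegative on $V(G)$ and the endpoints of $M$ are distinct, the quantity $\sum_{i=1}^{k}\bigl(g(u_i)+g(v_i)\bigr)$ is simply the $g$-weight of a subset of $V(G)$, so it is bounded above by $g(V(G))$. Combining the two displays gives
\[
sdim_f(G) = g(V(G)) \ge \sum_{i=1}^{k}\bigl(g(u_i)+g(v_i)\bigr) \ge \nu(G_{SR}),
\]
which is the desired inequality.

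I do not anticipate a serious obstacle: the argument is essentially a weighted counting bound. The only step requiring care is invoking the matching property to ensure the endpoints are distinct, so that no vertex's $g$-value is double-counted when passing from the per-edge inequalities $g(u_i)+g(v_i)\ge 1$ to the global bound $g(V(G))$. If the endpoints were allowed to coincide, the summation step would fail, which is precisely why the bound is stated in terms of the matching number $\nu(G_{SR})$ rather than, say, the number of MMD pairs.
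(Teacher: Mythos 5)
Your proof is correct and follows essentially the same route as the paper's: take a maximum matching of $G_{SR}$, apply Observation~\ref{observation-1}(b) to each matched MMD pair to get $g(u_i)+g(v_i)\ge 1$, and sum, using the disjointness of the matching edges to avoid double-counting. The only cosmetic difference is that you fix a \emph{minimum} strong resolving function while the paper works with an arbitrary one and takes the minimum at the end; both are valid.
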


\begin{proof}
Let $g: V(G) \rightarrow [0,1]$ be a strong resolving function of $G$. Let $\mathcal{M}=\{u_iv_i \in E(G_{SR}): 1 \le i \le 
m\}$ be a maximum matching of $G_{SR}$.

For each $i \in \{1,2,\ldots, m\}$, $u_iv_i \in \mathcal{M}$ implies that $u_i$ MMD $v_i$ in $G$, and thus $g(u_i)+g(v_i) \ge 
1$. By summing over $m$ such inequalities, we have $\sum_{i=1}^{m} [g(u_i)+g(v_i)] \ge m$. Since any two vertices in $\{u_i, 
v_i : 1 \le i \le m\}$ are distinct, we have $sdim_f(G) \ge m=\nu(G_{SR})$.~\hfill
\end{proof}

As an immediate consequence of Theorem~\ref{KE} and Proposition~\ref{matchingN}, we have the following result.

\begin{cor}\label{cor_bipartite}
Let $G$ is a connected graph of order at least two. If $G_{SR}$ is a bipartite graph, then $sdim_f(G)=sdim(G)$.
\end{cor}

\begin{proof}
Let $G_{SR}$ be a bipartite graph. By Theorem~\ref{v_cover}, Theorem~\ref{KE}, and Proposition~\ref{matchingN}, $sdim_f(G) 
\ge \nu(G_{SR})=\alpha(G_{SR})=sdim(G)$. Since $sdim_f(G) \le sdim(G)$ by Observation~\ref{observation}, we have 
$sdim_f(G)=sdim(G)$.
  \end{proof}

Corollary~\ref{cor_bipartite} is applicable to a number of classes of graphs, including $P_n$, $C_{2k}$, and the hypercube $Q_n$, whose strong resolving graphs are respectively $P_2$, $\bigcup_{i=1}^{k}P_2$, and $\bigcup_{i=1}^{2^{n-1}}P_2$, as one may readily check. For several other interesting constructions of strong resolving graphs, we suggest the recent survey~\cite{survey}.\\

The set $M(G)=\{x \in V(G): \exists y \in V(G) \mbox{ with } x \mbox{ MMD } y\}$ has been called the set of \emph{boundary 
vertices} of $G$, and we recall the following result.

\begin{proposition}\emph{\cite{fracsdim}}\label{uppersdimF}
For any connected graph $G$, $sdim_f(G) \le \frac{1}{2}|M(G)|$.
\end{proposition}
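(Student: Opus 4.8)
The plan is to exhibit an explicit strong resolving function whose total weight equals $\frac{1}{2}|M(G)|$. The natural candidate is the function $g$ that assigns $\frac{1}{2}$ to every boundary vertex (i.e. to every vertex of $M(G)$) and $0$ to every other vertex; this automatically satisfies $g(V(G))=\frac{1}{2}|M(G)|$, so it suffices to verify that $g$ is a strong resolving function. Because $g$ is supported on $M(G)$ with constant value $\frac{1}{2}$ there, checking $g(S\{x,y\})\ge 1$ for a pair of distinct vertices $x,y$ reduces to showing that $S\{x,y\}$ contains at least two distinct boundary vertices.

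To produce these two vertices I would fix distinct $x,y$ and run a ``push-out along geodesics'' construction. First, among all vertices $z$ with $d(y,z)=d(y,x)+d(x,z)$ (equivalently, $x$ lies on a $y$--$z$ geodesic), choose $a$ maximizing $d(x,a)$. A short triangle-inequality argument shows that this choice forces $a$ to be maximally distant from $y$, and since $x$ lies on a $y$--$a$ geodesic we immediately get $a\in S\{x,y\}$. Second, among all vertices $z$ with $d(a,z)=d(a,y)+d(y,z)$ (i.e. $y$ lies on an $a$--$z$ geodesic), choose $b$ maximizing $d(y,b)$; the same extremal argument shows $b$ is maximally distant from $a$.

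The heart of the argument, and the step I expect to be the main obstacle, is upgrading these one-sided ``maximally distant'' relations into a genuinely mutual one, so that $a$ and $b$ actually lie in $M(G)$. The key observation will be that maximal distance is preserved under pushing out: since $a$ is maximally distant from $y$ and $y$ lies on an $a$--$b$ geodesic, for any neighbor $t$ of $a$ we have $d(t,b)\le d(t,y)+d(y,b)\le d(a,y)+d(y,b)=d(a,b)$, so $a$ is maximally distant from $b$ as well. Combined with the construction of $b$, this yields $a$ MMD $b$, whence both $a,b\in M(G)$.

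It will then remain to confirm that $b\in S\{x,y\}$ and that $a\neq b$. For the membership, combining $d(a,b)=d(a,y)+d(y,b)$ with $d(a,y)=d(a,x)+d(x,y)$ (which holds because $x$ lies on a $y$--$a$ geodesic) and the triangle inequality forces $d(x,b)=d(x,y)+d(y,b)$, i.e. $y$ lies on an $x$--$b$ geodesic, so $b\in S\{x,y\}$. For distinctness, $d(a,b)=d(a,y)+d(y,b)\ge d(a,y)\ge d(x,y)\ge 1>0$ gives $a\neq b$. Thus $S\{x,y\}$ contains the two distinct boundary vertices $a$ and $b$, so $g(S\{x,y\})\ge \frac{1}{2}+\frac{1}{2}=1$; as $x,y$ were arbitrary, $g$ is a strong resolving function and $sdim_f(G)\le g(V(G))=\frac{1}{2}|M(G)|$.
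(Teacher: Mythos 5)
Your proof is correct. Note that the paper itself does not prove this proposition: it is recalled verbatim from \cite{fracsdim}, so there is no in-paper argument to compare against. Your ``push-out along geodesics'' construction --- extending beyond $x$ away from $y$ to obtain $a$, then beyond $y$ away from $a$ to obtain $b$, and upgrading the two one-sided maximal-distance relations to $a$ MMD $b$ --- is exactly the standard argument behind this bound; it establishes the stronger fact that every set $S\{x,y\}$ contains a mutually maximally distant pair of vertices, which is what makes the weight-$\frac{1}{2}$ function on $M(G)$ a strong resolving function. Each of your individual steps (the extremal choice forcing $a$ to be maximally distant from $y$, then $b$ from $a$, the transfer giving $a$ maximally distant from $b$, the verification that $y$ lies on an $x$--$b$ geodesic so $b\in S\{x,y\}$, and the distinctness $a\neq b$) checks out.
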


For a vertex transitive graph $G$, it is clear that $M(G)=V(G)$; in fact, equality in the bound of Proposition~\ref{uppersdimF} is always attained for it.

\begin{theorem}\emph{\cite{fracsdim}}\label{v_transitive}
If $G$ is a vertex-transitive graph, then $sdim_f(G)=\frac{|V(G)|}{2}$.
\end{theorem}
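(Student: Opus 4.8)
The plan is to establish the two inequalities $sdim_f(G) \le \frac{|V(G)|}{2}$ and $sdim_f(G) \ge \frac{|V(G)|}{2}$ separately. The upper bound is essentially free: since $G$ is vertex-transitive I first record that $M(G) = V(G)$, so Proposition~\ref{uppersdimF} gives $sdim_f(G) \le \frac{1}{2}|M(G)| = \frac{|V(G)|}{2}$. To see $M(G) = V(G)$, I would observe that a pair $u,v$ realizing $diam(G)$ is automatically mutually maximally distant (no neighbor of $u$ can be farther from $v$ than $u$ is, and symmetrically), so $M(G) \neq \emptyset$; since every automorphism of $G$ preserves distances, and hence the MMD relation, $M(G)$ is invariant under $\mathrm{Aut}(G)$, and transitivity forces $M(G) = V(G)$.

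For the lower bound, the main idea is a symmetrization (averaging) argument over the automorphism group $\Gamma = \mathrm{Aut}(G)$. The key observation is that for any $\phi \in \Gamma$ and any distinct vertices $x, y$ one has $\phi(S\{x,y\}) = S\{\phi(x), \phi(y)\}$, because $\phi$ preserves all distances and therefore all geodesics. Consequently, if $g$ is any strong resolving function then so is $g \circ \phi$ for each $\phi$, and so is the average $\bar{g} = \frac{1}{|\Gamma|}\sum_{\phi \in \Gamma} g \circ \phi$, since the set of strong resolving functions is convex, being cut out by the linear inequalities $g(S\{x,y\}) \ge 1$ together with the box constraints $0 \le g \le 1$. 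A direct computation moreover shows $\bar{g}(V(G)) = g(V(G))$, so averaging does not change the total weight.

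Taking $g$ to be a minimum strong resolving function, I would then note that $\bar g$ is again a minimum strong resolving function and that $\bar g$ is constant on each orbit of $\Gamma$; since $G$ is vertex-transitive there is a single orbit, so $\bar{g} \equiv c$ for some constant $c \in [0,1]$. Finally, fix any vertex $v$; by $M(G) = V(G)$ there is a vertex $w$ with $v$ MMD $w$, and Observation~\ref{observation-1}(b) yields $\bar g(v) + \bar g(w) \ge 1$, i.e., $2c \ge 1$. Hence $sdim_f(G) = \bar g(V(G)) = c\,|V(G)| \ge \frac{|V(G)|}{2}$, which together with the upper bound completes the proof.

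I expect the main obstacle to be the verification that $\bar g$ remains a legitimate strong resolving function, namely carefully checking both the automorphism-invariance identity $\phi(S\{x,y\}) = S\{\phi(x),\phi(y)\}$ and the convexity of the constraint set; everything after that (constancy on the single orbit and the final $2c \ge 1$ constraint) is routine. I would also remark that the alternative route through Proposition~\ref{matchingN}, which would require $\nu(G_{SR}) = \frac{|V(G)|}{2}$, only succeeds when $|V(G)|$ is even, whereas the averaging argument treats even and odd order uniformly, as is needed for examples such as $C_5$.
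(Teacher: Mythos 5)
Your proof is correct, but note that the paper itself offers no proof of this statement: it is quoted from~\cite{fracsdim}, so the only meaningful comparison is with the machinery the paper does develop. Your symmetrization argument is sound at every step --- the identity $\phi(S\{x,y\})=S\{\phi(x),\phi(y)\}$ for $\phi\in\mathrm{Aut}(G)$, the convexity of the set of strong resolving functions (an intersection of half-spaces and box constraints), the weight-preservation $\bar g(V(G))=g(V(G))$, and the final constraint $2c\ge 1$ coming from Observation~\ref{observation-1}(b) once $M(G)=V(G)$ is established; your remark that the matching bound of Proposition~\ref{matchingN} cannot reach $\frac{|V(G)|}{2}$ when $|V(G)|$ is odd is also apt. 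However, the paper's own toolkit gives a noticeably shorter route: since automorphisms of $G$ preserve the MMD relation, $\mathrm{Aut}(G)$ acts as a group of automorphisms of $G_{SR}$, whose vertex set is $M(G)=V(G)$ (as you showed), so $G_{SR}$ is itself vertex-transitive and hence regular; Proposition~\ref{v_regular} then yields $sdim_f(G)=\frac{|M(G)|}{2}=\frac{|V(G)|}{2}$ at once. The two arguments are close cousins: the edge-summation in the proof of Proposition~\ref{v_regular} is the discrete counterpart of your group averaging, both exploiting symmetry to certify that the uniform weight $\frac12$ is optimal. Your version buys self-containment (it does not presuppose Proposition~\ref{v_regular}) and makes the role of the automorphism group transparent; the regularity route buys brevity given the lemmas already proved in Section~\ref{sect-general}.
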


From Theorem~\ref{v_transitive}, one can easily see that $sdim_f(\mathcal{P})=5$, $sdim_f(C_n)=\frac{n}{2}$, $sdim_f(C_n 
\square K_m)=\frac{nm}{2}$, and $sdim_f(C_n \square C_n)=\frac{n^2}{2}$, where $n \ge 3$ and $m \ge 2$. Next, we consider graphs $G$ satisfying $sdim_f(G)=\frac{|M(G)|}{2}$.

\begin{proposition}\label{v_regular}
Let $G$ be a connected graph. If each connected component of $G_{SR}$ is a regular graph, then $sdim_f(G)=\frac{|M(G)|}{2}$.
\end{proposition}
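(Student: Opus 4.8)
The plan is to combine the upper bound already in hand from Proposition~\ref{uppersdimF}, namely $sdim_f(G) \le \frac{1}{2}|M(G)|$, with a matching lower bound $sdim_f(G) \ge \frac{1}{2}|M(G)|$ obtained by a double-counting argument over the edges of each component of $G_{SR}$. A tempting first move is to invoke Proposition~\ref{matchingN} and claim that a regular graph has a perfect matching, but this fails: for instance $C_3 = K_3$ is $2$-regular yet has no perfect matching, so $\nu(G_{SR})$ can be strictly below $\frac{1}{2}|M(G)|$. Instead I would average the MMD inequalities over \emph{all} edges of $G_{SR}$ rather than over a matching.

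Concretely, let $g: V(G) \to [0,1]$ be an arbitrary strong resolving function of $G$. First observe that $G_{SR}$ has no isolated vertex: by definition every $x \in M(G) = V(G_{SR})$ admits some $y$ with $x$ MMD $y$, so each vertex has degree at least one. Now fix a connected component $C$ of $G_{SR}$ and suppose it is $k$-regular with $k \ge 1$. For each edge $uv \in E(C)$ we have $u$ MMD $v$ in $G$, hence $g(u)+g(v) \ge 1$ by Observation~\ref{observation-1}(b). Summing these inequalities over all $|E(C)| = \frac{1}{2}k\,|V(C)|$ edges, and noting that each vertex $w \in V(C)$ is counted exactly $\deg(w) = k$ times on the left, I obtain
\[
k \sum_{w \in V(C)} g(w) \;=\; \sum_{uv \in E(C)} \big(g(u)+g(v)\big) \;\ge\; |E(C)| \;=\; \tfrac{1}{2}k\,|V(C)|,
\]
and dividing by $k$ gives $g(V(C)) \ge \frac{1}{2}|V(C)|$.

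Summing this over the pairwise disjoint components of $G_{SR}$ yields $g(M(G)) \ge \frac{1}{2}|M(G)|$, and since $g$ is nonnegative, $g(V(G)) \ge g(M(G)) \ge \frac{1}{2}|M(G)|$. As $g$ was arbitrary, $sdim_f(G) \ge \frac{1}{2}|M(G)|$, which together with Proposition~\ref{uppersdimF} forces $sdim_f(G) = \frac{1}{2}|M(G)|$.

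The step I expect to need the most care is the bookkeeping in the double count: handling each component with its own regularity degree $k$ (distinct components may be regular of distinct degrees), confirming $k \ge 1$ so that the division is legitimate, and verifying that any weight $g$ places on vertices outside $M(G)$ only strengthens the final inequality. Notably, the argument uses only the nonnegativity of $g$ and the MMD inequality, so the restriction of the codomain to $[0,1]$ plays no role in the lower bound.
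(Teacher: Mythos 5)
Your proof is correct and is essentially identical to the paper's own argument: both establish the lower bound by summing the MMD inequalities $g(u)+g(v)\ge 1$ over all edges of each $r$-regular component of $G_{SR}$, noting each vertex is counted $r$ times, and then combine with Proposition~\ref{uppersdimF} for the matching upper bound. Your remark that a matching-based lower bound via Proposition~\ref{matchingN} would fail (e.g., for odd components like $K_3$) is a nice observation but does not change the substance of the argument.
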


\begin{proof}
Let $g:V(G) \rightarrow [0,1]$ be a strong resolving function of a connected graph $G$. Let $G_{SR}$ be a disjoint union of $G^1, G^2, \ldots, G^k$, where $k \ge 1$. For each $i \in \{1,2,\ldots,k\}$, let $G^i$ be a $r_i$-regular graph of order $m_i$, where $r_i \ge 1$ and $m_i \ge 2$. Notice that $M(G)=V(G_{SR})=\cup_{i=1}^{k}V(G^i)$ and $|M(G)|=\sum_{i=1}^{k}m_i$. For each $i \in \{1,2,\ldots,k\}$, since $|E(G^i)|=\frac{m_ir_i}{2}$, noting that each edge $uv\in E(G^i) \subseteq E(G_{SR})$ satisfies $g(u)+g(v) \ge 1$, there are $\frac{m_ir_i}{2}$ such inequalities for $G^i$ and the term $g(u)$ appears exactly $r_i$ times for each $u \in V(G^i)$. By summing over all such inequalities, we have $r_i \cdot g(V(G^i)) \ge \frac{m_ir_i}{2}$, i.e., $g(V(G^i)) \ge \frac{m_i}{2}$, for each $i \in \{1,2,\ldots, k\}$. Thus, $g(V(G))\ge g(M(G))=\sum_{i=1}^{k}g(V(G^i)) \ge \sum_{i=1}^{k}\frac{m_i}{2}=\frac{1}{2}\sum_{i=1}^{k}m_i=\frac{1}{2}|M(G)|$. On the other hand, $sdim_f(G) \le \frac{|M(G)|}{2}$ by Proposition~\ref{uppersdimF}. Thus, $sdim_f(G)=\frac{|M(G)|}{2}$.~\hfill
\end{proof}

It is worth noting that $sdim_f$ is not a monotone parameter with respect to subgraph inclusion in any sense (see~\cite{sdimF} for details). However, $sdim_f$ is indeed a monotone parameter with respect to subgraph inclusion for strong resolving graphs. Hereinafter, for graphs $H$ and $G$, we shall indicate that $H$ is a subgraph of $G$ by $H \subseteq G$.

\begin{lemma}\label{sr_subgraph}
Let $G$ and $H$ be connected graphs. If $H_{SR} \subseteq G_{SR}$, then $sdim_f(H) \le sdim_f(G)$.
\end{lemma}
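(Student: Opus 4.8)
The plan is to take a minimum strong resolving function $g$ of $G$ and show that its restriction to the boundary vertices of $H$, extended by zero, is a strong resolving function of $H$ of no larger total weight. Since $H_{SR} \subseteq G_{SR}$, I regard $M(H) = V(H_{SR})$ as a subset of $V(G_{SR}) = M(G) \subseteq V(G)$ in such a way that every edge of $H_{SR}$ is an edge of $G_{SR}$; in particular, every pair adjacent in $H_{SR}$ is MMD in $G$. I would define $g' : V(H) \to [0,1]$ by $g'(v) = g(v)$ for $v \in M(H)$ and $g'(v) = 0$ otherwise. Because $g \ge 0$, this immediately gives $g'(V(H)) = \sum_{v \in M(H)} g(v) \le \sum_{v \in M(G)} g(v) \le g(V(G)) = sdim_f(G)$, so the lemma reduces to verifying that $g'$ is indeed a strong resolving function of $H$.

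The verification rests on the following auxiliary claim, which I expect to be the main obstacle, since it is not recorded in the excerpt: for every pair of distinct vertices $x, y$ of a connected graph, the set $S\{x,y\}$ contains two vertices $u, w$ that are mutually maximally distant. Granting the claim for $H$, fix distinct $x, y \in V(H)$ and let $u, w \in S\{x,y\}$ be such an MMD pair of $H$, so $uw \in E(H_{SR}) \subseteq E(G_{SR})$. Then $u$ MMD $w$ in $G$, and Observation~\ref{observation-1}(b) yields $g(u) + g(w) \ge 1$. Since $u, w \in M(H)$ we have $g'(u) + g'(w) = g(u) + g(w)$, and because $u, w$ are two distinct members of $S\{x,y\}$ with $g' \ge 0$, it follows that $g'(S\{x,y\}) \ge g'(u) + g'(w) \ge 1$. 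As $x, y$ were arbitrary, $g'$ is a strong resolving function of $H$, and combined with the weight bound above this gives $sdim_f(H) \le sdim_f(G)$.

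It remains to prove the auxiliary claim, and here is how I would proceed. Given distinct $x, y$, I would first choose, among all vertices $z$ with $x$ on a $y$--$z$ geodesic, a vertex $u$ maximizing $d(y,z)$; a short maximality argument (a neighbor of $u$ farther from $y$ would still keep $x$ on the corresponding geodesic, contradicting the choice of $u$) shows $u$ is maximally distant from $y$, and plainly $u \in S\{x,y\}$. Next, among all $z$ with $y$ on a $u$--$z$ geodesic, I would choose $w$ maximizing $d(u,z)$; the same maximality argument shows $w$ is maximally distant from $u$, while the triangle inequality together with the fact that $u$ is maximally distant from $y$ gives $d(w,u') \le d(w,y) + d(y,u') \le d(w,y) + d(y,u) = d(w,u)$ for each neighbor $u'$ of $u$, so $u$ is maximally distant from $w$; hence $u$ MMD $w$. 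Finally, combining the relations $d(u,w) = d(u,y) + d(y,w)$ and $d(u,y) = d(u,x) + d(x,y)$ forces $d(x,w) \ge d(u,w) - d(u,x) = d(x,y) + d(y,w) \ge d(x,w)$, so $d(x,w) = d(x,y) + d(y,w)$; that is, $y$ lies on an $x$--$w$ geodesic and $w \in S\{x,y\}$ as well. This produces the required MMD pair inside $S\{x,y\}$ and completes the argument.
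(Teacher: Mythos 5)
Your proof is correct, and its skeleton is the same as the paper's: take a minimum strong resolving function $g$ of $G$, transfer it to $H$, and use $E(H_{SR})\subseteq E(G_{SR})$ together with Observation~\ref{observation-1}(b) to verify the constraints. The difference lies in what actually gets proved. The paper's proof is one line --- restrict $g$ to $V(H)$ and declare the restriction a strong resolving function of $H$ ``since an edge in $H_{SR}$ is an edge in $G_{SR}$'' --- which tacitly invokes the fact that a function satisfying $h(u)+h(w)\ge 1$ on every edge $uw$ of $H_{SR}$ is automatically a strong resolving function of $H$; equivalently, that every set $S\{x,y\}$ contains an MMD pair. That is precisely the claim you identified as the crux, and your double-maximization proof of it is correct: extending from $y$ through $x$ to a farthest admissible $u$ makes $u$ maximally distant from $y$; extending from $u$ through $y$ to a farthest admissible $w$ makes $w$ maximally distant from $u$; the estimate $d(w,u')\le d(w,y)+d(y,u')\le d(w,y)+d(y,u)=d(w,u)$ gives the reverse direction, and the additivity relations force $y$ onto an $x$--$w$ geodesic, so both $u$ and $w$ (distinct, since $d(u,w)\ge d(u,y)\ge 1$) lie in $S\{x,y\}$. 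You also quietly repaired a small imprecision: $H_{SR}\subseteq G_{SR}$ only identifies $M(H)$ with a subset of $V(G)$, not all of $V(H)$, so the paper's restriction $g|_{V(H)}$ is not literally well defined, whereas your ``restrict on $M(H)$, extend by zero'' is. In short, the paper buys brevity by leaning on a fact known from its references (the same fact underlies Theorem~\ref{v_cover} and is used implicitly again in, e.g., Theorem~\ref{thm_Cartesian}), while your version buys self-containedness at the cost of reproving it.
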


\begin{proof}
Let $G$ and $H$ be connected graphs satisfying $H_{SR} \subseteq G_{SR}$. Let $g: V(G) \rightarrow [0,1]$ be a strong 
resolving function of $G$, and let $h=g|_{V(H)}: V(H) \rightarrow [0,1]$ be the restriction of $g$ to $V(H)$. Since an edge 
in $H_{SR}$ is an edge in $G_{SR}$, $h$ is a strong resolving function of $H$; thus, $sdim_f(H) \le sdim_f(G)$.~\hfill
\end{proof}

As an immediate consequence of Proposition~\ref{v_regular} and Lemma~\ref{sr_subgraph}, we have the following result.

\begin{cor}\label{v_reg_sub}
Let $G$ be a connected graph. If $G_{SR}$ contains a regular graph as a subgraph with the same vertex set $V(G_{SR})$, then $sdim_f(G)=\frac{|M(G)|}{2}$.  
\end{cor}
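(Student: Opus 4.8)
The plan is to derive Corollary~\ref{v_reg_sub} as a direct combination of the two results the author has just established, namely Proposition~\ref{v_regular} and Lemma~\ref{sr_subgraph}, together with the universal upper bound of Proposition~\ref{uppersdimF}. The hypothesis is that $G_{SR}$ contains a subgraph, call it $R$, which is regular and satisfies $V(R)=V(G_{SR})$. The strategy is to manufacture an auxiliary graph $H$ whose strong resolving graph is exactly $R$, apply Proposition~\ref{v_regular} to $H$ to pin down $sdim_f(H)$, and then transport that lower bound to $G$ via the monotonicity Lemma~\ref{sr_subgraph}.

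First I would isolate the spanning regular subgraph $R \subseteq G_{SR}$ guaranteed by the hypothesis, noting that $V(R)=V(G_{SR})=M(G)$. The cleanest route is to observe that the proof of Proposition~\ref{v_regular} never uses any property of $G_{SR}$ beyond the edge inequalities $g(u)+g(v)\ge 1$ and the regularity/degree-counting argument; so in principle one can rerun that argument using only the edges of $R$. Concretely, since $R$ is regular (each component $r_i$-regular on $m_i$ vertices), summing the inequalities $g(u)+g(v)\ge 1$ over the edges of $R$ yields $g(M(G)) \ge \frac{1}{2}|V(R)| = \frac{1}{2}|M(G)|$ for every strong resolving function $g$ of $G$, exactly as in Proposition~\ref{v_regular}. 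Combined with the upper bound $sdim_f(G)\le \frac{1}{2}|M(G)|$ from Proposition~\ref{uppersdimF}, this forces equality.

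To phrase this strictly as an application of the two cited results rather than re-deriving the inequality, I would instead invoke Lemma~\ref{sr_subgraph} as stated: one needs a connected graph $H$ with $H_{SR}=R$ (or at least $H_{SR}\subseteq G_{SR}$ with $H_{SR}$ regular and spanning), for which Proposition~\ref{v_regular} gives $sdim_f(H)=\frac{|M(H)|}{2}=\frac{|V(R)|}{2}=\frac{|M(G)|}{2}$; then Lemma~\ref{sr_subgraph} yields $sdim_f(G)\ge sdim_f(H)=\frac{|M(G)|}{2}$, and Proposition~\ref{uppersdimF} supplies the reverse inequality.

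The main obstacle is exactly this realizability issue: Lemma~\ref{sr_subgraph} is phrased in terms of an ambient graph $H$ and its strong resolving graph $H_{SR}$, so to apply it one must know that an arbitrary regular graph $R$ actually \emph{is} realizable as the strong resolving graph of some connected graph. If such a realization is available (and the author seems to treat strong resolving graphs as flexible objects throughout this section), the corollary drops out immediately. If realizability is not guaranteed for every regular $R$, then the honest and self-contained route is the first one above: bypass the auxiliary $H$ entirely and simply repeat the edge-summing/degree-counting computation from Proposition~\ref{v_regular} restricted to the spanning regular subgraph $R$, which only ever needs the inequalities $g(u)+g(v)\ge 1$ for edges actually present in $R \subseteq G_{SR}$, and these hold since each such edge is an edge of $G_{SR}$, i.e.\ its endpoints are MMD in $G$. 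Either way the equality $sdim_f(G)=\frac{|M(G)|}{2}$ follows.
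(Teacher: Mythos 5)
Your proposal is correct, and your diagnosis of the subtle point is exactly right. The paper offers no written argument for this corollary: it simply declares it ``an immediate consequence of Proposition~\ref{v_regular} and Lemma~\ref{sr_subgraph}.'' Read literally, that combination has precisely the gap you flag: Lemma~\ref{sr_subgraph} compares two \emph{graphs} $G$ and $H$ through their strong resolving graphs, so invoking it here would require a connected graph $H$ with $H_{SR}$ equal to the spanning regular subgraph $R\subseteq G_{SR}$, and such a realization is not guaranteed (not every graph arises as a strong resolving graph, which is why realizability is treated as a nontrivial question in the survey the paper cites). Your fallback route is the honest repair and is surely what the authors intend: every edge $uv$ of $R$ is an edge of $G_{SR}$, i.e.\ $u$ MMD $v$ in $G$, so Observation~\ref{observation-1}(b) gives $g(u)+g(v)\ge 1$ for any strong resolving function $g$ of $G$; summing these inequalities over the edges of each $r_i$-regular component of $R$ (each vertex occurs in exactly $r_i$ of them) yields $g(V(G))\ge g(M(G))\ge \frac{1}{2}|V(R)|=\frac{1}{2}|M(G)|$, and Proposition~\ref{uppersdimF} supplies the matching upper bound. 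One small caveat you share with the paper: ``regular'' must be read as $r$-regular with $r\ge 1$ (as in the hypothesis $r_i\ge 1$ of Proposition~\ref{v_regular}), since a $0$-regular spanning subgraph always exists and would make the statement false in general; with that reading, your argument is complete.
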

 
Now, one may have noticed that Observation~\ref{observation} and Proposition~\ref{uppersdimF}, taken together, yield the following result. 

\begin{cor}\label{double_upper_bound}
For any connected graph $G$, $sdim_f(G) \le \min \left\{\frac{|M(G)|}{2}, sdim(G)\right\}$. 
\end{cor}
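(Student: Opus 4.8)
The plan is to simply combine the two upper bounds already established in this section, since each of them holds for every connected graph $G$ with no side hypotheses. Observation~\ref{observation} gives the inequality $sdim_f(G) \le sdim(G)$, obtained by noting that the indicator function of a minimum strong resolving set is itself a strong resolving function. Independently, Proposition~\ref{uppersdimF} gives $sdim_f(G) \le \frac{1}{2}|M(G)|$, where $M(G)$ is the set of boundary vertices of $G$. I would first record that both bounds apply to the same arbitrary connected graph $G$, so that neither introduces a restriction that would prevent us from invoking them together.

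The one elementary fact to invoke is that a single real number that is simultaneously at most $a$ and at most $b$ is at most $\min\{a,b\}$. Applying this with $a = sdim(G)$ and $b = \frac{|M(G)|}{2}$, and with the number in question being $sdim_f(G)$, yields $sdim_f(G) \le \min\bigl\{\frac{|M(G)|}{2},\, sdim(G)\bigr\}$, which is precisely the assertion. There is essentially no obstacle here: the result is an immediate consequence of the conjunction of two previously proved statements, and the only thing to check is that the hypotheses of Observation~\ref{observation} and Proposition~\ref{uppersdimF} are both met, which they trivially are since both require only that $G$ be connected.

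Thus the proof is a one-line deduction, and I would present it as such rather than redeveloping either of the contributing bounds. The point worth flagging for the reader is not the difficulty of the argument but its significance for what follows: this corollary frames the quantity $\min\bigl\{\frac{|M(G)|}{2}, sdim(G)\bigr\}$ as a natural upper estimate for $sdim_f(G)$, and it is precisely the looseness of this estimate that the promised family $\mathcal{F}$ is designed to exhibit, where the gap between $\min\bigl\{\frac{|M(G)|}{2}, sdim(G)\bigr\}$ and $sdim_f(G)$ can be made arbitrarily large.
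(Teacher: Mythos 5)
Your proof is correct and matches the paper exactly: the paper derives this corollary by the same one-line combination of Observation~\ref{observation} and Proposition~\ref{uppersdimF}, both of which hold for any connected graph $G$. Nothing is missing.
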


One may wonder how far $sdim_f(G)$ can deviate from the two upper bounds in Corollary~\ref{double_upper_bound}. 
Although one example of a graph $G$ satisfying $sdim_f(G) < \frac{1}{2}|M(G)|$ was given in~\cite{fracsdim}, we advance further by showing that $\min \left\{\frac{|M(G)|}{2}, sdim(G)\right\}$ can be arbitrarily larger than $sdim_f(G)$ with the next example. The example is very interesting for the fact that all the graphs we initially looked at -- common or standard examples -- achieved equality in Corollary~\ref{double_upper_bound}. 

\begin{remark}\label{graph_G_q} 
There is a family of graphs $G$ such that $\min \left\{\frac{|M(G)|}{2}, sdim(G)\right\}-sdim_f(G)$ can be arbitrarily large. 
Let $\mathcal{F}$ be a family of graphs $G_q$, $q \ge 1$, constructed in the following way:
\begin{itemize}
\item[(i)] Consider $q+1$ paths $a_ib_ic_i$ for $i\in\{0,1,\ldots,q\}$; 
\item[(ii)] Add the edges $a_ia_0$, $b_ib_0$ and $c_ic_0$ for each $i\in\{1,\ldots,q\}$;
\item[(iii)] Add the isolated vertices $y_i$ and $z_i$ for $i\in \{0,1,\ldots,q\}$;
\item[(iv)] Add the edges $a_iy_i$ and $c_iz_i$ for each $i \in \{0,1,\ldots, q\}$;
\item[(v)] Add a vertex $x$ and the edges $xa_0$ and $xc_0$.
\end{itemize}

For each $G_q\in \mathcal{F}$ (see $G_4 \in \mathcal{F}$ in Figure~\ref{G_4}), we will show that $|M(G_q)|=3q+3$, $sdim(G)=2q+2$, and $sdim_f(G_q)=q+2$. In constructing $(G_q)_{SR}$, notice the following:
\begin{itemize}
\item If we let $L=\cup_{i=0}^{q}\{y_i,z_i\}$, any two vertices in $L$ form an MMD pair in $G_{q}$, and no vertex in $L$ is MMD with any vertex in $V(G_q)-L$;
\item For each $i\in \{1,\ldots,q\}$, the vertex $b_i$ is MMD only with the vertex $x$ and vice versa;
\item No vertex in $\cup_{i=0}^{q}\{a_i,c_i\}$ belongs to $M(G_q)$ by Observation~\ref{observation-1}(a), and $b_0\not\in M(G_q)$.
\end{itemize}

Since $(G_q)_{SR}$ consists of the disjoint union of a complete graph $K_{2q+2}$ and a star $K_{1,q}$ (see $(G_4)_{SR}$ in Figure~\ref{G_4}), we have $|M(G_q)|=3q+3$. By Theorem~\ref{v_cover}, $sdim(G_q)=\alpha((G_q)_{SR})=(2q+2-1)+1=2q+2$. Now, $sdim_f(G_q) \ge \nu ((G_q)_{SR})=q+1+1=q+2$ by Proposition~\ref{matchingN}. On the other hand, let $g:V(G_q) \rightarrow [0,1]$ be a function defined by $g(x)=1$, $g(u)=\frac{1}{2}$ for each $u \in L$, and $g(w)=0$ for each $w \in V(G_q)-(\{x\} \cup L)$; then $g$ is a strong resolving function of $G_q$, and hence $sdim_f(G_q) \le 1+\frac{|L|}{2}=q+2$. Thus, $sdim_f(G_q)=q+2$.

Therefore, for $q \ge 2$, $\min\{\frac{1}{2}|M(G)|, sdim(G)\}-sdim_f(G)=\frac{1}{2}(3q+3)-(q+2)=\frac{1}{2}(q-1)$ can be arbitrarily large.
\end{remark}

\begin{figure}[ht]
\centering
\begin{tikzpicture}[scale=.5, transform shape]
\node [draw, shape=circle] (b0) at  (-0.5,0) {};
\node [draw, shape=circle] (c0) at  (3.5,0) {};
\node [draw, shape=circle] (c1) at  (2.5,1.5) {};
\node [draw, shape=circle] (c2) at  (2.5,4.5) {};
\node [draw, shape=circle] (c3) at  (4.5,3) {};
\node [draw, shape=circle] (c4) at  (4.5,6) {};
\node [draw, shape=circle] (b3) at  (0.5,3) {};
\node [draw, shape=circle] (b4) at  (0.5,6) {};
\node [draw, shape=circle] (a0) at  (-4.5,0) {};
\node [draw, shape=circle] (a1) at  (-5.5,1.5) {};
\node [draw, shape=circle] (a2) at  (-5.5,4.5) {};
\node [draw, shape=circle] (a3) at  (-3.5,3) {};
\node [draw, shape=circle] (a4) at  (-3.5,6) {};
\node [draw, shape=circle] (b1) at  (-1.5,1.5) {};
\node [draw, shape=circle] (b2) at  (-1.5,4.5) {};
\node [draw, shape=circle] (x) at  (-0.5,-2) {};

\node [draw, shape=circle] (y0) at  (-5.5,0.7) {};
\node [draw, shape=circle] (y1) at  (-6.3,2.5) {};
\node [draw, shape=circle] (y2) at  (-5.8,5.5) {};
\node [draw, shape=circle] (y3) at  (-3.15,4) {};
\node [draw, shape=circle] (y4) at  (-3.3,7) {};
\node [draw, shape=circle] (z0) at  (4.5,0.7) {};
\node [draw, shape=circle] (z1) at  (1.95,2.5) {};
\node [draw, shape=circle] (z2) at  (2.25,5.5) {};
\node [draw, shape=circle] (z3) at  (4.85,4) {};
\node [draw, shape=circle] (z4) at  (4.7,7) {};

\node [draw, shape=circle] (y00) at  (11.5,5.5) {};
\node [draw, shape=circle] (y11) at  (9.3,4.5) {};
\node [draw, shape=circle] (y22) at  (8.5,2.7) {};
\node [draw, shape=circle] (y33) at  (8.5,0.8) {};
\node [draw, shape=circle] (y44) at  (9.3,-1) {};
\node [draw, shape=circle] (z00) at  (11.5,-2) {};
\node [draw, shape=circle] (z11) at  (13.7,-1) {};
\node [draw, shape=circle] (z22) at  (14.5,0.8) {};
\node [draw, shape=circle] (z33) at  (14.5,2.7) {};
\node [draw, shape=circle] (z44) at  (13.7,4.5) {};

\node [draw, shape=circle] (x1) at  (16.3,1.75) {};
\node [draw, shape=circle] (b11) at  (18,-2) {};
\node [draw, shape=circle] (b22) at  (18,0.5) {};
\node [draw, shape=circle] (b33) at  (18,3) {};
\node [draw, shape=circle] (b44) at  (18,5.5) {};

\node [scale=1.4] at (-0.5,-2.45) {$x$};
\node [scale=1.4] at (-4.5,-0.5) {$a_0$};
\node [scale=1.4] at (-0.5,-0.5) {$b_0$};
\node [scale=1.4] at (3.5,-0.5) {$c_0$};
\node [scale=1.4] at (-5.3,1.9) {$a_1$};
\node [scale=1.4] at (-1.5,2) {$b_1$};
\node [scale=1.4] at (2.8,1.9) {$c_1$};
\node [scale=1.4] at (-3.1,2.6) {$a_4$};
\node [scale=1.4] at (0.8,2.5) {$b_4$};
\node [scale=1.4] at (4.8,2.5) {$c_4$};
\node [scale=1.4] at (-5.1,4.8) {$a_2$};
\node [scale=1.4] at (-1.5,5) {$b_2$};
\node [scale=1.4] at (2.85,4.8) {$c_2$};
\node [scale=1.4] at (-3.05,6.3) {$a_3$};
\node [scale=1.4] at (0.5,6.5) {$b_3$};
\node [scale=1.4] at (4.95,6.3) {$c_3$};

\node [scale=1.4] at (-6,0.7) {$y_0$};
\node [scale=1.4] at (-6.8,2.6) {$y_1$};
\node [scale=1.4] at (-6.3,5.6) {$y_2$};
\node [scale=1.4] at (-3.2,7.5) {$y_3$};
\node [scale=1.4] at (-2.6,4) {$y_4$};
\node [scale=1.4] at (5,0.7) {$z_0$};
\node [scale=1.4] at (2.5,2.5) {$z_1$};
\node [scale=1.4] at (2.8,5.5) {$z_2$};
\node [scale=1.4] at (4.8,7.4) {$z_3$};
\node [scale=1.4] at (5.4,4) {$z_4$};

\node [scale=1.4] at (11.5,6) {$y_0$};
\node [scale=1.4] at (8.8,4.5) {$y_1$};
\node [scale=1.4] at (8,2.8) {$y_2$};
\node [scale=1.4] at (8,0.8) {$y_3$};
\node [scale=1.4] at (8.7,-1) {$y_4$};
\node [scale=1.4] at (11.5,-2.5) {$z_0$};
\node [scale=1.4] at (14.2,4.5) {$z_4$};
\node [scale=1.4] at (15,2.7) {$z_3$};
\node [scale=1.4] at (15,0.8) {$z_2$};
\node [scale=1.4] at (14.2,-1.05) {$z_1$};

\node [scale=1.4] at (15.8,1.75) {$x$};
\node [scale=1.4] at (18.6,-2) {$b_1$};
\node [scale=1.4] at (18.6,0.5) {$b_2$};
\node [scale=1.4] at (18.6,3) {$b_3$};
\node [scale=1.4] at (18.6,5.5) {$b_4$};

\node [scale=1.4] at (-0.5,-3.5) {\large $G_4$};
\node [scale=1.4] at (14.4,-3.5) {\large $(G_4)_{SR}$};

\draw(x)--(a0)--(a4)--(b4)--(c4)--(c0)--(x);
\draw(b0)--(a0)--(a3)--(b3)--(c3)--(c0)--(b0);
\draw(a0)--(a2)--(b2)--(c2)--(c0);
\draw(a0)--(a1)--(b1)--(c1)--(c0);
\draw(b1)--(b0)--(b2);
\draw(b3)--(b0)--(b4);
\draw(a0)--(y0);
\draw(a1)--(y1);
\draw(a2)--(y2);
\draw(a3)--(y3);
\draw(a4)--(y4);
\draw(c0)--(z0);
\draw(c1)--(z1);
\draw(c2)--(z2);
\draw(c3)--(z3);
\draw(c4)--(z4);

\draw(y00)--(y11)--(y22)--(y33)--(y44)--(z00)--(z11)--(z22)--(z33)--(z44)--(y00);
\draw(y00)--(y22)--(y44)--(z11)--(z33)--(y00);
\draw(y11)--(y33)--(z00)--(z22)--(z44)--(y11);
\draw(y00)--(y33)--(z11)--(z44)--(y22)--(z00)--(z33)--(y11)--(y44)--(z22)--(y00);
\draw(y00)--(y44)--(z33)--(y22)--(z11)--(y00);
\draw(y11)--(z00)--(z44)--(y33)--(z22)--(y11);
\draw(y00)--(z00);
\draw(y11)--(z11);
\draw(y22)--(z22);
\draw(y33)--(z33);
\draw(y44)--(z44);

\draw(b33)--(x1)--(b44);
\draw(b11)--(x1)--(b22);
\end{tikzpicture}
\caption{The graph $G_4\in \mathcal{F}$ and its strong resolving graph.}\label{G_4}
\end{figure}
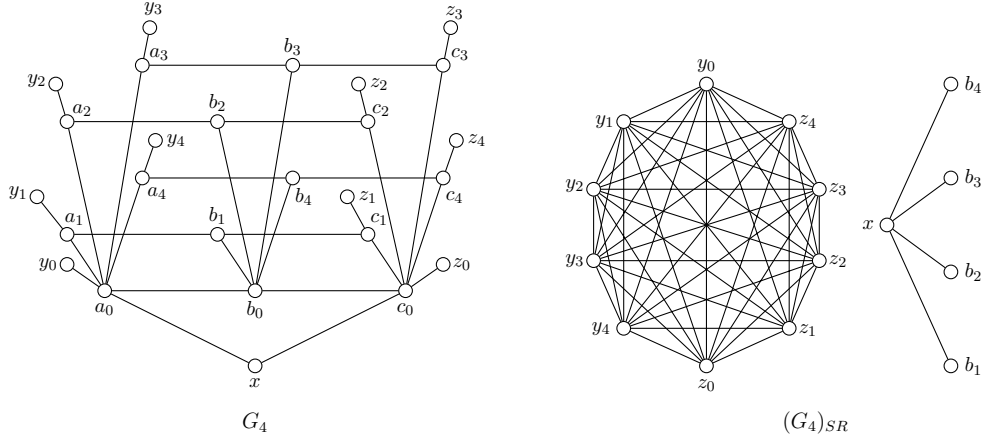

The family $\mathcal{F}$ described in Remark~\ref{graph_G_q} yields the following realization result.

\begin{cor}
For any positive integer $k$, there exists a connected graph $G$ such that 
$$\min\left\{\frac{|M(G)|}{2}, sdim(G)\right\}-sdim_f(G)=k.$$
\end{cor}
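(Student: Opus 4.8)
The plan is to read off the desired realization directly from the family $\mathcal{F}$ constructed in Remark~\ref{graph_G_q}, after specializing the parameter $q$. Given a positive integer $k$, I would set $q=2k+1$ and take $G=G_q\in\mathcal{F}$. Since $k\ge 1$ we have $q\ge 3$, so $G_q$ is a bona fide member of the family, and all three evaluations established in Remark~\ref{graph_G_q} apply: $|M(G_q)|=3q+3$, $sdim(G_q)=2q+2$, and $sdim_f(G_q)=q+2$.

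The one point that still needs checking is which of the two quantities inside the minimum is the smaller, since the computation in Remark~\ref{graph_G_q} implicitly takes $\frac{1}{2}|M(G_q)|$ as the active term. Comparing $\frac{|M(G_q)|}{2}=\frac{3q+3}{2}$ with $sdim(G_q)=2q+2=\frac{4q+4}{2}$, and noting that $3q+3<4q+4$ for every $q\ge 1$, I conclude that $\min\left\{\frac{|M(G_q)|}{2},\,sdim(G_q)\right\}=\frac{3q+3}{2}$. Hence
$$\min\left\{\frac{|M(G_q)|}{2},\,sdim(G_q)\right\}-sdim_f(G_q)=\frac{3q+3}{2}-(q+2)=\frac{q-1}{2}.$$
Substituting $q=2k+1$ gives $\frac{q-1}{2}=\frac{2k}{2}=k$, which is exactly the claimed equality.

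There is no substantive obstacle here; the corollary is an immediate specialization of the already-verified Remark~\ref{graph_G_q}. The only genuine subtlety is a parity issue: the gap produced by $G_q$ equals $\frac{q-1}{2}$, which is a (positive) integer precisely when $q$ is odd, so an arbitrary prescribed integer $k$ is attained only through the odd choice $q=2k+1$; the even values of $q$ would produce half-integer gaps and hence cannot realize integer targets. Once $q$ is pinned down this way, the desired equality follows by the single direct substitution above, with no further estimation required.
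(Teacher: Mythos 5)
Your proposal is correct and follows exactly the paper's own route: take $G_q\in\mathcal{F}$ with $q=2k+1$, plug in the values $|M(G_q)|=3q+3$, $sdim(G_q)=2q+2$, $sdim_f(G_q)=q+2$ from Remark~\ref{graph_G_q}, and compute the gap $\frac{q-1}{2}=k$. Your explicit check that $\frac{3q+3}{2}<2q+2$ (so the minimum is the boundary-vertex term) is a nice touch that the paper leaves implicit, but the argument is the same.
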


\begin{proof}
 Consider a graph $G_q\in \mathcal{F}$. As shown in Remark \ref{graph_G_q}, $|M(G_q)|=3q+3$, $sdim(G_q)=2q+2$, and $sdim_f(G_q)=q+2$. Hence, if $q= 2k+1\ge 3$, then $\min\{\frac{1}{2}|M(G)|, sdim(G)\}-sdim_f(G)=k$.~\hfill
 \end{proof}


\section{Corona product graphs}

Let $G$ and $H$ be two graphs of order $n$ and $m$, respectively, and let $V(G)=\{u_1, u_2, \ldots, u_n\}$. The corona 
product $G \odot H$ is obtained from $G$ and $n$ copies of $H$, say $H_1, H_2, \ldots, H_n$, by drawing an edge from each vertex $u_i$ to every vertex of $H_i$ for each $i \in \{1,2,\ldots, n\}$. For results on the strong metric dimension of corona product graphs, see~\cite{cp}. We first consider the fractional strong metric dimension of $G \odot H$ when $G$ is a connected graph of order at least two.

\begin{lemma}\label{lem_corona}
Let $G$ be a connected graph of order $n \ge 2$. Let $H$ be a graph of order $m$, and let $H_1, H_2, \ldots, H_n$ be $n$ 
disjoint copies of $H$. If $g:  V(G \odot H) \rightarrow [0,1]$ is a minimum strong resolving function of $G \odot H$, then
\begin{itemize}
\item[(a)] $g(V(H_i)) \ge 1$ for each $i \in \{1,2,\dots, n\}$;
\item[(b)] $g(V(G))=0$.
\end{itemize}
\end{lemma}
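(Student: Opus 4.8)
The plan is to prove the two parts essentially independently: part (b) drops out of the observation that every vertex of $G$ is a cut-vertex of $G\odot H$, while part (a) comes from locating a mutually maximally distant pair inside each copy $H_i$ and invoking Observation~\ref{observation-1}(b).

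First I would dispose of part (b). Fix $i\in\{1,\dots,n\}$ and note that, by the corona construction, the only edges leaving $V(H_i)$ are the edges joining $V(H_i)$ to $u_i$. Hence deleting $u_i$ disconnects the nonempty set $V(H_i)$ from the rest of $G\odot H$, which is itself nonempty because $n\ge 2$ guarantees vertices $u_j$ (and copies $H_j$) with $j\ne i$. Thus each $u_i$ is a cut-vertex of $G\odot H$, and Observation~\ref{observation-1}(a) forces $g(u_i)=0$ for the minimum strong resolving function $g$. Since $V(G)=\{u_1,\dots,u_n\}$ inside $V(G\odot H)$, summing gives $g(V(G))=\sum_{i=1}^{n}g(u_i)=0$.

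For part (a) I would fix a copy $H_i$ and use two structural facts of the corona: (i) every neighbor in $G\odot H$ of a vertex of $H_i$ lies in $V(H_i)\cup\{u_i\}$, and (ii) $u_i$ is adjacent to every vertex of $H_i$, so $d(u_i,w)=1$ for each $w\in V(H_i)$. Let $w,w'\in V(H_i)$ realize $D:=\max\{d(a,b):a,b\in V(H_i)\}$, where $D\ge 1$ since $H_i$ has at least two vertices. I claim $w$ and $w'$ are mutually maximally distant. Indeed, by (i) any neighbor $v$ of $w$ satisfies $v\in V(H_i)$ or $v=u_i$; in the first case $d(v,w')\le D=d(w,w')$ by the maximality of $D$, and in the second $d(u_i,w')=1\le D$ by (ii). So $w$ is maximally distant from $w'$, and by symmetry $w'$ is maximally distant from $w$. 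Observation~\ref{observation-1}(b) then gives $g(w)+g(w')\ge 1$, and since $\{w,w'\}\subseteq V(H_i)$ and $g$ is nonnegative, $g(V(H_i))\ge g(w)+g(w')\ge 1$.

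The cut-vertex step in (b) is routine. The step that needs care is the maximality check in (a): one must confirm that no neighbor of $w$ is farther from $w'$ than $w$ itself is. This is precisely where the corona hypothesis does the work, since fact (i) confines the neighborhood of a vertex of a copy to that copy together with its attachment point $u_i$, so the internal diametral pair cannot be improved by stepping to a neighbor. I would also flag that (a) relies on each copy having at least two vertices, i.e.\ $m\ge 2$, so that the pair $w,w'$ exists.
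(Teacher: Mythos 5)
Your proof is correct, and your part (b) coincides with the paper's (each $u_i$ is a cut-vertex, then Observation~\ref{observation-1}(a)), but your part (a) takes a genuinely different route. The paper never exhibits an MMD pair inside a copy: it observes that for any two distinct $x,y\in V(H_i)$, every vertex $u\notin V(H_i)$ satisfies $d_{G\odot H}(x,u)=d_{G\odot H}(y,u)$ (both distances pass through $u_i$), so no such $u$ can witness strong resolution of the pair; hence $S_{G\odot H}\{x,y\}\subseteq V(H_i)$ and $g(V(H_i))\ge g(S_{G\odot H}\{x,y\})\ge 1$ follows directly from the definition of a strong resolving function. You instead produce an explicit MMD pair --- a diametral pair $w,w'$ of $V(H_i)$ with respect to the distance in $G\odot H$ --- and invoke Observation~\ref{observation-1}(b); your maximality check is valid precisely because $N_{G\odot H}(w)\subseteq V(H_i)\cup\{u_i\}$ and $d(u_i,w')=1\le D$. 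The paper's equidistance argument is a bit more economical (no MMD verification needed, and it proves the slightly stronger fact that pairs inside a copy can only be resolved inside that copy), while yours localizes the required weight to two named vertices and fits the strong-resolving-graph viewpoint (each copy contributes an edge to $(G\odot H)_{SR}$) that the paper uses elsewhere. One substantive point in your favor: your flag that part (a) needs $m\ge 2$ is genuinely needed, not pedantry --- for $m=1$ the claim fails (e.g., $K_2\odot K_1=P_4$ admits a minimum strong resolving function putting weight $1$ on one leaf and $0$ on the other, so one copy receives total weight $0$); the paper's proof also tacitly assumes two distinct vertices exist in $V(H_i)$ but never says so.
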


\begin{proof}
(a) Let $x,y \in V(H_i)$. Since $d_{G \odot H}(x,u)=d_{G \odot H}(y,u)$ for each $u \not\in V(H_i)$, $S_{G \odot H}\{x,y\} 
\subseteq V(H_i)$; thus $g(V(H_i)) \ge 1$ for each $i \in \{1,2,\dots, n\}$.

(b) Each $v\in V(G)$ is a cut-vertex in $G \odot H$; so the desired result 
follows from Observation~\ref{observation-1}(a).~\hfill
\end{proof}

\begin{proposition}\label{corona_equation}
Let $G$ be a connected graph of order $n \ge 2$, and let $H$ be a graph of order $m \ge 1$. Then $sdim_f(G \odot 
H)=\frac{nm}{2}$.
\end{proposition}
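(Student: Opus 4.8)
The plan is to apply Corollary~\ref{v_reg_sub}: I will show that $(G \odot H)_{SR}$ contains a spanning regular subgraph, whence $sdim_f(G \odot H) = \frac{1}{2}|M(G \odot H)|$, and then compute that $|M(G \odot H)| = nm$. Two structural facts drive the argument: first, the only boundary vertices of $G \odot H$ are the vertices lying in the copies $H_1, \ldots, H_n$; second, any two vertices belonging to distinct copies are mutually maximally distant.

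First I would pin down $M(G \odot H)$. Each $u_i \in V(G)$ is a cut-vertex of $G \odot H$ (as already noted in the proof of Lemma~\ref{lem_corona}(b), since deleting it detaches $H_i$), and a cut-vertex is never mutually maximally distant with any vertex: if $u_i$ separates some neighbor $w$ of $u_i$ lying in one component of $(G \odot H) - u_i$ from a target in another component, then the shortest $w$-target path runs through $u_i$, so $w$ is strictly farther from the target than $u_i$ is, and $u_i$ fails to be maximally distant. Hence no vertex of $G$ lies in $M(G \odot H)$. Conversely, every vertex of every copy is a boundary vertex: since $n \ge 2$ and $m \ge 1$, each $x \in V(H_i)$ is mutually maximally distant with any vertex of another copy, by the cross-copy claim below. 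Thus $M(G \odot H) = \bigcup_{i=1}^{n} V(H_i)$ and $|M(G \odot H)| = nm$.

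The heart of the proof is the cross-copy claim: for $x \in V(H_i)$ and $y \in V(H_j)$ with $i \ne j$, the vertices $x$ and $y$ are mutually maximally distant. Here I would use that every $x$-to-$y$ walk must pass through both $u_i$ and $u_j$, so $d_{G \odot H}(x,y) = d_G(u_i, u_j) + 2$. The neighbors of $x$ are $u_i$, which satisfies $d_{G \odot H}(u_i,y) = d_G(u_i,u_j)+1 < d_{G \odot H}(x,y)$, and the $H_i$-neighbors of $x$, each of which lies at distance exactly $d_G(u_i,u_j)+2$ from $y$; hence no neighbor of $x$ is farther from $y$ than $x$ is, so $x$ is maximally distant from $y$, and by symmetry $y$ is maximally distant from $x$. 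Consequently the complete $n$-partite graph on the parts $V(H_1), \ldots, V(H_n)$ is a subgraph of $(G \odot H)_{SR}$ having the same vertex set $M(G \odot H)$; being $\big((n-1)m\big)$-regular with $(n-1)m \ge 1$, it is the desired regular spanning subgraph. Corollary~\ref{v_reg_sub} then yields $sdim_f(G \odot H) = \frac{1}{2}|M(G \odot H)| = \frac{nm}{2}$.

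The only genuinely delicate point I anticipate is the cross-copy maximal-distance verification — in particular checking that no neighbor of $x$ is strictly farther from $y$ than $x$ is — since this is exactly where the corona structure (each copy hanging off a single vertex $u_i$) is used; the remaining steps are bookkeeping. I also note that the lower bound can be obtained directly rather than through Corollary~\ref{v_reg_sub}: summing the inequalities $g(x)+g(y) \ge 1$ over all cross-copy mutually-maximally-distant pairs, and using that each copy vertex occurs in $(n-1)m$ of them, gives $g\big(\bigcup_{i} V(H_i)\big) \ge \frac{nm}{2}$, which matches the upper bound $\frac{1}{2}|M(G \odot H)| \le \frac{nm}{2}$ from Proposition~\ref{uppersdimF}.
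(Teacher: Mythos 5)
Your proof is correct, but it is packaged differently from the paper's. Both arguments hinge on the same structural fact --- that any two vertices lying in distinct copies $H_i$, $H_j$ are mutually maximally distant --- which the paper simply asserts as ``clear from the construction'' and which you verify carefully via the distance computation $d_{G\odot H}(x,y)=d_G(u_i,u_j)+2$. From there the routes diverge. You determine $M(G\odot H)$ exactly (showing each $u_i\in V(G)$ is a cut-vertex and hence never maximally distant from anything, so $M(G\odot H)=\bigcup_i V(H_i)$), observe that the cross-copy MMD pairs form a spanning $\bigl((n-1)m\bigr)$-regular complete $n$-partite subgraph of $(G\odot H)_{SR}$, and invoke Corollary~\ref{v_reg_sub}, which delivers both bounds at once (the upper bound coming through Proposition~\ref{uppersdimF}). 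The paper instead proves the lower bound by a direct double count over cross-copy pairs --- $(n-1)m\sum_i g(V(H_i))\ge \binom{n}{2}m^2$, combined with Lemma~\ref{lem_corona} --- and the upper bound by exhibiting the explicit function equal to $\frac{1}{2}$ on every copy vertex and $0$ on $V(G)$, asserting (without verification) that it is a strong resolving function. The two countings are mathematically equivalent, but your version buys two things: you never have to check that an explicit function strongly resolves all pairs (including pairs involving vertices of $G$, the step the paper glosses over), and you obtain as by-products the exact boundary set $M(G\odot H)$ and a large regular subgraph of $(G\odot H)_{SR}$. The paper's version is more elementary in that it needs neither the exact determination of $M(G\odot H)$ nor the cut-vertex exclusion beyond Observation~\ref{observation-1}(a); indeed, your closing remark, which sums $g(x)+g(y)\ge 1$ over cross-copy pairs and appeals to Proposition~\ref{uppersdimF}, essentially reconstructs the paper's argument.
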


\begin{proof}
Let $G$ be a connected graph of order $n \ge 2$, and let $H$ be a graph of order $m$. Identify $G \odot H$ with $G \cup H_1 \cup H_2 \cup \cdots \cup H_n$ (each $H_i$ being a copy of $H$), along with the requisite, additional edges.

First, notice that if $x \in V(H_i)$ and $y \in V(H_j)$ where $i\neq j$, then $x$ MMD $y$ in ${G \odot H}$: this is clear from the construction of the corona product.
Let $g: V(G \odot H) \rightarrow [0,1]$ be a strong resolving function of $G \odot H$. Then $g(x)+g(y) \ge 1$ for $x,y$ not contained in the same $H_i$. Notice that for each fixed $x\in V(H_i)$, there are $(n-1)m$ distinct  $y$'s and their corresponding inequalities. On the other hand, the number of pairs $x,y$ where $\{x,y\}\not\subseteq V(H_i)$ (for the same $i$) is clearly ${n\choose 2}m^2$. We thus have $(n-1)m \sum_{i=1}^{n} g(V(H_i)) \ge {n \choose 2}m^2$. Combining the last inequality with the fact $g(V(G\odot H))=\sum_{i=1}^{n} g(V(H_i))$ as indicated by Lemma~\ref{lem_corona}(b), we conclude $sdim_f(G \odot H) \ge \frac{nm}{2}$. 

Since the function $g$, defined by $g(w)=\frac{1}{2}$ for each $w \in \cup_{i=1}^{n}V(H_i)$ and $g(u)=0$ for each $u \in V(G)$, is a strong resolving function of $G \odot H$, we conclude that $sdim_f(G \odot H)=\frac{nm}{2}$.~\hfill
\end{proof}

It's noteworthy that the result of Proposition~\ref{corona_equation} depends only on all $H_i$'s having the same order; i.e., the adjacency structure of $H_i$ is immaterial. Next, we consider $sdim_f(K_1 \odot H)$ when $H$ is a connected graph.

\begin{proposition}
If $H$ is a connected graph, then $sdim_f(H) \le sdim_f(K_1 \odot H) \le \frac{1}{2}(1+|V(H)|)$ and both bounds are sharp.
\end{proposition}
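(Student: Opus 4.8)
The plan is to get the upper bound for free from Theorem~\ref{sdimbounds}, to derive the lower bound from the monotonicity Lemma~\ref{sr_subgraph}, and then to pin down sharpness with two explicit families.

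For the upper bound, I would simply note that since $H$ is connected, $K_1 \odot H$ is a connected graph of order $1+|V(H)|$ (the apex vertex $u$ is joined to all of $H$). Theorem~\ref{sdimbounds} then gives $sdim_f(K_1 \odot H) \le \frac{1}{2}(1+|V(H)|)$ with no further work.

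For the lower bound, the strategy is to show $H_{SR} \subseteq (K_1 \odot H)_{SR}$ and invoke Lemma~\ref{sr_subgraph}. The key structural fact is that $u$ is universal, so $d_{K_1\odot H}(a,b)\le 2$ for all $a,b\in V(H)$, with $d_{K_1\odot H}(a,b)=1$ exactly when $ab\in E(H)$ and $d_{K_1\odot H}(a,b)=2$ otherwise; in particular $diam(K_1\odot H)\le 2$. The heart of the argument is the claim that every MMD pair $\{x,y\}$ of $H$ remains an MMD pair of $K_1\odot H$. I would split into two cases. If $xy\notin E(H)$, then $d_{K_1\odot H}(x,y)=2=diam(K_1\odot H)$, so $x$ and $y$ are automatically mutually maximally distant in $K_1\odot H$. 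If $xy\in E(H)$, then the fact that $x$ is maximally distant from $y$ in $H$ forces $d_H(w,y)\le d_H(x,y)=1$ for every $w\in N_H(x)$, i.e.\ $N_H(x)\subseteq N_H[y]$; together with the symmetric inclusion and $xy\in E(H)$ this yields $N_H[x]=N_H[y]$, so $x$ and $y$ are \emph{true twins} in $H$. Adjoining $u$ to both closed neighborhoods preserves the equality $N_{K_1\odot H}[x]=N_{K_1\odot H}[y]$, so $x,y$ are true twins, hence MMD, in $K_1\odot H$. This gives the edge inclusion $E(H_{SR})\subseteq E((K_1\odot H)_{SR})$; applying the same claim to any $x\in M(H)$ (which is MMD with some $y$ in $H$) gives the vertex inclusion $M(H)\subseteq M(K_1\odot H)$. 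Hence $H_{SR}\subseteq (K_1\odot H)_{SR}$, and Lemma~\ref{sr_subgraph} yields $sdim_f(H)\le sdim_f(K_1\odot H)$.

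For sharpness, I would take $H=K_m$ for the upper bound: then $K_1\odot K_m=K_{m+1}$ is vertex-transitive, so Theorem~\ref{v_transitive} gives $sdim_f(K_1\odot K_m)=\frac{m+1}{2}=\frac{1}{2}(1+|V(K_m)|)$. For the lower bound I would take $H=C_k$ with $k\ge 4$: here $K_1\odot C_k$ is the wheel $W_{k+1}$ on $k+1\ge 5$ vertices, and Theorem~\ref{sdimFthm}(c),(d) give $sdim_f(C_k)=\frac{k}{2}=\frac{1}{2}\big((k+1)-1\big)=sdim_f(W_{k+1})=sdim_f(K_1\odot C_k)$. The main obstacle is the lower bound, and within it the case $xy\in E(H)$: one must observe that an \emph{adjacent} MMD pair of $H$ is necessarily a pair of true twins and that this property survives the corona construction. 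The non-adjacent case is immediate once the diameter of $K_1\odot H$ is pinned at $2$.
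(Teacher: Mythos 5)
Your proof is correct and follows essentially the same route as the paper's: the upper bound from Theorem~\ref{sdimbounds}, the lower bound by showing every MMD pair of $H$ stays MMD in $K_1\odot H$ (non-adjacent pairs become diametral at distance $2$; adjacent MMD pairs are true twins, a property preserved by adding the universal vertex) and then invoking Lemma~\ref{sr_subgraph}, and the identical sharpness examples $H=C_k$ ($k\ge 4$, giving wheels) and $H=K_m$. The only difference is cosmetic: you spell out why an adjacent MMD pair must consist of true twins, a step the paper asserts without detail.
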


\begin{proof}
The upper bound follows from Theorem~\ref{sdimbounds}. For the lower bound, it suffices to show that each MMM pair in 
$H$ is also an MMD pair in $K_1 \odot H$, for then $H_{SR}\subseteq (K_1\odot H)_{SR}$ and Lemma~\ref{sr_subgraph} applies. 

Let $x,y\in V(H)$ be an MMD pair in $H$. If $d_H(x,y)\geq 2$, then $x$ MMD $y$ in $K_1\odot H$, since $d_{K_1\odot H}(x,y)=\mbox{min}\{2, d_H(x,y)\}$. (Note that a diametral pair of vertices is obviously an MMD pair.) If $d_H(x,y)=1$, then $x$ MMD $y$ in $H$ implies that $N_{H}[x]=N_{H}[y]$. The construction of $K_1\odot H$ ensures that $N_{K_1\odot H}[x]=N_{K_1\odot H}[y]$, which in turn implies that $x$ MMD $y$ in $K_1\odot H$. 
 
The lower bound is sharp: if $H$ is a cycle $C_n$ where $n\geq 4$, then $sdim_f(H)=\frac{|V(H)|}{2}=sdim_f(K_1 \odot H)$ by parts (c) and (d) of Theorem~\ref{sdimFthm}, noting that $K_1\odot C_n$ is $W_{n+1}$; for 
another example, if $H$ is the house graph (see Figure~\ref{fig_house}), then $H_{SR}=(K_1 \odot H)_{SR} \cong P_5$ and $\alpha(P_5)=2$, and thus $sdim_f(K_1 \odot H)=sdim_f(H)=2=sdim(H)<\frac{|V(H)|}{2}$ by Theorem~\ref{v_cover} and Corollary~\ref{cor_bipartite}.

The upper bound is also sharp: if $H=K_m$, then $K_1 \odot H \cong K_{m+1}$ and $sdim_f(K_1 \odot 
H)=sdim_f(K_{m+1})=\frac{m+1}{2}=\frac{1}{2}(1+|V(H)|)$ by Theorem~\ref{v_transitive}.~\hfill
\end{proof}

\begin{figure}[ht]
\centering
\begin{tikzpicture}[scale=.65, transform shape]
\node [draw, shape=circle] (u1) at  (-2,1) {};
\node [draw, shape=circle] (u2) at  (-3,0) {};
\node [draw, shape=circle] (u3) at  (-3,-1.5) {};
\node [draw, shape=circle] (u4) at  (-1,-1.5) {};
\node [draw, shape=circle] (u5) at  (-1,0) {};

\node [draw, shape=circle] (v) at  (4,0) {};
\node [draw, shape=circle] (u11) at  (7,1) {};
\node [draw, shape=circle] (u22) at  (6,0) {};
\node [draw, shape=circle] (u33) at  (6,-1.5) {};
\node [draw, shape=circle] (u44) at  (8,-1.5) {};
\node [draw, shape=circle] (u55) at  (8,0) {};

\node [draw, shape=circle] (u111) at  (14,1) {};
\node [draw, shape=circle] (u333) at  (13,0) {};
\node [draw, shape=circle] (u555) at  (13,-1.5) {};
\node [draw, shape=circle] (u222) at  (15,-1.5) {};
\node [draw, shape=circle] (u444) at  (15,0) {};

\node [scale=1.4] at (-2,1.5) {$u_1$};
\node [scale=1.4] at (-3.5,0) {$u_2$};
\node [scale=1.4] at (-3.5,-1.5) {$u_3$};
\node [scale=1.4] at (-.5,-1.5) {$u_4$};
\node [scale=1.4] at (-.5,0) {$u_5$};

\node [scale=1.4] at (3.5,0) {$v$};
\node [scale=1.4] at (7,1.5) {$u_1$};
\node [scale=1.4] at (5.5,-0.3) {$u_2$};
\node [scale=1.4] at (5.5,-1.5) {$u_3$};
\node [scale=1.4] at (8.5,-1.5) {$u_4$};
\node [scale=1.4] at (8.5,0) {$u_5$};

\node [scale=1.4] at (14,1.5) {$u_1$};
\node [scale=1.4] at (12.5,0) {$u_3$};
\node [scale=1.4] at (12.5,-1.5) {$u_5$};
\node [scale=1.4] at (15.5,-1.5) {$u_2$};
\node [scale=1.4] at (15.5,0) {$u_4$};

\node [scale=1.4] at (-2,-3.5) {\large $H$};
\node [scale=1.4] at (6.5,-3.5) {\large $K_1 \odot H$};
\node [scale=1.4] at (14,-3.5) {\large $H_{SR}=(K_1 \odot H)_{SR}$};

\draw(u1)--(u2)--(u3)--(u4)--(u4)--(u5)--(u1);
\draw(u2)--(u5);
\draw(v)--(u11)--(u55)--(u44)--(u33)--(v);
\draw(v).. controls (7,3).. (u55);
\draw(v).. controls (5,-3).. (u44);
\draw(v)--(u22)--(u55);
\draw(u11)--(u22)--(u33);
\draw(u555)--(u333)--(u111)--(u444)--(u222);
\end{tikzpicture}
\caption{The house graph $H$, $K_1 \odot H$, $H_{SR}$, and $(K_1 \odot H)_{SR}$.}\label{fig_house}
\end{figure}
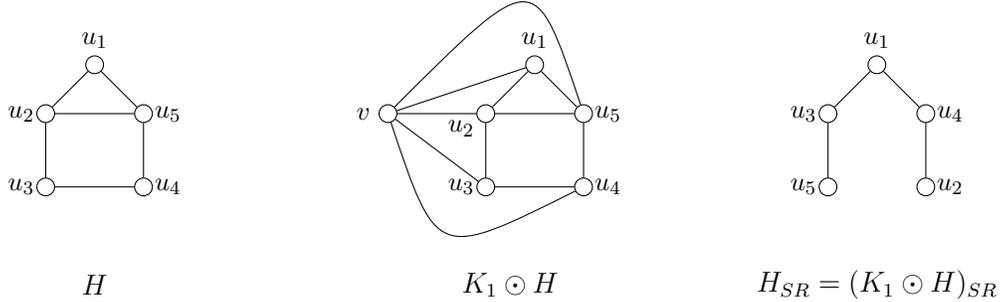

Next, we consider $sdim_f(K_1 \odot H)$ when $H$ is a disconnected graph. We first recall the following

\begin{theorem}\emph{\cite{matching_k}}\label{mk}
For $k \ge 2$, let $G=K_{a_1, a_2, \ldots, a_k}$ be a complete $k$ partite graph of order $n=\sum_{i=1}^{k}a_i$ with 
$a_k=\max\{a_i: 1 \le i \le k\}$. Then $\nu(G)=\min\{n-a_k, \lfloor\frac{n}{2}\rfloor\}$.
\end{theorem}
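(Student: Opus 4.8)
The plan is to establish the two inequalities $\nu(G)\le\min\{n-a_k,\lfloor n/2\rfloor\}$ and $\nu(G)\ge\min\{n-a_k,\lfloor n/2\rfloor\}$ separately, writing $V_1,\dots,V_k$ for the parts with $|V_j|=a_j$ and $V=\bigcup_{j}V_j$. For the upper bound, two elementary observations suffice. First, any matching has at most $\lfloor n/2\rfloor$ edges, since each edge consumes two of the $n$ vertices. Second, since every edge of $G$ joins two distinct parts, each edge has at least one endpoint outside the largest part $V_k$; as the edges of a matching are pairwise disjoint, these endpoints are distinct, so a matching has at most $|V\setminus V_k|=n-a_k$ edges. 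Together these give $\nu(G)\le\min\{n-a_k,\lfloor n/2\rfloor\}$.

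For the lower bound I would split into two cases according to whether $2a_k>n$ or $2a_k\le n$. If $2a_k>n$, then $n-a_k<a_k$ and $n-a_k\le\lfloor n/2\rfloor$, so the target value is $n-a_k$; here I would greedily match each of the $n-a_k$ vertices of $V\setminus V_k$ to a distinct vertex of $V_k$. This is possible because $|V_k|=a_k\ge n-a_k$ and every vertex outside $V_k$ is adjacent to every vertex of $V_k$, which yields a matching of size $n-a_k$.

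The main work lies in the remaining case $2a_k\le n$, where the target value is $\lfloor n/2\rfloor$ and I must exhibit a near-perfect matching. My plan is to order the vertices as $w_0,w_1,\dots,w_{n-1}$ by listing the parts in blocks, so that each $V_j$ occupies a set of $a_j$ consecutive indices; note that $2a_k\le n$ forces $a_j\le a_k\le\lfloor n/2\rfloor$ for every $j$. Writing $d=\lfloor n/2\rfloor$, I would pair $w_i$ with $w_{i+d}$ for $i=0,1,\dots,d-1$. Each such pair lies in two different parts: if $w_i$ and $w_{i+d}$ belonged to one block of length $a_j$, then $d=(i+d)-i\le a_j-1\le d-1$, a contradiction. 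Hence every pair forms an edge, and since the indices $0,\dots,2d-1$ are distinct and at most $n-1$, this is a matching of size $d=\lfloor n/2\rfloor$.

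The crux is precisely this near-perfect matching in the balanced case, and I expect the block-pairing step---verifying that the ``antipodal'' vertices never fall in the same part---to be the one requiring care. An alternative route to the same conclusion is the Berge--Tutte formula: for a complete multipartite graph, deleting a set $S$ either leaves a connected graph (hence at most one odd component) or isolates the vertices of a single surviving part, and one checks that vertices of two different parts cannot be isolated simultaneously; consequently the deficiency $\max_{S}\bigl(o(G-S)-|S|\bigr)$ equals $\max\{\,2a_k-n,\ [\,n\text{ odd}\,]\,\}$, which gives $\nu(G)=\tfrac12(n-\mathrm{def})=\min\{n-a_k,\lfloor n/2\rfloor\}$ directly. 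I would nonetheless keep the explicit construction above, since it is entirely self-contained.
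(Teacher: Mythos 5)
Your proof is correct, but note that the paper offers no proof of this statement at all: it is imported verbatim from Sitton's paper \cite{matching_k} as a known result, so there is no internal argument to compare yours against. On its own merits, your argument is sound. The upper bound (each matching edge uses two vertices, and each has an endpoint outside the largest part $V_k$) is exactly right. In the unbalanced case $2a_k>n$ the greedy pairing of all of $V\setminus V_k$ into $V_k$ works because the graph is complete multipartite. In the balanced case $2a_k\le n$, the ``antipodal'' pairing $w_i \leftrightarrow w_{i+d}$ with $d=\lfloor n/2\rfloor$ is the crux, and your verification is airtight: a block of consecutive indices of length $a_j\le d$ cannot contain two indices differing by exactly $d$, so every pair crosses parts, and the $2d$ indices used are distinct. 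This yields $\nu(G)\ge\lfloor n/2\rfloor$ as required, and together with the upper bound gives the stated formula. The Berge--Tutte sketch you add is also a legitimate alternative (the only sets $S$ producing more than one odd component are those swallowing all but one part, giving deficiency $\max\{2a_k-n,\,n\bmod 2\}$), though as written it is a sketch; the explicit construction is the part that makes your write-up self-contained, and it is the one I would keep.
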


\begin{proposition}\label{disconnectedH}
Let $H$ be a disconnected graph of order $m$ such that $H$ consists of a disjoint union of graphs $H^1, H^2, \ldots, H^k$ of 
order $a_1,a_2, \ldots, a_k$, respectively, and let $a_k=\max\{a_i: 1 \le i \le k\}$. Then $ \min\{m-a_k, 
\lfloor\frac{m}{2}\rfloor\} \le sdim_f(K_1 \odot H) \le \frac{m}{2}$.
\end{proposition}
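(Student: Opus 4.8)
The plan is to analyze the strong resolving graph $(K_1 \odot H)_{SR}$ and apply the matching-based lower bound of Proposition~\ref{matchingN} together with the upper bound of Proposition~\ref{uppersdimF}. First I would understand the structure of $K_1 \odot H$: we adjoin a single apex vertex $v$ adjacent to every vertex of $H$, so that for any two vertices $x, y$ in distinct components $H^s$ and $H^t$ we have $d_{K_1 \odot H}(x,y) = 2$ (through $v$), while within a component the distance is $\min\{2, d_{H^s}(x,y)\}$. The apex $v$ is a cut-vertex whenever $H$ is disconnected, so by Observation~\ref{observation-1}(a) it contributes $0$ to any minimum strong resolving function and, more to the point, $v \notin M(K_1 \odot H)$.

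The key structural claim I would establish is that in $K_1 \odot H$, every pair $x \in V(H^s)$, $y \in V(H^t)$ with $s \neq t$ is an MMD pair. Indeed, such $x, y$ are at distance $2$, which is the diameter of $K_1 \odot H$, and a diametral pair is automatically mutually maximally distant. This shows that the complete multipartite graph $K_{a_1, a_2, \ldots, a_k}$ on the vertex set $V(H)$, with parts given by the components $V(H^1), \ldots, V(H^k)$, is a subgraph of $(K_1 \odot H)_{SR}$ spanning all of $V(H)$. Consequently $\nu((K_1 \odot H)_{SR}) \ge \nu(K_{a_1, a_2, \ldots, a_k})$, and Theorem~\ref{mk} evaluates the latter as $\min\{m - a_k, \lfloor \frac{m}{2} \rfloor\}$. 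Combining this with Proposition~\ref{matchingN} yields the lower bound $sdim_f(K_1 \odot H) \ge \min\{m - a_k, \lfloor \frac{m}{2} \rfloor\}$.

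For the upper bound, the cleanest route is via Proposition~\ref{uppersdimF}, which gives $sdim_f(K_1 \odot H) \le \frac{1}{2}|M(K_1 \odot H)|$. Since $v \notin M(K_1 \odot H)$ as noted above, we have $M(K_1 \odot H) \subseteq V(H)$, so $|M(K_1 \odot H)| \le m$ and the bound $sdim_f(K_1 \odot H) \le \frac{m}{2}$ follows immediately. Alternatively, one could exhibit the explicit strong resolving function assigning $\frac{1}{2}$ to each vertex of $V(H)$ and $0$ to $v$, and verify directly that every pair $x, y$ has $g(S\{x,y\}) \ge 1$; but invoking Proposition~\ref{uppersdimF} after the observation $v \notin M(K_1 \odot H)$ is shorter.

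The main obstacle I anticipate is pinning down $M(K_1 \odot H)$ precisely enough for the upper bound, and in particular confirming that $v$ is not a boundary vertex. This requires checking that $v$ cannot be MMD with any $w \in V(H)$: since $v$ is adjacent to all of $V(H)$ and has a neighbor (any vertex in a component not containing $w$) at distance $2$ from $w$ while $d(v,w)=1$, the vertex $v$ fails to be maximally distant from $w$, so $v \notin M(K_1 \odot H)$. A secondary subtlety is that the multipartite subgraph I identify need not equal all of $(K_1 \odot H)_{SR}$—there may be additional MMD pairs within a component $H^t$ arising from true or false twins in $H$—but this only enlarges $(K_1 \odot H)_{SR}$, which can only increase $\nu$, so the lower bound is unaffected and no exact determination of the full strong resolving graph is needed.
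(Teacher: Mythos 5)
Your proof is correct and takes essentially the same approach as the paper: both show that any two vertices in distinct components of $H$ form an MMD pair, so $K_{a_1,a_2,\ldots,a_k}$ is a subgraph of $(K_1\odot H)_{SR}$, and then combine Proposition~\ref{matchingN} with Theorem~\ref{mk} for the lower bound. For the upper bound the paper invokes Observation~\ref{observation-1}(a) via the cut-vertex $v$, while you exclude $v$ from $M(K_1\odot H)$ and apply Proposition~\ref{uppersdimF}; this is the same underlying idea (the apex does not count), and your version is spelled out at least as carefully.
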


\begin{proof}
Since $K_1 \odot H$ contains a cut-vertex by disconnectedness of $H$, the upper bound follows from Observation~\ref{observation-1}(a).

If $x \in V(H^i)$ and $y \in V(H^j)$ for $i \neq j$, then $x$ and $y$ form an MMD pair in $K_1 \odot H$. So, $(K_1 \odot H)_{SR}$ contains $K_{a_1, a_2, \ldots, a_k}$ as a subgraph. Thus, the lower bound follows from Proposition~\ref{matchingN}, Lemma~\ref{sr_subgraph}, and Theorem~\ref{mk}.~\hfill
\end{proof}

\begin{remark}
(a) There exists a disconnected graph $H$ achieving the upper bound of Proposition~\ref{disconnectedH}. If $H=\overline{K}_m$, then $K_1 \odot H \cong K_{1,m}$ and $sdim_f(K_1 \odot H)=sdim_f(K_{1,m})=\frac{m}{2}$ by Theorem~\ref{sdimFthm}(a); moreover, if $m$ is even, $sdim_f(K_1 \odot H)$ equals both the upper and lower bound of Proposition~\ref{disconnectedH}.  

(b) There is a disconnected graph $H$ that does not achieve the upper bound of Proposition~\ref{disconnectedH}. Let $H^1$ be the leftmost graph given in Figure~\ref{fig_coronaLB} and let $H=H^1 \cup K_1$. One can readily check that $(H^1)_{SR}$ and $(K_1 \odot H)_{SR}$ are as drawn in Figure~\ref{fig_coronaLB}. Since $\{v, u_1, u_3\}$ is a minimum vertex cover of $(K_1 \odot H)_{SR}$, $\alpha((K_1 \odot H)_{SR})=3$. Since $\{u_1u_4, u_3u_6, vu_2\}$ is a maximum matching of $(K_1 \odot H)_{SR}$, $\nu((K_1 \odot H)_{SR})=3$. By Observation~\ref{observation}, Theorem~\ref{v_cover}, and Proposition~\ref{matchingN}, we have $sdim_f(K_1 \odot H)=sdim(K_1 \odot H)=3=\lfloor\frac{|V(H)|}{2}\rfloor<\frac{|V(H)|}{2}$. The problem of finding an example achieving the lower bound of Proposition~\ref{disconnectedH} still remains.
\end{remark}

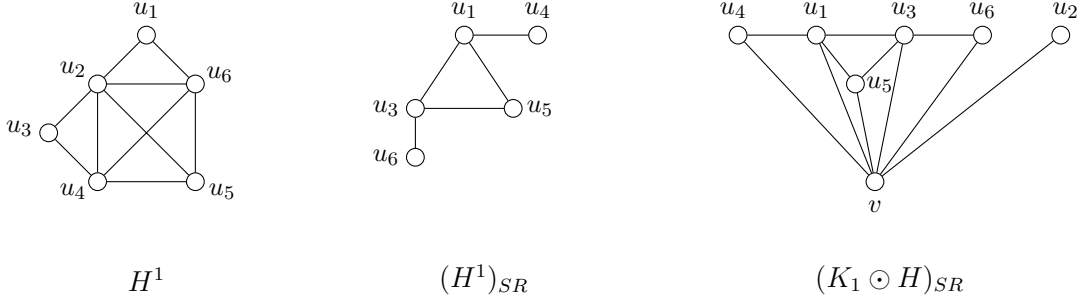
\begin{figure}[ht]
\centering
\begin{tikzpicture}[scale=.65, transform shape]
\node [draw, shape=circle] (u1) at  (-2.5,3) {};
\node [draw, shape=circle] (u2) at  (-3.5,2) {};
\node [draw, shape=circle] (u3) at  (-4.5,1) {};
\node [draw, shape=circle] (u4) at  (-3.5,0) {};
\node [draw, shape=circle] (u5) at  (-1.5,0) {};
\node [draw, shape=circle] (u6) at  (-1.5,2) {};

\node [draw, shape=circle] (u11) at  (4,3) {};
\node [draw, shape=circle] (u33) at  (3,1.5) {};
\node [draw, shape=circle] (u55) at  (5,1.5) {};
\node [draw, shape=circle] (u44) at  (5.5,3) {};
\node [draw, shape=circle] (u66) at  (3,0.5) {};

\node [draw, shape=circle] (u444) at  (9.6,3) {};
\node [draw, shape=circle] (u111) at  (11.2,3) {};
\node [draw, shape=circle] (u333) at  (13,3) {};
\node [draw, shape=circle] (u666) at  (14.6,3) {};
\node [draw, shape=circle] (u555) at  (12,2) {};
\node [draw, shape=circle] (u222) at  (16.2,3) {};
\node [draw, shape=circle] (v) at  (12.4,0) {};

\node [scale=1.4] at (-2.5,3.5) {$u_1$};
\node [scale=1.4] at (-4,2.3) {$u_2$};
\node [scale=1.4] at (-5.1,1.1) {$u_3$};
\node [scale=1.4] at (-4,-0.2) {$u_4$};
\node [scale=1.4] at (-0.95,-0.2) {$u_5$};
\node [scale=1.4] at (-1,2.2) {$u_6$};

\node [scale=1.4] at (5.5,3.5) {$u_4$};
\node [scale=1.4] at (4,3.5) {$u_1$};
\node [scale=1.4] at (2.4,1.5) {$u_3$};
\node [scale=1.4] at (2.4,0.5) {$u_6$};
\node [scale=1.4] at (5.55,1.5) {$u_5$};

\node [scale=1.4] at (9.5,3.5) {$u_4$};
\node [scale=1.4] at (11.2,3.5) {$u_1$};
\node [scale=1.4] at (13,3.5) {$u_3$};
\node [scale=1.4] at (14.6,3.5) {$u_6$};
\node [scale=1.4] at (16.3,3.5) {$u_2$};
\node [scale=1.4] at (12.5,2) {$u_5$};
\node [scale=1.4] at (12.4,-0.5) {$v$};

\node [scale=1.4] at (-2.5,-2) {\large $H^1$};
\node [scale=1.4] at (4.4,-2) {\large $(H^1)_{SR}$};
\node [scale=1.4] at (12.7,-2) {\large $(K_1 \odot H)_{SR}$};

\draw(u1)--(u2)--(u3)--(u4)--(u5)--(u6)--(u1);
\draw(u2)--(u4)--(u6)--(u2)--(u5);
\draw(u11)--(u33)--(u55)--(u11);
\draw(u11)--(u44);
\draw(u33)--(u66);
\draw(u444)--(u111)--(u333)--(u666)--(v)--(u444);
\draw(u111)--(u555)--(u333)--(v)--(u111);
\draw(u555)--(v)--(u222);
\end{tikzpicture}
\caption{The graphs $H^1$, $(H^1)_{SR}$, and $(K_1 \odot H)_{SR}$.}\label{fig_coronaLB}
\end{figure}


\section{Lexicographic product graphs}

The \emph{lexicographic product} of two graphs $G$ and $H$, denoted by $G[H]$, is the graph with the vertex set $V(G) \times 
V(H)$ such that $(u,v)$ is adjacent to $(u', v')$ if and only if either $uu' \in E(G)$, or $u=u'$ and $vv' \in E(H)$. Let 
$G$ be a connected graph of order at least two, and let $H$ be a graph of order at least two. We state the following 
observation from~\cite{lexi_dimF} that, for two distinct vertices $x=(x_1,x_2)$ and $y=(y_1, y_2)$ in $G[H]$,
\begin{equation*}
d_{G[H]}(x,y)=\left\{
\begin{array}{ll}
1 & \mbox{ if } x_1=y_1 \mbox{ and } x_2y_2 \in E(H),\\
2 & \mbox{ if } x_1=y_1 \mbox{ and } x_2y_2 \not\in E(H),\\
d_G(x_1,y_1) & \mbox{ if } x_1 \neq y_1.
\end{array}\right.
\end{equation*}

Next, we recall the following useful result that will be used in computing $sdim_f(G[H])$.

\begin{lemma}\emph{\cite{lexi_sdim}}\label{lexi_ttwin}
Let $G$ be a connected graph of order at least two, and let $H$ be a graph of order at least two. Let $x=(x_1,x_2)$ and 
$y=(y_1, y_2)$ be two vertices in $G[H]$.
\begin{itemize}
\item[(a)] If $N_{G}[x_1] \neq N_{G}[y_1]$, then $x$ MMD $y$ in $G[H]$ if and only if $x_1$ MMD $y_1$ in $G$.
\item[(b)] If $N_{G}[x_1]=N_{G}[y_1]$ for $x_1 \neq y_1$, then $x$ MMD $y$ in $G[H]$ if and only if 
    $\deg_{H}(x_2)=\deg_{H}(y_2)=|V(H)|-1$.
\item[(c)] If $x_1=y_1$, then  $x$ MMD $y$ in $G[H]$ if and only if $d_H(x_2,y_2) \ge 2$ or $N_{H}[x_2]=N_{H}[y_2]$.
\end{itemize}
\end{lemma}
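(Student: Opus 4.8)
The plan is to unwind the definition of mutual maximal distance directly against the displayed distance formula for $G[H]$, organizing everything around the two types of neighbors that a vertex $x=(x_1,x_2)$ has in $G[H]$: the \emph{internal} neighbors $(x_1,x_2')$ with $x_2'\in N_H(x_2)$, and the \emph{external} neighbors $(x_1',z)$ with $x_1'\in N_G(x_1)$ and $z\in V(H)$ arbitrary. Recall that $x$ is maximally distant from $y=(y_1,y_2)$ precisely when $d_{G[H]}(w,y)\le d_{G[H]}(x,y)$ for every such neighbor $w$. I would first record, once and for all, how each neighbor type behaves: an internal neighbor keeps the first coordinate $x_1$ fixed, so its distance to $y$ equals $d_G(x_1,y_1)$ when $x_1\ne y_1$ and is governed by the $H$-adjacency to $y_2$ when $x_1=y_1$; an external neighbor has first coordinate $x_1'\in N_G(x_1)$, so its distance to $y$ equals $d_G(x_1',y_1)$ unless $x_1'=y_1$, in which case it is at most $1$ iff $z\in N_H[y_2]$ and equals $2$ otherwise. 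Once this bookkeeping is in place, each of (a), (b), (c) becomes a routine comparison of these distances against $d_{G[H]}(x,y)$, combined with the symmetric condition obtained by interchanging $x$ and $y$.

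For part (a), the hypothesis $N_G[x_1]\ne N_G[y_1]$ forces $x_1\ne y_1$, so $d_{G[H]}(x,y)=d_G(x_1,y_1)$. For the forward direction I would argue contrapositively: if some $x_1'\in N_G(x_1)$ is strictly farther from $y_1$ than $x_1$ is, then the external neighbor $(x_1',z)$ of $x$ witnesses that $x$ is not maximally distant from $y$; hence $x$ maximally distant from $y$ forces $x_1$ maximally distant from $y_1$, and symmetrically, giving $x_1$ MMD $y_1$. For the converse I would use the observation that an \emph{adjacent} MMD pair in $G$ is necessarily a pair of true twins (satisfying $N_G[x_1]=N_G[y_1]$), which is excluded in case (a); thus $x_1$ MMD $y_1$ forces $d_G(x_1,y_1)\ge 2$, which is exactly what guarantees $y_1\notin N_G(x_1)$, so no external neighbor of $x$ lands in the copy of $H$ over $y_1$ and the distance bookkeeping closes.

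For part (b), true twins $x_1,y_1$ are adjacent, so $d_{G[H]}(x,y)=1$; the only neighbors of $x$ that can exceed distance $1$ from $y$ are the external neighbors $(y_1,z)$ over $y_1$, and these remain within distance $1$ of $(y_1,y_2)$ for \emph{every} $z$ exactly when $N_H[y_2]=V(H)$, i.e. $\deg_H(y_2)=|V(H)|-1$. Pairing this with the symmetric statement for $x_2$ yields the claimed equivalence. For part (c), with $x_1=y_1$, all external neighbors sit at distance $1$ from $y$, so maximal distance is controlled entirely by the internal neighbors inside the copy over $x_1$. I would split on whether $x_2y_2\in E(H)$: when $x_2y_2\notin E(H)$ the $G[H]$-distance is $2$ and every neighbor is automatically within distance $2$, so $x$ MMD $y$ (the $d_H(x_2,y_2)\ge 2$ disjunct); when $x_2y_2\in E(H)$ the distance is $1$ and the maximal-distance condition reduces to $N_H(x_2)\subseteq N_H[y_2]$, which together with its symmetric counterpart is equivalent to $N_H[x_2]=N_H[y_2]$.

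The main obstacle — indeed essentially the only delicate point — is the behavior of the external neighbors that fall into the copy of $H$ over $y_1$ when $d_G(x_1,y_1)=1$. This is exactly the phenomenon that separates cases (a) and (b), and the key to handling it cleanly is the true-twin dichotomy $N_G[x_1]=N_G[y_1]$ versus $N_G[x_1]\ne N_G[y_1]$: in the former these external neighbors force the degree condition on $H$, while in the latter they cannot occur at all for an MMD pair. Everything beyond this reduces to verifying distances against the distance formula already displayed.
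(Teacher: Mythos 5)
Your proposal is correct in all three parts. One important observation first: the paper does not prove this lemma at all --- it is recalled, with citation, from \cite{lexi_sdim} --- so there is no in-paper proof to compare yours against; your argument stands or falls on its own, and it stands. The internal/external neighbor dichotomy measured against the displayed distance formula is exactly the right engine: the contrapositive in (a) (a $G$-neighbor of $x_1$ strictly farther from $y_1$ lifts to an external neighbor of $x$ strictly farther from $y$), the observation that an adjacent MMD pair in $G$ must be a true-twin pair (which is what makes the converse in (a) go through under the hypothesis $N_G[x_1]\neq N_G[y_1]$), the reduction of (b) to whether every vertex $(y_1,z)$ lies within distance $1$ of $y$ (equivalently $N_H[y_2]=V(H)$), and the split in (c) on $x_2y_2\in E(H)$, with the two symmetric containments $N_H(x_2)\subseteq N_H[y_2]$ and $N_H(y_2)\subseteq N_H[x_2]$ combining to $N_H[x_2]=N_H[y_2]$, are all sound. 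The only step you leave implicit is in (b): to justify that external neighbors $(x_1',z)$ with $x_1'\neq y_1$ remain at distance $1$ from $y$, you need $N_G(x_1)\setminus\{y_1\}\subseteq N_G(y_1)$, which is precisely the true-twin hypothesis $N_G[x_1]=N_G[y_1]$; that deserves one explicit line, but it is a routine verification within the bookkeeping you set up, not a gap.
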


Next, we recall a structural description of $(G[H])_{SR}$, when $G$ is true twin-free. We need the following notations introduced in~\cite{lexi_sdim}: Given a graph $H$, denote by $H^*$ the graph with $V(H^*)=V(H)$ and $xy\in E(H^*)$ if and only if either $d_H(x,y)\geq 2$ or $x,y$ are true twins in $H$. Also, denote by $H_{-}$ the graph obtained from $H$ by omitting all isolated vertices of $H$.   

\begin{proposition}\emph{\cite{lexi_sdim}}\label{structure_SRofLex}
Let $G$ be a connected graph of order $n\geq 2$, and let $H$ be a graph of order at least two. Suppose $G$ has no true twin vertices, then 
$$(G[H])_{SR}\cong (G_{SR}[H^*])\cup\bigcup_{i=1}^{n-|M(G)|}(H^*)_{-}.$$
\end{proposition}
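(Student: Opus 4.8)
The plan is to construct an explicit graph isomorphism between $(G[H])_{SR}$ and the claimed disjoint union, driven entirely by the MMD classification in Lemma~\ref{lexi_ttwin}. First I would identify the vertex set: $V((G[H])_{SR})=M(G[H])$ consists of exactly those $(x_1,x_2)$ that are MMD with some vertex in $G[H]$. Using Lemma~\ref{lexi_ttwin}, and recalling that $G$ has no true twins (so case (b) never applies, since $N_G[x_1]=N_G[y_1]$ with $x_1\neq y_1$ would make $x_1,y_1$ true twins), I would argue that for a fixed first coordinate $x_1$, the set of admissible second coordinates splits into two regimes. If $x_1\in M(G)$, then by part (a) every vertex $(x_1,x_2)$ is MMD with some $(y_1,y_2)$ where $x_1$ MMD $y_1$, so all $|V(H)|$ choices of $x_2$ survive; if $x_1\notin M(G)$, then part (a) contributes nothing and only part (c) can produce MMD pairs within the fiber over $x_1$, so $x_2$ survives precisely when it is non-isolated in $H^*$, i.e. $x_2\in V((H^*)_-)$.

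Next I would define the edges of $(G[H])_{SR}$ by combining the three cases of Lemma~\ref{lexi_ttwin}. The edges fall into two types: \emph{inter-fiber} edges joining $(x_1,x_2)$ and $(y_1,y_2)$ with $x_1\neq y_1$, and \emph{intra-fiber} edges joining $(x_1,x_2)$ and $(x_1,y_2)$. By part (a) (the only applicable case for distinct first coordinates, given no true twins), an inter-fiber pair is MMD in $G[H]$ if and only if $x_1$ MMD $y_1$ in $G$, with no constraint on the second coordinates. This is precisely the adjacency rule of the lexicographic product $G_{SR}[H^*]$ when both first coordinates lie in $M(G)=V(G_{SR})$: an edge in the $G_{SR}$ factor forces adjacency regardless of the $H^*$-coordinates. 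By part (c), an intra-fiber pair is MMD if and only if $d_H(x_2,y_2)\geq 2$ or $N_H[x_2]=N_H[y_2]$, which is exactly the edge relation of $H^*$ (true twins in $H$ are adjacent in $H^*$ by definition). I would then check that these intra-fiber $H^*$-edges are correctly accounted for: over a fiber $x_1\in M(G)$ they are the ``$u=u'$'' edges of $G_{SR}[H^*]$, while over a fiber $x_1\notin M(G)$ they form an isolated copy of $H^*$ restricted to its non-isolated vertices, namely $(H^*)_-$. Since there are $n-|M(G)|$ first coordinates outside $M(G)$, this yields the $\bigcup_{i=1}^{n-|M(G)|}(H^*)_-$ summand.

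Assembling these pieces gives the isomorphism: the identity map on coordinates sends fibers over $M(G)$ (together with their inter-fiber edges) onto $G_{SR}[H^*]$, and sends each fiber over a vertex of $V(G)\setminus M(G)$ onto a disjoint copy of $(H^*)_-$. The verification that no edges are missed or spuriously added reduces to confirming that case (b) is vacuous (no true twins), that inter-fiber edges never attach to a fiber outside $M(G)$ (since such an $x_1$ is MMD with no $y_1$), and that the edge rules match exactly.

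The step I expect to be the main obstacle is the careful bookkeeping at the boundary between the two summands — specifically, confirming that a vertex $(x_1,x_2)$ with $x_1\notin M(G)$ is genuinely isolated from every fiber over $M(G)$ and from all other fibers, so that its component is a free-standing copy of $(H^*)_-$ rather than being absorbed into the $G_{SR}[H^*]$ part. This requires using the hypothesis that $G$ has no true twins to rule out part (b) entirely, and then invoking part (a) to see that $x_1\notin M(G)$ forbids all inter-fiber MMD relations for that fiber; only after this is the decomposition clean. A secondary subtlety is ensuring the definition of $H^*$ (where true twins are made adjacent) dovetails precisely with the ``$N_H[x_2]=N_H[y_2]$'' clause of part (c), so that the intra-fiber edge structure is literally $H^*$ and not some variant.
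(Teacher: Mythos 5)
Your proof is correct. Note, however, that the paper itself offers no proof of Proposition~\ref{structure_SRofLex}: it is recalled from~\cite{lexi_sdim}, together with Lemma~\ref{lexi_ttwin}, which is precisely the classification your argument rests on. Your derivation --- ruling out case (b) by true-twin-freeness, using case (a) to show that fibers over $M(G)$ are fully present and joined to one another exactly according to $E(G_{SR})$ (and never to fibers over $V(G)\setminus M(G)$), and using case (c) to identify the intra-fiber edges with $E(H^*)$, so that each fiber over $V(G)\setminus M(G)$ survives only on the non-isolated vertices of $H^*$ and forms a free-standing copy of $(H^*)_{-}$ --- is the canonical argument and supplies exactly what the citation leaves implicit.
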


Based on Lemma~\ref{lexi_ttwin}(c), we have the following

\begin{lemma}\label{lem_k}
Let $G$ and $H$ each be a connected graph of order at least two, with $G$ being connected. If $diam(H) \le 2$ and $sdim_f(H)=\frac{1}{2}|V(H)|$, or $diam(H)>2$ and $sdim_f(K_1 \odot H)=\frac{1}{2}|V(H)|$, then 
$sdim_f(G[H])=\frac{1}{2}|V(G)| \cdot |V(H)|$.
\end{lemma}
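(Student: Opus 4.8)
The plan is to establish both inequalities simultaneously by producing, on the vertex set of $G[H]$, a fixed-point-free bijection whose orbits are MMD pairs, and then invoking Theorem~\ref{sdimbounds}(b). The whole argument is funneled through a single combinatorial property of $H^*$, namely
\[
(\star)\qquad \text{there is } \beta:V(H)\to V(H) \text{ with } \beta(w)\ne w \text{ and } w\beta(w)\in E(H^*) \text{ for every } w.
\]
By Lemma~\ref{lexi_ttwin}(c), two vertices $(u,w)$ and $(u,w')$ sharing the first coordinate are MMD in $G[H]$ precisely when $d_H(w,w')\ge 2$ or $N_H[w]=N_H[w']$, i.e. precisely when $ww'\in E(H^*)$. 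Hence, once $(\star)$ is available, I would define $\alpha:V(G)\times V(H)\to V(G)\times V(H)$ by $\alpha(u,w)=(u,\beta(w))$; this is a fixed-point-free bijection, each pair $(u,w),\alpha(u,w)$ is MMD in $G[H]$, so $S\{(u,w),\alpha(u,w)\}=\{(u,w),\alpha(u,w)\}$ by Observation~\ref{observation-1}(b), and Theorem~\ref{sdimbounds}(b) yields $sdim_f(G[H])=\frac{1}{2}|V(G)|\cdot|V(H)|$ at once. Thus the entire task reduces to deriving $(\star)$ from the two hypotheses.

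In the case $diam(H)\le 2$ I would first check that $H_{SR}=H^*$. A non-adjacent pair of $H$ is at distance $2=diam(H)$, hence is a diametral, and so MMD, pair; an adjacent pair $w,w'$ is MMD iff each is maximally distant from the other, and in a diameter-$2$ graph this forces $N_H(w)\subseteq N_H[w']$ and $N_H(w')\subseteq N_H[w]$, i.e. $N_H[w]=N_H[w']$ (true twins). Comparing with the definition of $H^*$, the edge sets coincide. Now $sdim_f(H)=\frac12|V(H)|$ together with Theorem~\ref{sdimbounds}(b) produces exactly a fixed-point-free bijection $\beta$ with $w\beta(w)$ an edge of $H_{SR}=H^*$; that is $(\star)$.

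The case $diam(H)>2$ is where the real work lies. First I would record the structure of $K:=K_1\odot H$: it has diameter $\le 2$, and since $diam(H)>2$ forbids a universal vertex in $H$, the apex is isolated in $K_{SR}$ while, for $w,w'\in V(H)$, $w$ is MMD $w'$ in $K$ iff $ww'\in E(H^*)$ (the diameter-$2$ computation above applied to $K$); moreover no vertex of $V(H)$ is isolated in $H^*$, so $M(K)=V(H)$. The crux is the inequality $sdim_f(K)\le \tau_f(H^*)$, where $\tau_f(H^*)$ is the fractional vertex cover number of $H^*$, i.e. the minimum of $\sum_{w} g^*(w)$ over $g^*:V(H)\to[0,1]$ with $g^*(w)+g^*(w')\ge 1$ on every edge of $H^*$. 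To prove it I would take a half-integral optimal cover $g^*$ (values in $\{0,\tfrac12,1\}$), extend it by assigning $0$ to the apex, and verify it is a strong resolving function of $K$: the pairs $\{v,w\}$ and the MMD pairs inside $V(H)$ yield constraints already implied by the cover inequalities, and for an adjacent non-twin pair $w,w'$ (so $N_H[w]\ne N_H[w']$) the set $S_K\{w,w'\}$ contains some $z\in N_H(w)\triangle N_H(w')$ with $z\notin\{w,w'\}$; such a $z$ is a non-neighbor of one endpoint, so that endpoint together with $z$ is an edge of $H^*$ lying inside $S_K\{w,w'\}$, whence $g^*(S_K\{w,w'\})\ge 1$.

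Finally I would combine $sdim_f(K)\le\tau_f(H^*)\le\frac12|V(H)|$ (the last from the all-$\tfrac12$ cover) with the hypothesis $sdim_f(K)=\frac12|V(H)|$ to force $\tau_f(H^*)=\frac12|V(H)|$. By LP duality this says $H^*$ has a fractional perfect matching, and a graph has a fractional perfect matching iff it has a spanning subgraph each of whose components is an edge or an odd cycle; reading such a subgraph as a product of transpositions and cyclic rotations gives the fixed-point-free permutation required by $(\star)$. The main obstacle I anticipate is exactly this last case: checking the constraint from adjacent non-twin pairs when extending the fractional cover, and correctly invoking the fractional-matching/edge-and-odd-cycle dictionary, which is also why $(\star)$ must be phrased with permutations and cycles rather than with a perfect matching — the triangle $H^*=K_3$ (arising from $H=K_3$) already admits $(\star)$ via a $3$-cycle while having no perfect matching.
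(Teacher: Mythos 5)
Your proposal is correct, but it takes a genuinely different route from the paper's. The paper handles both cases uniformly and very briefly: by Lemma~\ref{lexi_ttwin}(c), $(G[H])_{SR}$ contains $|V(G)|$ disjoint layer-copies of $H_{SR}$ (when $diam(H)\le 2$), respectively of $(K_1\odot H)_{SR}$ (when $diam(H)>2$), so the monotonicity Lemma~\ref{sr_subgraph} yields the lower bound $|V(G)|\cdot sdim_f(H)$, respectively $|V(G)|\cdot sdim_f(K_1\odot H)$, and the trivial upper bound $\frac{1}{2}|V(G[H])|$ of Theorem~\ref{sdimbounds} finishes the proof. You instead build an explicit fixed-point-free bijection pairing each vertex of $G[H]$ with an MMD partner inside its own $H$-layer and invoke the characterization of Theorem~\ref{sdimbounds}(b), which forces you to extract from each hypothesis a fixed-point-free permutation $\beta$ of $V(H)$ moving along $E(H^*)$. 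In the case $diam(H)\le 2$ this is essentially immediate from Theorem~\ref{sdimbounds}(b) applied to $H$; note only that you implicitly use the converse of Observation~\ref{observation-1}(b), namely that $S\{x,y\}=\{x,y\}$ forces $x$ MMD $y$ (true, since a neighbor of $x$ farther from $y$ would lie in $S\{x,y\}$, but it deserves a line). In the case $diam(H)>2$ you correctly observe that Theorem~\ref{sdimbounds}(b) cannot be quoted for $K_1\odot H$ (its order is $|V(H)|+1$), and your workaround is sound: the inequality $sdim_f(K_1\odot H)\le\tau_f(H^*)$, proved by extending an optimal fractional vertex cover of $H^*$ by zero on the apex and checking that constraints from non-MMD pairs and apex pairs are implied by cover constraints, is the crucial and correctly chosen direction (the generic inequality, that $sdim_f$ dominates the optimum of the MMD-constraints-only LP, goes the wrong way here); combined with LP duality and the edges-and-odd-cycles characterization of fractional perfect matchings it delivers $\beta$. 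What the paper's argument buys is brevity and self-containedness (only lemmas already in the paper). What yours buys is extra structural information --- under either hypothesis $H^*$ admits a spanning subgraph whose components are edges and odd cycles, and the extremal pairing on $G[H]$ is exhibited explicitly, giving in passing $sdim_f(K_1\odot H)=\tau_f(H^*)=\nu_f(H^*)$ when $diam(H)>2$ --- at the cost of importing standard facts from fractional graph theory (LP duality and half-integrality; incidentally, half-integrality of the optimal cover is never actually needed in your cover-extension step).
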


\begin{proof}
First, let $diam(H) \le 2$ and $sdim_f(H)=\frac{1}{2}|V(H)|$. Then $(G[H])_{SR}$ contains $|V(G)|$ copies of $H_{SR}$, and 
thus $sdim_f(G[H]) \ge |V(G)|sdim_f(H)=\frac{1}{2}|V(G)| \cdot |V(H)|$ by Lemma~\ref{sr_subgraph}. Since $sdim_f(G[H]) \le 
\frac{1}{2}|V(G[H])|=\frac{1}{2}|V(G)| \cdot |V(H)|$ by Theorem~\ref{sdimbounds}, $sdim_f(G[H])=\frac{1}{2}|V(G)| 
\cdot |V(H)|$.

Second, let $diam(H)>2$ and $sdim_f(K_1 \odot H)=\frac{1}{2}|V(H)|$. We note, by Lemma~\ref{lexi_ttwin}(c), that 
$(G[H])_{SR}$ contains $|V(G)|$ copies of $(K_1 \odot H)_{SR}$. So, $sdim_f(G[H]) \ge |V(G)| sdim_f(K_1 \odot 
H)=\frac{1}{2}|V(G)| \cdot |V(H)|$ by Lemma~\ref{sr_subgraph}, and $sdim_f(G[H]) \le \frac{1}{2}|V(G)| \cdot |V(H)|$ by 
Theorem~\ref{sdimbounds}; thus $sdim_f(G[H])=\frac{1}{2}|V(G)| \cdot |V(H)|$.~\hfill
\end{proof}

Next, we consider $sdim_f(G[H])$ for $H \in \{K_m, C_m, P_m\}$. We first recall the following result that will be used. 

\begin{theorem}\emph{\cite{dirac}}\label{hamiltonian}
If $G$ is a simple graph of order $n \ge 3$ with $\delta(G) \ge \frac{n}{2}$, then $G$ is a Hamiltonian graph.
\end{theorem}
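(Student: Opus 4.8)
The plan is to give the classical longest-path proof of Dirac's theorem. First I would observe that the hypothesis forces $G$ to be connected: if $G$ had a component on $t$ vertices with $t \le n/2$, then every vertex of that component would have degree at most $t-1 < n/2$, contradicting $\delta(G) \ge n/2$. With connectivity in hand, the strategy is to first produce a cycle spanning the vertices of a longest path, and then upgrade that cycle to a Hamiltonian cycle using connectivity.

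Next I would fix a \emph{longest} path $P : v_0 v_1 \cdots v_\ell$ in $G$. By maximality of $P$, every neighbor of the endpoint $v_0$ lies on $P$, and likewise every neighbor of $v_\ell$ lies on $P$; otherwise $P$ could be extended. The key step is a pigeonhole argument. Define $S = \{\, i \in \{1,\ldots,\ell\} : v_0 v_i \in E(G)\,\}$ and $T = \{\, i \in \{1,\ldots,\ell\} : v_\ell v_{i-1} \in E(G)\,\}$. Since all neighbors of $v_0$ (respectively $v_\ell$) appear on $P$, we have $|S| = \deg(v_0) \ge n/2$ and $|T| = \deg(v_\ell) \ge n/2$. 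Both $S$ and $T$ are subsets of $\{1,\ldots,\ell\}$, a set of size $\ell \le n-1 < n$, so $|S| + |T| \ge n > \ell$ forces $S \cap T \neq \emptyset$. Picking any $i \in S \cap T$ gives edges $v_0 v_i$ and $v_\ell v_{i-1}$, which combine with the two path-segments to form the cycle
\[
C : v_0,\, v_i,\, v_{i+1},\, \ldots,\, v_\ell,\, v_{i-1},\, v_{i-2},\, \ldots,\, v_1,\, v_0,
\]
a cycle passing through all $\ell+1$ vertices of $P$.

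Finally I would argue that $C$ must be Hamiltonian. If some vertex $w \in V(G)$ lay outside $C$, then connectivity of $G$ would supply an edge joining $V(G) \setminus V(C)$ to $V(C)$, say $w v_j \in E(G)$ with $v_j \in V(C)$. Cutting $C$ at $v_j$ and prepending $w$ would yield a path on $\ell+2$ vertices, strictly longer than $P$ — contradicting the choice of $P$ as a longest path. Hence $V(C) = V(G)$ and $G$ is Hamiltonian. I expect the pigeonhole step to be the delicate part: one must set up the index sets $S$ and $T$ so that they live in a common range of size $\ell$ and the count $|S|+|T| > \ell$ genuinely forces an overlap, and then verify that the chosen index $i$ really produces a cycle covering every $v_0,\ldots,v_\ell$ without repeating a vertex. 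The extension-to-Hamiltonicity step is comparatively routine once connectivity is secured.
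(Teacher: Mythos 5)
Your proposal is correct: it is the classical longest-path/pigeonhole proof of Dirac's theorem, and each step checks out (the connectivity reduction, the index sets $S$ and $T$ with $|S|+|T|\ge n>\ell$ forcing an overlap, the resulting spanning cycle on $V(P)$, and the extension-by-connectivity contradiction). Note, however, that the paper itself offers no proof to compare against --- Theorem~\ref{hamiltonian} is quoted verbatim from Dirac's 1952 paper \cite{dirac} as a known tool, so your argument simply supplies the standard proof of that cited result.
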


\begin{cor}
Let $G$ be a connected graph of order $n \ge 2$. Then
\begin{itemize}
\item[(a)] $sdim_f(G[K_m])=\frac{1}{2}nm$ for $m\ge 2$;
\item[(b)] $sdim_f(G[C_m])=\frac{1}{2}nm$ for $m \ge 3$;
\item[(c)] $sdim_f(G[P_m])=\frac{1}{2}nm$ for $m \ge 2$ and $m \neq 3$.
\end{itemize}
\end{cor}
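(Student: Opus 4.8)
The plan is to verify that each of the three graphs $H\in\{K_m,C_m,P_m\}$ (under the stated conditions on $m$) satisfies exactly one of the two hypotheses of Lemma~\ref{lem_k}, so that the conclusion $sdim_f(G[H])=\frac12\,nm$ follows immediately. In each case I must check the relevant diameter condition and then the corresponding value of either $sdim_f(H)$ or $sdim_f(K_1\odot H)$, both of which should equal $\frac12|V(H)|$.

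For part (a), $H=K_m$ has $diam(K_m)=1\le 2$, so I invoke the first branch of Lemma~\ref{lem_k}; since $K_m$ is vertex-transitive, Theorem~\ref{v_transitive} gives $sdim_f(K_m)=\frac{m}{2}=\frac12|V(K_m)|$, and Lemma~\ref{lem_k} applies. For part (b), $H=C_m$ with $m\ge3$ has $diam(C_m)=\lfloor m/2\rfloor$. When $m\in\{3,4\}$ we have $diam(C_m)\le 2$ and I use Theorem~\ref{sdimFthm}(c) (or vertex-transitivity) to get $sdim_f(C_m)=\frac m2$; when $m\ge5$ we have $diam(C_m)>2$, so I must instead compute $sdim_f(K_1\odot C_m)$. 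Here $K_1\odot C_m\cong W_{m+1}$, and Theorem~\ref{sdimFthm}(d) gives $sdim_f(W_{m+1})=\frac12(m+1-1)=\frac m2=\frac12|V(C_m)|$, exactly matching the second branch of Lemma~\ref{lem_k}.

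For part (c), $H=P_m$ with $m\ge2$ and $m\ne3$ has $diam(P_m)=m-1$. For $m=2$, $P_2=K_2$ has diameter $1\le2$ and $sdim_f(P_2)=1=\frac22$ by vertex-transitivity (it is a single edge). For $m\ge4$ we have $diam(P_m)>2$, so I compute $sdim_f(K_1\odot P_m)$: the graph $K_1\odot P_m$ is a fan, and I expect its strong resolving graph to force $sdim_f(K_1\odot P_m)=\frac m2=\frac12|V(P_m)|$, again triggering the second branch of Lemma~\ref{lem_k}. The excluded case $m=3$ is precisely the one where this fails, since $diam(P_3)=2$ but $sdim_f(P_3)=1\ne\frac32$ (a path has $sdim_f=1$ by Theorem~\ref{sdimbounds}(a)), so neither branch gives the target value — this explains the hypothesis $m\ne3$.

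The main obstacle is the fan computation $sdim_f(K_1\odot P_m)=\frac m2$ for $m\ge4$, since unlike the wheel this is not covered by a named theorem in the excerpt. I would handle it by analyzing $(K_1\odot P_m)_{SR}$ directly via Lemma~\ref{lexi_ttwin} (applied with $G=K_1$, effectively) or the corona structure: the apex vertex together with the path endpoints and the MMD relations among the path vertices should yield a strong resolving graph each of whose components is regular, whence Proposition~\ref{v_regular} gives $sdim_f(K_1\odot P_m)=\frac12|M(K_1\odot P_m)|=\frac m2$. If the components are not literally regular, I would instead exhibit a spanning regular subgraph of $(K_1\odot P_m)_{SR}$ and apply Corollary~\ref{v_reg_sub}, or verify the matching/vertex-cover equality directly. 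Apart from this computation, the proof is a routine case check feeding into Lemma~\ref{lem_k}.
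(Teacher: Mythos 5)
Your overall strategy is the paper's: feed each case into Lemma~\ref{lem_k}. Parts (a) and the $m=2$ case of (c) match the paper exactly. But part (c) for $m\ge 4$ --- which you yourself identify as the main obstacle --- is left unproven, and your primary plan for it would fail. The components of $(K_1\odot P_m)_{SR}$ are \emph{not} regular: one checks (as the paper does) that $(K_1\odot P_m)_{SR}\cong \overline{P}_m$ for $m\ge 4$, which is a connected non-regular graph (the two path-endpoints have degree $m-2$ while internal vertices have degree $m-3$), so Proposition~\ref{v_regular} does not apply directly. Your fallback --- a spanning regular subgraph plus Corollary~\ref{v_reg_sub} --- is the right idea, but it is precisely the nontrivial step, and you give no construction. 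The paper supplies it in three pieces: for $m=4$, $\overline{P}_4$ contains the perfect matching $u_1u_3,\,u_2u_4$, so $\nu((K_1\odot P_4)_{SR})=2=\frac{1}{2}|M(K_1\odot P_4)|$ and Propositions~\ref{matchingN} and~\ref{uppersdimF} close the gap; for $m=5$, $\overline{P}_5$ contains the explicit Hamiltonian cycle $u_1u_3u_5u_2u_4u_1$; and for $m\ge 6$, $\delta(\overline{P}_m)=m-3\ge \frac{m}{2}$, so Dirac's theorem (Theorem~\ref{hamiltonian}) gives a Hamiltonian cycle, whence Lemma~\ref{sr_subgraph}, Proposition~\ref{v_regular} (applied to the spanning cycle), and Proposition~\ref{uppersdimF} yield $sdim_f(K_1\odot P_m)=\frac{m}{2}$. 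Without some such argument your part (c) is incomplete.

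There is also a small but real error in part (b): you assign $m=5$ to the second branch of Lemma~\ref{lem_k}, claiming $diam(C_5)>2$, but $diam(C_5)=2$, so that branch's hypothesis fails at $m=5$. The conclusion survives because $C_5$ belongs in the first branch ($diam(C_5)\le 2$ and $sdim_f(C_5)=\frac{5}{2}$ by vertex-transitivity), which is exactly how the paper splits the cases: $m\in\{3,4,5\}$ via the first branch and $m\ge 6$ via the wheel computation $sdim_f(K_1\odot C_m)=sdim_f(W_{m+1})=\frac{m}{2}$.
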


\begin{proof}
Let $G$ be a connected graph of order $n \ge 2$.

(a) For $m \ge 2$, $diam(K_m)=1 \le 2$ and $sdim_f(K_m)=\frac{m}{2}$ by Theorem~\ref{v_transitive}; thus 
$sdim_f(G[K_m])=\frac{1}{2}nm$ by Lemma~\ref{lem_k}.

(b) If $m \in \{3,4,5\}$, then $diam(C_m) \le 2$ and $sdim_f(C_m)=\frac{m}{2}$ by Theorem~\ref{v_transitive}; 
thus, by Lemma~\ref{lem_k}, $sdim_f(G[C_m])=\frac{1}{2}nm$ for $m \in \{3,4,5\}$. Next, let $m \ge 6$; then $diam(C_m)>2$ and $sdim_f(K_1 \odot C_m)=sdim_f(W_{m+1})=\frac{m}{2}$ by Theorem~\ref{sdimFthm}(d). So, by Lemma~\ref{lem_k}, $sdim_f(G[C_m])=\frac{1}{2}nm$ for $m \ge 6$. 

(c) Since $P_2=K_2$, the equality holds for $m=2$ by (a) of the current corollary. So, let $m \ge 4$; then 
$diam(P_m)>2$ and $M(K_1 \odot P_m)=V(P_m)$. Let $P_m$ be given by $u_1u_2 \ldots u_m$. If $m=4$, then $(K_1 \odot P_4)_{SR}$ contains two paths, $u_1u_3$ and $u_2u_4$. So, $sdim_f(K_1 \odot P_4) \ge \nu((K_1 \odot P_4)_{SR})=2$ by Proposition~\ref{matchingN}, and $sdim_f(K_1 \odot P_4) \le \frac{|M(K_1 \odot P_4)|}{2}=2$ by Proposition~\ref{uppersdimF}; thus $sdim_f(K_1 \odot P_4)=2$.

Next, we consider for $m \ge 5$. If $m=5$, then $(K_1 \odot P_5)_{SR}$ contains a 5-cycle $u_1u_3u_5u_2u_4u_1$. Now, notice $(K_1 \odot P_m)_{SR} \cong \overline{P}_m$, the complement of $P_m$, when $m\geq 4$. Since $\overline{P}_m$, when $m\geq 6$, is a simple graph of order $m$ with minimum degree at least $\frac{m}{2}$, $(K_1 \odot P_m)_{SR}$ is a Hamiltonian graph by Theorem~\ref{hamiltonian}. Since $(K_1 \odot P_m)_{SR}$ contains an $m$-cycle $C_m$ for $m \ge 5$, $sdim_f(K_1 \odot P_m) \ge sdim_f(C_m)=\frac{m}{2}$ by Lemma~\ref{sr_subgraph} and Proposition~\ref{v_regular}, and $sdim_f(K_1 \odot P_m) \le \frac{|M(K_1 \odot P_m)|}{2}=\frac{m}{2}$ by Proposition~\ref{uppersdimF}; thus $sdim_f(K_1 \odot P_m)=\frac{m}{2}$ for $m \ge 5$. 

Therefore, for $m \ge 4$, $sdim_f(K_1 \odot P_m)=\frac{m}{2}$, and hence $sdim_f(G[P_m])=\frac{nm}{2}$ by Lemma~\ref{lem_k}.~\hfill
\end{proof}

\begin{remark}
We note that $sdim_f(G[P_3])$ may or may not achieve the value $\frac{3}{2}|V(G)|$. 

First, we show that $sdim_f(P_4[P_3])=5<6=\frac{|V(P_4[P_3])|}{2}$. It was shown in~\cite{lexi_sdim} that $(P_4[P_3])_{SR} \cong K_2[K_2 \cup K_1] \cup 2K_2$. So, $sdim_f(P_4[P_3]) \ge \nu((P_4[P_3])_{SR})=5$ by Proposition~\ref{matchingN} and $sdim_f(P_4[P_3]) \le \frac{|M(P_4[P_3])|}{2}=\frac{10}{2}=5$ by Proposition~\ref{uppersdimF}. 

Second, we show that $sdim_f(C_5[P_3])=\frac{15}{2}=\frac{|V(C_5[P_3])|}{2}$. By Proposition~\ref{structure_SRofLex}, $(C_5[P_3])_{SR}\cong (C_5)_{SR}[P_3^*]$, because $C_5$ is true twin-free and $M(C_5)=V(C_5)$. Since $(C_5)_{SR} \cong C_5$ and $P_3^*\cong K_2\cup K_1$, we have $(C_5[P_3])_{SR}\cong C_5[K_2\cup K_1]$. Now, one sees that 3 disjoint copies of $C_5$ are contained as a subgraph in $(C_5[P_3])_{SR}$. Hence, $sdim_f(C_5[P_3]) \ge 3 \cdot sdim_f(C_5)=3 \cdot \frac{5}{2}=\frac{15}{2}$ by Theorem~\ref{v_transitive} and Lemma~\ref{sr_subgraph}, and $sdim_f(C_5[P_3]) \le \frac{|V(C_5[P_3])|}{2}=\frac{15}{2}$ by Theorem~\ref{sdimbounds}. 
\end{remark}

Next, we obtain bounds for $sdim_f(G[H])$ in case of true twin-free graphs $G$. Figure~\ref{diagram} is a schematic of proof  for Theorem~\ref{lexi2} that may be helpful to readers.

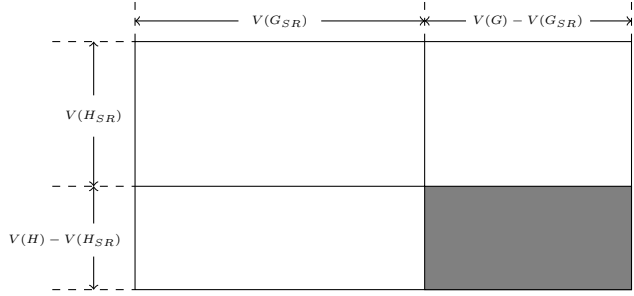
\begin{figure}[ht]
\centering
\begin{tikzpicture}[scale=.55, transform shape]

\node [scale=1.4] at (3.5,6.5) {\tiny $V(G_{SR})$};
\node [scale=1.4] at (9.5,6.5) {\tiny $V(G)-V(G_{SR})$};
\node [scale=1.4] at (-1,4.2) {\tiny $V(H_{SR})$};
\node [scale=1.4] at (-1.7,1.2) {\tiny $V(H)-V(H_{SR})$};

\draw(0,0)--(12,0)--(12,6)--(0,6)--(0,0);
\draw(0,2.5)--(7,2.5)--(7,6);
\filldraw[fill=gray, draw=black] (7,0) rectangle (12,2.5);
\draw[dashed](0,6)--(0,7);
\draw[dashed](7,6)--(7,7);
\draw[dashed](12,6)--(12,7);
\draw[dashed](-2,6)--(0,6);
\draw[dashed](-2,2.5)--(0,2.5);
\draw[dashed](-2,0)--(0,0);

\draw[<-](0,6.5)--(2.5,6.5);
\draw[->](4.5,6.5)--(7,6.5);
\draw[<-](7,6.5)--(8,6.5);
\draw[->](11,6.5)--(12,6.5);
\draw[->](-1,4.7)--(-1,6);
\draw[->](-1,3.7)--(-1,2.5);
\draw[->](-1,1.6)--(-1,2.5);
\draw[->](-1,0.8)--(-1,0);

\end{tikzpicture}
\caption{Schematic of proof for Theorem~\ref{lexi2}.}\label{diagram}
\end{figure}

\begin{theorem}\label{lexi2}
Let $G$ be a connected graph of order $n \ge 2$ without true twin vertices, $H$ be a graph of order $m \ge 2$, and let 
$|M(G)|=n'$ and $|M(H)|=m'$. If $diam(H) \le 2$, then
$$sdim_f(G[H]) \ge \max\{n sdim_f(H)+(m-m') sdim_f(G),(n-n') sdim_f(H)+m sdim_f(G)\}$$
and 
$$sdim_f(G[H]) \le \frac{1}{2}(nm'+mn'-n'm'),$$
where both bounds are sharp.
\end{theorem}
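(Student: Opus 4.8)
The plan is to read off the lower bound from two families of local constraints — one for each fiber $\{u_1\}\times V(H)$ and one for each layer $V(G)\times\{u_2\}$ of $G[H]$ — chosen so that the two families never overlap, and to recognize the upper bound as nothing more than Proposition~\ref{uppersdimF} once $|M(G[H])|$ is computed. Fix a strong resolving function $g$ of $G[H]$. For a fixed $u_1\in V(G)$, Lemma~\ref{lexi_ttwin}(c) together with $diam(H)\le 2$ shows that $(u_1,x_2)$ and $(u_1,y_2)$ are MMD in $G[H]$ precisely when $x_2y_2\in E(H_{SR})$ (distance $2$, i.e. diametral, or true twins); hence the map $u_2\mapsto g(u_1,u_2)$ satisfies $g(u_1,x_2)+g(u_1,y_2)\ge 1$ on every edge of $H_{SR}$, and zero-extending it off $M(H)$ gives a strong resolving function of $H$ by the same reasoning used in the proof of Lemma~\ref{sr_subgraph}. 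Thus
$$\sum_{u_2\in M(H)} g(u_1,u_2)\ \ge\ sdim_f(H)\qquad(\star).$$
Dually, for fixed $u_2\in V(H)$, since $G$ has no true twins, Lemma~\ref{lexi_ttwin}(a) gives that $(x_1,u_2)$ and $(y_1,u_2)$ are MMD in $G[H]$ exactly when $x_1y_1\in E(G_{SR})$, so the identical argument yields
$$\sum_{u_1\in M(G)} g(u_1,u_2)\ \ge\ sdim_f(G)\qquad(\star\star).$$

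The crux is that $(\star)$ and $(\star\star)$ can be combined without double counting. For the first term of the maximum I would sum $(\star)$ over all $n$ choices of $u_1$ and sum $(\star\star)$ over only the $m-m'$ vertices $u_2\in V(H)\setminus M(H)$. The vertices entering the first sum lie in $V(G)\times M(H)$ and those entering the second lie in $M(G)\times(V(H)\setminus M(H))$; these two sets are disjoint, so since $g\ge 0$ their combined $g$-mass is at most $g(V(G[H]))$, giving $sdim_f(G[H])\ge n\,sdim_f(H)+(m-m')\,sdim_f(G)$. Summing $(\star\star)$ over all $m$ layers and $(\star)$ over only the $n-n'$ vertices $u_1\in V(G)\setminus M(G)$ uses the symmetric disjointness of $M(G)\times V(H)$ and $(V(G)\setminus M(G))\times M(H)$ to give $sdim_f(G[H])\ge (n-n')\,sdim_f(H)+m\,sdim_f(G)$. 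Taking the larger of the two establishes the lower bound.

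For the upper bound I would simply invoke Proposition~\ref{uppersdimF}, namely $sdim_f(G[H])\le\frac12|M(G[H])|$, and evaluate $|M(G[H])|$ via Proposition~\ref{structure_SRofLex}. Since $(G[H])_{SR}\cong G_{SR}[H^*]\cup\bigcup_{i=1}^{\,n-n'}(H^*)_{-}$ and $diam(H)\le 2$ forces $(H^*)_{-}=H_{SR}$ (distance-$2$ pairs are diametral, hence MMD, so $E(H^*)=E(H_{SR})$ and both have non-isolated vertex set $M(H)$), we get $|V((H^*)_{-})|=m'$ and $|V(G_{SR}[H^*])|=n'm$, whence $|M(G[H])|=n'm+(n-n')m'=nm'+mn'-n'm'$. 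This is exactly twice the claimed upper bound, so $sdim_f(G[H])\le\frac12(nm'+mn'-n'm')$.

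Both bounds are sharp. Taking, for instance, $G=C_n$ with $n\ge 4$ (connected, true twin-free, and satisfying $M(G)=V(G)$) and $H=K_m$ (with $diam(H)=1$ and $M(H)=V(H)$), we have $n'=n$ and $m'=m$, so both the lower bound (each term equals $\tfrac12 nm$) and the upper bound collapse to $\frac12 nm=\frac12|V(G[H])|$, which is the actual value by Lemma~\ref{lem_k} and Theorem~\ref{sdimbounds}. The main obstacle is the lower-bound bookkeeping: one must verify that $diam(H)\le 2$ makes the fiber-MMD pairs coincide \emph{exactly} with $E(H_{SR})$ and the layer-MMD pairs with $E(G_{SR})$, and one must restrict the fiber sums in $(\star)$ to $M(H)$ (zero-extending off $M(H)$ to preserve the strong-resolving property) so that the fiber and layer constraints partition a subset of $V(G[H])$ rather than overlap. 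It is precisely this restriction that replaces the naive coefficients $m$ and $n$ by $m-m'$ and $n-n'$.
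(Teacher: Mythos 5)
Your proof is correct, and for the two bounds it is essentially the paper's argument with the machinery inlined. The paper gets the lower bound by noting, via Proposition~\ref{structure_SRofLex}, that $(G[H])_{SR}$ contains the disjoint unions $n\cdot H_{SR}\cup(m-m')\cdot G_{SR}$ and $m\cdot G_{SR}\cup(n-n')\cdot H_{SR}$ as subgraphs and then applying Lemma~\ref{sr_subgraph}; you recover exactly those two disjoint constraint families directly from Lemma~\ref{lexi_ttwin} (fibers over each $u_1\in V(G)$ giving the $E(H_{SR})$-constraints, layers over each $u_2\in V(H)$ giving the $E(G_{SR})$-constraints, with true-twin-freeness of $G$ and $diam(H)\le 2$ entering exactly where they should) and sum them over disjoint vertex sets. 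Your zero-extension step is precisely the content of the proof of Lemma~\ref{sr_subgraph}, and it rests on the same standard fact the paper itself uses (e.g., in the proof of Theorem~\ref{thm_Cartesian}): a $[0,1]$-valued function satisfying $f(x)+f(y)\ge 1$ on every edge of the strong resolving graph is a strong resolving function. Your upper-bound computation of $|M(G[H])|=nm'+mn'-n'm'$ is identical to the paper's. Where you genuinely differ is sharpness: the paper exhibits $K_{1,n-1}[H]$ and $H[H]$ with $H$ the house graph --- examples in which the attained bound is strictly better than the other bound --- at the cost of constructing an explicit strong resolving function and a subgraph analysis of $P_5[P_5]$; you instead take $G=C_n$, $H=K_m$, where $n'=n$ and $m'=m$ force the lower bound, the upper bound, and the true value (via Lemma~\ref{lem_k}) all to collapse to $\frac{nm}{2}$. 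Your witness is logically sufficient, since both inequalities hold with equality, but it is degenerate: it certifies sharpness only at a point where the two bounds coincide, whereas the paper's examples show each bound is tight even when it carries information the other does not.
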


\begin{proof}
Let $G$ be a connected graph of order $n \ge 2$ without true twin vertices, and let $H$ be a graph of order $m \ge 2$. Let 
$|V(G_{SR})|=|M(G)|=n'$ and $|V(H_{SR})|=|M(H)|=m'$.

First, we prove the lower bound. By Proposition~\ref{structure_SRofLex}, $(G[H])_{SR}$ contains $m$ copies of $G_{SR}$ and $(n-n')$ copies of $H_{SR}$ as a subgraph; thus, $sdim_f(G[H]) \ge m \cdot sdim_f(G)+(n-n') \cdot sdim_f(H)$ by Lemma~\ref{sr_subgraph}. Also, from Proposition~\ref{structure_SRofLex}, one can see that $(G[H])_{SR}$ contains $n' \cdot H_{SR} \cup (m-m') \cdot G_{SR} \cup (n-n') \cdot H_{SR}$ as a subgraph, and thus $sdim_f(G[H]) \ge n'  \cdot sdim_f(H)+(m-m') \cdot sdim_f(G)+(n-n') \cdot sdim_f(H)=n \cdot sdim_f(H)+(m-m') \cdot sdim_f(G)$ by Lemma~\ref{sr_subgraph}. 

For the upper bound, notice that $|M(G[H])|=|V(G)| \cdot |V(H)|-|V(G)-V(G_{SR})|\cdot|V(H)-V(H_{SR})|=nm-(n-n')(m-m')=nm'+mn'-n'm'$ (see Figure~\ref{diagram}); thus, $sdim_f(G[H]) \le \frac{1}{2}(nm'+mn'-n'm')$ by Proposition~\ref{uppersdimF}.

For the sharpness of the lower bound, let $G=K_{1,n-1}$ ($n \ge 3$) and $H$ be the house graph in Figure~\ref{fig_house}. Notice that $G$ contains no true twin vertices, $diam(G)=2$, and $|M(G)|=|V(G)|-1$. Since $(K_{1,n-1})_{SR} \cong K_{n-1}$ and $H^* \cong P_5$, by Proposition~\ref{structure_SRofLex}, $(G[H])_{SR} \cong (G_{SR}[H^*]) \cup H^* \cong (K_{n-1}[P_5]) \cup P_5$.  Since $(G[H])_{SR}$ contains $5K_{n-1} \cup 2K_2$ as a subgraph, $sdim_f(G[H]) \ge \frac{5(n-1)}{2}+2=\frac{5n-1}{2}$ by Proposition~\ref{v_regular} and Lemma~\ref{sr_subgraph}. Now, for $u=(x,y) \in V(G[H])$, let $g: V(G[H]) \rightarrow [0,1]$ a function defined by
\begin{equation*}
g(u)=\left\{
\begin{array}{ll}
1 & \mbox{ if $x$ is the non-leaf vertex in $G$ and $y$ is adjacent to a leaf in $H$},\\
\frac{1}{2} & \mbox{ if $x$ is a leaf in $G$},\\
0 & \mbox{otherwise},
\end{array}\right.
\end{equation*}
One can readily check that $g$ is a strong resolving function of $G[H]$ with $g(V(G))=2+\frac{5(n-1)}{2}$; thus, $sdim_f(G) 
\le \frac{5n-1}{2}$. Therefore, $sdim_f(G[H])=\frac{5n-1}{2}=\max\{2n, \frac{5n-1}{2}\}$, achieving the lower bound.

For the sharpness of the upper bound, let $G$ and $H$ be isomorphic to the house graph on 5 vertices in Figure~\ref{fig_house}. 
Since $G$ contains no true twin vertices, $diam(G)=2$, and $(G)_{SR}=(H)_{SR} \cong H^* \cong P_5$, we have $(G[H])_{SR} \cong (G)_{SR}[H^*] \cong P_5[P_5]$ by Proposition~\ref{structure_SRofLex}. Since $(G[H])_{SR} \cong P_5[P_5]$ contains 5 copies of $C_5$ as a subgraph (see Figure~\ref{fig_lexipf}), $sdim_f(G[H]) \ge 5 \cdot \frac{5}{2}=\frac{25}{2}$ by Proposition~\ref{v_regular} and Lemma~\ref{sr_subgraph}. On the other hand, $sdim_f(G[H]) \le \frac{25}{2}$ by Proposition~\ref{uppersdimF}. Thus, $sdim_f(G[H])= \frac{25}{2}$, achieving the upper bound.~\hfill
\end{proof}

\begin{figure}[ht]
\centering
\begin{tikzpicture}[scale=.55, transform shape]

\node [draw, shape=circle] (1) at  (0,6) {};
\node [draw, shape=circle] (2) at  (0,4.5) {};
\node [draw, shape=circle] (3) at  (0,3) {};
\node [draw, shape=circle] (4) at  (0,1.5) {};
\node [draw, shape=circle] (5) at  (0,0) {};
\node [draw, shape=circle] (11) at  (1.5,6) {};
\node [draw, shape=circle] (22) at  (1.5,4.5) {};
\node [draw, shape=circle] (33) at  (1.5,3) {};
\node [draw, shape=circle] (44) at  (1.5,1.5) {};
\node [draw, shape=circle] (55) at  (1.5,0) {};
\node [draw, shape=circle] (111) at  (3,6) {};
\node [draw, shape=circle] (222) at  (3,4.5) {};
\node [draw, shape=circle] (333) at  (3,3) {};
\node [draw, shape=circle] (444) at  (3,1.5) {};
\node [draw, shape=circle] (555) at  (3,0) {};
\node [draw, shape=circle] (1111) at  (4.5,6) {};
\node [draw, shape=circle] (2222) at  (4.5,4.5) {};
\node [draw, shape=circle] (3333) at  (4.5,3) {};
\node [draw, shape=circle] (4444) at  (4.5,1.5) {};
\node [draw, shape=circle] (5555) at  (4.5,0) {};
\node [draw, shape=circle] (11111) at  (6,6) {};
\node [draw, shape=circle] (22222) at  (6,4.5) {};
\node [draw, shape=circle] (33333) at  (6,3) {};
\node [draw, shape=circle] (44444) at  (6,1.5) {};
\node [draw, shape=circle] (55555) at  (6,0) {};

\node [draw, shape=circle] (a1) at  (10,6) {};
\node [draw, shape=circle] (a2) at  (10,4.5) {};
\node [draw, shape=circle] (a3) at  (10,3) {};
\node [draw, shape=circle] (a4) at  (10,1.5) {};
\node [draw, shape=circle] (a5) at  (10,0) {};
\node [draw, shape=circle] (a11) at  (11.5,6) {};
\node [draw, shape=circle] (a22) at  (11.5,4.5) {};
\node [draw, shape=circle] (a33) at  (11.5,3) {};
\node [draw, shape=circle] (a44) at  (11.5,1.5) {};
\node [draw, shape=circle] (a55) at  (11.5,0) {};
\node [draw, shape=circle] (a111) at  (13,6) {};
\node [draw, shape=circle] (a222) at  (13,4.5) {};
\node [draw, shape=circle] (a333) at  (13,3) {};
\node [draw, shape=circle] (a444) at  (13,1.5) {};
\node [draw, shape=circle] (a555) at  (13,0) {};
\node [draw, shape=circle] (a1111) at  (14.5,6) {};
\node [draw, shape=circle] (a2222) at  (14.5,4.5) {};
\node [draw, shape=circle] (a3333) at  (14.5,3) {};
\node [draw, shape=circle] (a4444) at  (14.5,1.5) {};
\node [draw, shape=circle] (a5555) at  (14.5,0) {};
\node [draw, shape=circle] (a11111) at  (16,6) {};
\node [draw, shape=circle] (a22222) at  (16,4.5) {};
\node [draw, shape=circle] (a33333) at  (16,3) {};
\node [draw, shape=circle] (a44444) at  (16,1.5) {};
\node [draw, shape=circle] (a55555) at  (16,0) {};

\node [scale=1.4] at (3,-1.5) {\large $P_5[P_5]$};
\node [scale=1.4] at (13.1,-1.5) {\large $5C_5 \subset P_5[P_5]$};

\draw(1)--(2)--(3)--(4)--(5)--(55)--(44)--(33)--(22)--(11)--(1)--(22)--(2)--(33)--(3)--(44)--(4)--(55);
\draw(1111)--(2222)--(3333)--(4444)--(5555)--(55555)--(44444)--(33333)--(22222)--(11111)--(1111)--(22222)--(2222)--(33333)--(3333)--(44444)--(4444)--(55555);
\draw(111)--(222)--(333)--(444)--(555);
\draw(2)--(11)--(111)--(1111)--(222)--(22)--(111)--(2222)--(222)--(11);
\draw(3)--(22)--(333)--(2222);
\draw(4)--(33)--(222)--(3333)--(333)--(33);
\draw(5)--(44)--(333)--(4444)--(444)--(44);
\draw(33)--(444)--(3333);
\draw(44)--(555)--(4444);
\draw(55)--(444)--(5555)--(555)--(55);
\draw(1)--(33)--(5)--(22)--(4)--(11)--(3)--(55)--(2)--(44)--(1)--(55);
\draw(5)--(11)--(333)--(55)--(222)--(44)--(111)--(33)--(555)--(22)--(444)--(11)--(555);
\draw(55)--(111)--(3333)--(555)--(2222)--(444)--(1111)--(333)--(5555)--(222)--(4444)--(111)--(5555);
\draw(555)--(1111)--(33333)--(5555)--(22222)--(4444)--(11111)--(3333)--(55555)--(2222)--(44444)--(1111)--(55555);
\draw(5555)--(11111);
\draw(2222)--(11111);
\draw(3333)--(22222);
\draw(4444)--(33333);
\draw(5555)--(44444);

\draw[thick](a1)--(a2)--(a22)--(a111)--(a11)--(a1);
\draw[thick](a3)--(a4)--(a5)--(a44)--(a33)--(a3);
\draw[thick](a1111)--(a222)--(a2222)--(a22222)--(a11111)--(a1111);
\draw[thick](a55)--(a555)--(a5555)--(a333)--(a444)--(a55);
\draw[thick](a3333)--(a4444)--(a55555)--(a44444)--(a33333)--(a3333);

\end{tikzpicture}
\caption{$P_5[P_5]$ and five disjoint 5-cycles as a subgraph of $P_5[P_5]$.}\label{fig_lexipf}
\end{figure}
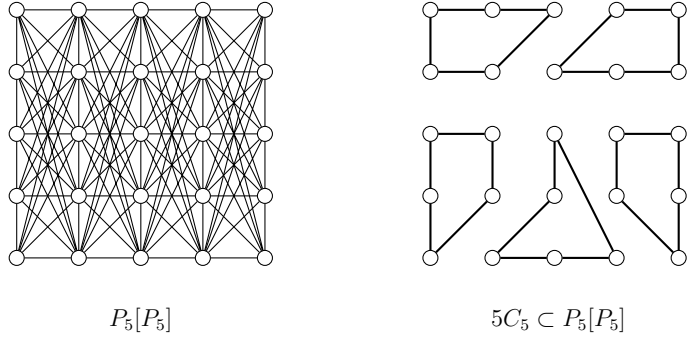

If $diam(H)>2$, then, by Lemma~\ref{lexi_ttwin}(c), we need to replace $H_{SR}$ (for the case of $diam(H) \le 2$) by $(K_1 
\odot H)_{SR}$ in Proposition~\ref{lexi2}. So, we have the following

\begin{cor}
Let $G$ be a connected graph of order $n \ge 2$ without true twin vertices, $H$ be a graph of order $m \ge 2$, and let 
$|M(G)|=n'$ and $|M(K_1 \odot H)|=m'$. If $diam(G)>2$, then 
$$sdim_f(G[H]) \ge \max\{n sdim_f(K_1 \odot H)+(m-m')sdim_f(G), (n-n') sdim_f(K_1 \odot H)+m sdim_f(G)\}$$
and 
$$sdim_f(G[H]) \le \frac{1}{2}(nm'+mn'-n'm').$$
\end{cor}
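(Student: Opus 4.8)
The plan is to mirror the proof of Theorem~\ref{lexi2}, replacing the role of $H_{SR}$ by that of $(K_1\odot H)_{SR}$ throughout, and to first pin down which diameter hypothesis is actually operative. The entire argument of Theorem~\ref{lexi2} rests on the within-fiber structure of Lemma~\ref{lexi_ttwin}(c): for $x=(x_1,x_2)$ and $y=(x_1,y_2)$ in a common fiber, $x$ MMD $y$ in $G[H]$ if and only if $d_H(x_2,y_2)\ge 2$ or $N_H[x_2]=N_H[y_2]$. This is exactly the edge relation of $H^*$, and since the apex of $K_1\odot H$ is universal we have $d_{K_1\odot H}(x_2,y_2)=\min\{2,d_H(x_2,y_2)\}$; comparing maximally distant conditions then gives the identification $(H^*)_-\cong (K_1\odot H)_{SR}$, whence $m'=|M(K_1\odot H)|=|V((H^*)_-)|$. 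The point is that the condition forcing the switch from $H_{SR}$ to $(K_1\odot H)_{SR}$ is a condition on $H$: when $diam(H)\le 2$ one has $(H^*)_-\cong H_{SR}$ (Theorem~\ref{lexi2}), while for $diam(H)>2$ one must use $(K_1\odot H)_{SR}$, which is why the sentence preceding the statement reads ``If $diam(H)>2$''. Accordingly I would open the proof by recording that the printed hypothesis $diam(G)>2$ is to be read as $diam(H)>2$, and by establishing the identification $(H^*)_-\cong (K_1\odot H)_{SR}$ that legitimizes phrasing the bounds via $sdim_f(K_1\odot H)$ and $m'$.

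With this in hand I would invoke Proposition~\ref{structure_SRofLex} (which needs only that $G$ be true twin-free) to get $(G[H])_{SR}\cong (G_{SR}[H^*])\cup\bigcup_{i=1}^{n-n'}(H^*)_-$, and then exhibit two families of disjoint subgraphs, applying Lemma~\ref{sr_subgraph} to each. First, the factor $G_{SR}[H^*]$ contains $m=|V(H^*)|$ vertex-disjoint copies of $G_{SR}$ (fix each second coordinate), while the external summands $\bigcup_{i=1}^{n-n'}(H^*)_-$ supply $n-n'$ disjoint copies of $(K_1\odot H)_{SR}$, giving $sdim_f(G[H])\ge m\,sdim_f(G)+(n-n')sdim_f(K_1\odot H)$. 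Second, I would split the second coordinate of $G_{SR}[H^*]$ by whether the $H^*$-vertex is isolated: the $m'$ non-isolated coordinates yield $n'$ disjoint copies of $(H^*)_-\cong(K_1\odot H)_{SR}$ (fix each first coordinate from $V(G_{SR})$), the $m-m'$ isolated coordinates yield $m-m'$ disjoint copies of $G_{SR}$, and the external summands add $n-n'$ further copies of $(K_1\odot H)_{SR}$; as these are mutually disjoint, Lemma~\ref{sr_subgraph} gives $sdim_f(G[H])\ge n'\,sdim_f(K_1\odot H)+(m-m')sdim_f(G)+(n-n')sdim_f(K_1\odot H)=n\,sdim_f(K_1\odot H)+(m-m')sdim_f(G)$. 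The stated lower bound is the larger of the two.

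For the upper bound I would count $|M(G[H])|=|V((G[H])_{SR})|$ from the same structural isomorphism: the factor $G_{SR}[H^*]$ contributes $n'm$ vertices, none of which is isolated because $G_{SR}$ has no isolated vertex, and the external copies contribute $(n-n')m'$ vertices, so $|M(G[H])|=n'm+(n-n')m'=nm'+mn'-n'm'$. Proposition~\ref{uppersdimF} then yields $sdim_f(G[H])\le\frac12(nm'+mn'-n'm')$.

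The main obstacle I anticipate is the bookkeeping of disjointness for the second lower bound: one must verify that the three batches of copies — the $n'$ copies of $(K_1\odot H)_{SR}$ over non-isolated $H^*$-coordinates, the $m-m'$ copies of $G_{SR}$ over isolated $H^*$-coordinates, and the $n-n'$ external copies — use pairwise disjoint vertex sets, so that their $sdim_f$ contributions genuinely add. A secondary point to get right is the clean derivation of $(H^*)_-\cong(K_1\odot H)_{SR}$ from Lemma~\ref{lexi_ttwin}(c) together with the distance formula for $K_1\odot H$, and the accompanying clarification that the governing hypothesis is $diam(H)>2$ rather than $diam(G)>2$ as printed.
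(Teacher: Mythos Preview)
Your proposal is correct and follows exactly the route the paper intends: the paper gives no separate proof but simply states that for $diam(H)>2$ one replaces $H_{SR}$ by $(K_1\odot H)_{SR}$ in the proof of Theorem~\ref{lexi2}, and you carry this out in full, correctly identifying $(H^*)_-\cong(K_1\odot H)_{SR}$ via Lemma~\ref{lexi_ttwin}(c) and the distance formula in $K_1\odot H$ (noting that $diam(H)>2$ rules out a universal vertex in $H$, so the apex is not in $M(K_1\odot H)$). Your observation that the printed hypothesis $diam(G)>2$ should read $diam(H)>2$ is also right and matches the sentence the paper places immediately before the corollary.
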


In the forgoing results, we have imposed true twin-freeness upon $G$ in the lexicographic product $G[H]$. To give a bound on $sdim_f(G[H])$ in terms of the factors $G$ and $H$ while allowing the left factor $G$ to have true twins, we adapt an argument of Feng and Wang in~\cite{lexi_dimF} for fractional metric dimension to the present setting. We need a few further preliminary notions.\\

Let $u_1 \equiv u_2$ if $u_1=u_2$, $N_G[u_1]=N_G[u_2]$, or $N_G(u_1)=N_G(u_2)$.
Hernando et al.~\cite{Hernando} proved that `$\equiv$' is an equivalence relation and that the equivalence class of each vertex is of one of the following three types: a class with one vertex (type 1), a clique with at least two vertices (type 2), and an independence set with at least two vertices (type 3). Denote by $O_i$ the collection of equivalence classes of type $i$ for each $i\in \{1,2,3\}$, and let $m_i=\sum_{C \in O_i} |C|$; notice that $|V(G)|=m_1+m_2+m_3$. \\

Given $x,y\in V(G)$, define $SL\{x,y\}=(N[x]\cup N[y])\cap S\{x,y\}$. By a ``strong locating function" we mean a function $f: V(G)\rightarrow [0,1]$, where $f(SL\{x,y\})\geq 1$ for any two distinct vertices $x,y$. Denote by $sl_f(G)$ the minimum weight of all strong locating functions of $G$. Clearly, $sl_f(G)\geq sdim_f(G)$, since $SL\{x,y\}\subseteq S\{x,y\}$. Note that if $diam(G)\leq 2$, then $sl_f(G)=sdim_f(G)$. Also, if $xy\in E(G)$, then $SL\{x,y\}=\{x,y\}\cup(N(x)\triangle N(y))$, where $\triangle$ denotes the symmetric difference between two sets.

\begin{lemma}\label{dis_when_equiv}
Let $G$ be a connected graph of order at least $2$ and $H$ be a graph. Let $(u_1,v_1)$ and $(u_2,v_2)$ be two distinct vertices of $G[H]$. Suppose $u_1\equiv u_2$. Then

\begin{equation*}
S_{G[H]}\{(u_1,v_1),(u_2,v_2)\}=\left\{
\begin{array}{ll}
\bigcup_{v\in SL_H\{v_1,v_2\}}\{(u_1,v)\} & \mbox{ if } u_1=u_2,\\
\bigcup_{v\in N_{\overline{H}}[v_1]}\{(u_1,v)\}\cup \bigcup_{v\in N_{\overline{H}}[v_2]}\{(u_2,v)\}  & \mbox{ if } N_G[u_1]=N_G[u_2],\\
\{(u_1,v_1),(u_2,v_2)\} & \mbox{ if } N_G(u_1)=N_G(u_2).
\end{array}\right.
\end{equation*}

\end{lemma}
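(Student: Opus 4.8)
The plan is to compute $S_{G[H]}\{(u_1,v_1),(u_2,v_2)\}$ directly from the distance formula for $G[H]$, handling the three types of the equivalence $u_1\equiv u_2$ in turn. Writing $x=(u_1,v_1)$ and $y=(u_2,v_2)$, in each case I would first confine $S_{G[H]}\{x,y\}$ to the one or two copies of $H$ that can possibly meet it, and then translate the defining condition ``$x$ lies on a $y-z$ geodesic or $y$ lies on an $x-z$ geodesic'' into a statement about $H$ alone.

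The tool I would establish first is a projection principle for twins: if $u_1\neq u_2$ and either $N_G[u_1]=N_G[u_2]$ or $N_G(u_1)=N_G(u_2)$, then $d_G(u_1,t)=d_G(u_2,t)$ for every $t\notin\{u_1,u_2\}$, since any geodesic out of $u_1$ leaves through a vertex of $N_G[u_1]=N_G[u_2]$. With this in hand, any candidate $z=(w_1,w_2)$ having $w_1\notin\{u_1,u_2\}$ satisfies $d_{G[H]}(x,z)=d_{G[H]}(y,z)$, so neither betweenness relation can hold and $z\notin S_{G[H]}\{x,y\}$. This alone confines $S_{G[H]}\{x,y\}$ to $\{u_1,u_2\}\times V(H)$ in the twin cases, and to $\{u\}\times V(H)$ when $u_1=u_2=u$, matching the support of the claimed right-hand sides.

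For the two twin cases the computation then finishes quickly. If $N_G(u_1)=N_G(u_2)$ then $d_{G[H]}(x,y)=d_G(u_1,u_2)=2$; reusing the projection principle on the only remaining possibilities $w_1=u_1$ and $w_1=u_2$ forces $d_{G[H]}(x,z)$ and $d_{G[H]}(y,z)$ to differ by $2$, which can happen only for $z\in\{x,y\}$, giving the singleton-pair formula. If $N_G[u_1]=N_G[u_2]$ then $d_{G[H]}(x,y)=d_G(u_1,u_2)=1$, so $x,y$ are adjacent and $z\in S_{G[H]}\{x,y\}$ exactly when $d_{G[H]}(x,z)\neq d_{G[H]}(y,z)$; evaluating this for $z=(u_1,w_2)$, where $d_{G[H]}(y,z)=1$ is constant, reduces the condition to $v_1w_2\notin E(H)$, i.e.\ $w_2\in N_{\overline{H}}[v_1]$, and symmetrically for $z=(u_2,w_2)$, producing the claimed union of complement-neighborhoods.

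The case $u_1=u_2=u$ is the one I expect to be the real obstacle, because inside the single copy $\{u\}\times V(H)$ all distances are capped at $2$. Here I would first use the capping to observe that any $z=(u,w_2)\in S_{G[H]}\{x,y\}$ must satisfy $d_{G[H]}(x,z)\le 1$ or $d_{G[H]}(y,z)\le 1$, hence $w_2\in N_H[v_1]\cup N_H[v_2]$; this is precisely the closed-neighborhood restriction built into $SL$, and it is what makes $SL_H\{v_1,v_2\}$ rather than $S_H\{v_1,v_2\}$ the natural target. When $v_1v_2\in E(H)$ the match is immediate from the remark $SL\{v_1,v_2\}=\{v_1,v_2\}\cup(N_H(v_1)\triangle N_H(v_2))$, since $z=(u,w_2)$ is separated by $x,y$ in the capped metric exactly when $w_2$ equals one of $v_1,v_2$ or is adjacent in $H$ to exactly one of them. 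The subtle point, and where the argument needs the most care, is the non-adjacent subcase $v_1v_2\notin E(H)$: the capped value $d_{G[H]}(x,y)=2$ destroys the long-range betweenness that the intrinsic metric of $H$ would register, so one must check the surviving vertices directly against $SL_H\{v_1,v_2\}$ rather than importing betweenness from $H$.
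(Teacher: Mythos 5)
Your projection principle, your handling of both twin cases, and your computation in the subcase $u_1=u_2$, $v_1v_2\in E(H)$ are all correct, and in these parts you are considerably more careful than the paper, whose entire proof is the one-sentence assertion that the formula ``easily follows from the distance relations on $G[H]$'' and from the fact that $u_1\equiv u_2$ forces $u_1$ MMD $u_2$. The genuine gap is exactly the subcase you flagged and then left open: $u_1=u_2=u$ with $v_1v_2\notin E(H)$. The check you defer there (``check the surviving vertices directly against $SL_H\{v_1,v_2\}$'') cannot succeed, because the claimed equality is false in that subcase. Your own capping argument shows what $S_{G[H]}$ actually is: for $z=(u,w_2)$ with $w_2\notin\{v_1,v_2\}$ we have $d_{G[H]}(x,z),d_{G[H]}(y,z)\in\{1,2\}$ while $d_{G[H]}(x,y)=2$, so neither $d_{G[H]}(y,z)=2+d_{G[H]}(x,z)$ nor $d_{G[H]}(x,z)=2+d_{G[H]}(y,z)$ can hold, and hence $S_{G[H]}\{x,y\}=\{x,y\}$. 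But $SL_H\{v_1,v_2\}$ is computed in the intrinsic metric of $H$ and can be strictly larger than $\{v_1,v_2\}$. Concretely, let $H=P_4$ with vertices $a,b,c,d$ in path order and take $v_1=b$, $v_2=d$: then $a\in N_H[b]$ and $b$ lies on the $d$--$a$ geodesic of $H$, so $SL_H\{b,d\}=\{a,b,d\}$, while $S_{G[H]}\{(u,b),(u,d)\}=\{(u,b),(u,d)\}$ for every connected $G$ of order at least two. So the first case of the lemma, as stated, fails for this pair; no amount of care completes an equality proof.

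What is true in general --- and what your argument, completed as above, does prove --- is the inclusion
\[
S_{G[H]}\{(u,v_1),(u,v_2)\}\subseteq\bigcup_{v\in SL_H\{v_1,v_2\}}\{(u,v)\},
\]
with equality when $v_1v_2\in E(H)$, and, when $v_1v_2\notin E(H)$, equality precisely when $SL_H\{v_1,v_2\}=\{v_1,v_2\}$ (which holds, for instance, whenever $diam(H)\le 2$, since then no vertex outside a non-adjacent pair $\{v_1,v_2\}$ can lie in $S_H\{v_1,v_2\}$). This inclusion is all the paper needs downstream: the proof of Lemma~\ref{f_u} only uses $f_u(SL_H\{v_1,v_2\})\ge f(S_{G[H]}\{(u,v_1),(u,v_2)\})\ge 1$, which follows from the inclusion, so Theorem~\ref{tfn_result} is unaffected. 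The repair is therefore not a longer proof but a weaker statement: restate the first case of Lemma~\ref{dis_when_equiv} as an inclusion, or add a hypothesis such as $diam(H)\le 2$. Your instinct that this subcase is where ``the argument needs the most care'' was right; carried out honestly, that care reveals the statement, not your approach, is at fault.
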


\begin{proof}
The formula easily follows from the distance relations on $G[H]$ and the fact that, for distinct vertices $x$ and $y$, $x\equiv y$ in $G$ implies that $x \mbox{ MMD } y$ in $G$.
~\hfill
\end{proof}

Given a function $f: V(G[H])\rightarrow [0,1]$, denote by $f_u$ the function on $V(H)$ such that $f_u(v)=f(u,v)$. \\

\begin{lemma}\label{f_u}
Let $G$ be a connected graph of order at least $2$ and let $H$ be a graph. If $f$ is a strong resolving function of $G[H]$, then $f_u$ is a strong locating function of $H$ for any $u\in V(G)$. In particular, this means $sdim_f(G[H])\geq |V(G)|sl_f(H)$.  
\end{lemma}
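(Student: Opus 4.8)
The plan is to prove the two assertions in sequence, with the second being an immediate consequence of the first. Fix an arbitrary $u \in V(G)$, and let $f$ be a strong resolving function of $G[H]$. To show $f_u$ is a strong locating function of $H$, I must verify $f_u(SL_H\{v_1,v_2\}) \ge 1$ for every pair of distinct vertices $v_1, v_2 \in V(H)$. The key observation is that the pair $(u,v_1), (u,v_2)$ lives in a single copy of $H$ inside $G[H]$, and I can apply Lemma~\ref{dis_when_equiv} with $u_1 = u_2 = u$ (trivially $u \equiv u$). That lemma's first case gives
\begin{equation*}
S_{G[H]}\{(u,v_1),(u,v_2)\} = \bigcup_{v \in SL_H\{v_1,v_2\}} \{(u,v)\}.
\end{equation*}

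The heart of the argument is then a translation of weights. Since $f$ is a strong resolving function, $f(S_{G[H]}\{(u,v_1),(u,v_2)\}) \ge 1$. By the displayed identity, the set on the left is exactly the set of vertices $\{(u,v) : v \in SL_H\{v_1,v_2\}\}$, so
\begin{equation*}
f\bigl(S_{G[H]}\{(u,v_1),(u,v_2)\}\bigr) = \sum_{v \in SL_H\{v_1,v_2\}} f(u,v) = \sum_{v \in SL_H\{v_1,v_2\}} f_u(v) = f_u(SL_H\{v_1,v_2\}),
\end{equation*}
where the middle equality is just the definition $f_u(v) = f(u,v)$. Chaining these gives $f_u(SL_H\{v_1,v_2\}) \ge 1$, which is precisely the defining inequality for $f_u$ to be a strong locating function of $H$. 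Since $v_1, v_2$ were arbitrary, this completes the first claim.

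For the quantitative consequence, I would take $f$ to be a \emph{minimum} strong resolving function of $G[H]$, so that $f(V(G[H])) = sdim_f(G[H])$. By the first part, for each $u \in V(G)$ the restricted function $f_u$ is a strong locating function of $H$, whence $f_u(V(H)) \ge sl_f(H)$ by definition of $sl_f(H)$ as the minimum weight. Summing over all $u \in V(G)$ and using that the copies partition $V(G[H])$,
\begin{equation*}
sdim_f(G[H]) = f(V(G[H])) = \sum_{u \in V(G)} \sum_{v \in V(H)} f(u,v) = \sum_{u \in V(G)} f_u(V(H)) \ge |V(G)| \, sl_f(H).
\end{equation*}

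I expect the only subtle point to be confirming that the first case of Lemma~\ref{dis_when_equiv} applies here, i.e. that setting $u_1 = u_2 = u$ is legitimate; this holds because $u \equiv u$ under the relation ``$\equiv$'' (reflexivity is built in via the clause $u_1 = u_2$), so the hypothesis $u_1 \equiv u_2$ of the lemma is satisfied and the $u_1 = u_2$ branch of the case distinction is the relevant one. Everything else is bookkeeping: the identification $f(u,v) = f_u(v)$ and the fact that the vertex sets $\{u\} \times V(H)$ over $u \in V(G)$ partition $V(G[H])$. No other results from the excerpt are needed.
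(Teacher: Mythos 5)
Your proof is correct and follows essentially the same route as the paper: apply Lemma~\ref{dis_when_equiv} in the case $u_1=u_2=u$ to identify $S_{G[H]}\{(u,v_1),(u,v_2)\}$ with $\{u\}\times SL_H\{v_1,v_2\}$, translate weights via $f_u(v)=f(u,v)$ to get $f_u(SL_H\{v_1,v_2\})\ge 1$, and then obtain the bound $sdim_f(G[H])\ge |V(G)|\,sl_f(H)$ by summing the minimum strong resolving function over the copies $\{u\}\times V(H)$. The paper states this chain of equalities more tersely and leaves the final summation implicit, but the argument is identical.
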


\begin{proof}
Given distinct vertices $v_1,v_2\in V(H)$, we have $$f_u(SL_H\{v_1,v_2\})=\sum_{v\in SL_H\{v_1,v_2\}}f(u,v)=f(S_{G[H]}\{(u,v_1),(u,v_2)\})\geq 1$$ 
by Lemma~\ref{dis_when_equiv} and the fact that $f$ is a strong resolving function of $G[H]$.
~\hfill
\end{proof}

\begin{theorem}\label{tfn_result}
Let $G$ be a connected graph of order at least $2$ and let $H$ be a graph. Then 
$$sdim_f(G[H])\geq m_1(G)sl_f(H)+\frac{m_2(G)}{2}sdim_f(K_2[H])+\frac{m_3(G)}{2}|V(H)|.$$
\end{theorem}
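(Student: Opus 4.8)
The plan is to decompose the vertex set $V(G)$ according to the equivalence relation $\equiv$ and to bound the contribution of each equivalence class separately, using the structure of $S_{G[H]}$ established in Lemma~\ref{dis_when_equiv}. Let $f:V(G[H])\rightarrow[0,1]$ be an arbitrary strong resolving function of $G[H]$. Recalling that $V(G)=m_1\cup m_2\cup m_3$ partitions into the type-1, type-2, and type-3 vertices, I would write $sdim_f(G[H])=f(V(G[H]))=\sum_{u\in V(G)}f_u(V(H))$ and estimate $\sum_{u}f_u(V(H))$ by grouping the sum over the three types of classes.

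First I would dispose of the type-1 vertices. Each such $u$ is its own equivalence class, so Lemma~\ref{f_u} applies directly: $f_u$ is a strong locating function of $H$, whence $f_u(V(H))\geq sl_f(H)$. Summing over the $m_1$ vertices of type 1 gives the term $m_1(G)\,sl_f(H)$. This is the straightforward part, as it is essentially a restatement of Lemma~\ref{f_u} restricted to singleton classes.

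Next I would handle type-2 classes (true-twin cliques) and type-3 classes (false-twin independent sets). The idea is that a pair of distinct vertices $u_1,u_2$ in the same class is an MMD pair in $G$, so the relevant distance formulas from Lemma~\ref{dis_when_equiv} govern $S_{G[H]}$. For a type-3 class, the third case of Lemma~\ref{dis_when_equiv} gives $S_{G[H]}\{(u_1,v),(u_2,v)\}=\{(u_1,v),(u_2,v)\}$, forcing $f(u_1,v)+f(u_2,v)\geq 1$ for every $v\in V(H)$; pairing up vertices within each type-3 class and summing over all $|V(H)|$ choices of $v$ yields at least $\frac{m_3(G)}{2}|V(H)|$ after summing over classes (the factor $\tfrac12$ reflecting that each class of size $s$ contributes $\tfrac{s}{2}|V(H)|$ via these pairwise lower bounds, exactly as in the regular-graph argument of Proposition~\ref{v_regular}). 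For type-2 classes, the plan is to view the restriction of $f$ to the copies of $H$ sitting over any two true-twin vertices $u_1,u_2$ as (essentially) a strong resolving function of $K_2[H]$, because the induced subgraph of $G[H]$ on $V(H_{u_1})\cup V(H_{u_2})$ together with the MMD relations described by the second case of Lemma~\ref{dis_when_equiv} matches the strong-resolving structure of $K_2[H]$. Thus each such pair contributes at least $sdim_f(K_2[H])$, and averaging over all pairs within each type-2 class (again a regular-graph / double-counting argument on the clique) produces the term $\frac{m_2(G)}{2}sdim_f(K_2[H])$.

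The main obstacle I anticipate is the type-2 case: making precise the claim that the portion of $f$ on a true-twin pair behaves like a strong resolving function of $K_2[H]$, and then correctly averaging when a type-2 class has more than two vertices. The subtlety is that the defining inequalities $f(S_{G[H]}\{x,y\})\geq 1$ for $x,y$ in the two copies involve the sets $\bigcup_{v\in N_{\overline H}[v_1]}\{(u_1,v)\}\cup\bigcup_{v\in N_{\overline H}[v_2]}\{(u_2,v)\}$, which are precisely the strong-resolving sets of $K_2[H]$; one must check that these inequalities, restricted to a single pair $(u_1,u_2)$, are exactly the constraints defining a strong resolving function of $K_2[H]$, so that $f_{u_1}(V(H))+f_{u_2}(V(H))\geq sdim_f(K_2[H])$. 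For a clique of size $s\geq 3$ one then sums this pairwise bound over all $\binom{s}{2}$ pairs, in which each vertex-weight $f_{u}(V(H))$ appears $s-1$ times, obtaining $(s-1)\sum_{u\in C}f_u(V(H))\geq \binom{s}{2}sdim_f(K_2[H])$, i.e.\ $\sum_{u\in C}f_u(V(H))\geq \frac{s}{2}sdim_f(K_2[H])$; summing over all type-2 classes gives the stated term. Once these per-class bounds are assembled and summed, adding the three contributions yields the desired inequality.
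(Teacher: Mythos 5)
Your proposal is correct and follows essentially the same route as the paper's proof: the same decomposition of $V(G)$ into $\equiv$-classes, Lemma~\ref{f_u} for the type-1 term, the transfer of $f$ restricted to a true-twin pair into a strong resolving function of $K_2[H]$ via Lemma~\ref{dis_when_equiv} followed by double counting over the $\binom{|C|}{2}$ pairs in each clique, and a pairwise MMD double-counting bound for type-3 classes. The only cosmetic difference is in the type-3 case, where the paper invokes the full induced $K_{|C|}[H]$ inside $(G[H])_{SR}$ together with Corollary~\ref{v_reg_sub}, while you use only the same-$v$ MMD pairs with the regularity argument of Proposition~\ref{v_regular}; both yield $\sum_{u\in C}|f_u|\geq \frac{|C|}{2}|V(H)|$.
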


\begin{proof}
First, to ease notation, we will denote $f(X)$ by $|f|$ when $f$ is a function from domain $X$ to $[0,1]$. Now, let $f$ be a strong resolving function of $G[H]$ with $|f|=sdim_f(G[H])$. Let $O_1 \cup O_2 \cup O_3$ partition $V(G)$ as described. Then

$$|f|=\sum_{C\in O_1}\sum_{u\in C}|f_u|+\sum_{C\in O_2}\sum_{u\in C}|f_u|+\sum_{C\in O_3}\sum_{u\in C}|f_u|.$$ 

Immediately, we see that Lemma~\ref{f_u} yields $\sum_{C\in O_1}\sum_{u\in C}|f_u|\geq m_1(G)sl_f(H)$.\\ 

Second, we will show that 
\begin{equation}\label{1}
\displaystyle\sum_{C\in O_2}\sum_{u\in C}|f_u|\geq\frac{m_2(G)}{2}sdim_f(K_2[H]).
\end{equation} 

Choose any $C\in O_2$ and any distinct vertices $u_1,u_2\in C$. Let $V(K_2)=\{a_1, a_2\}$ and define $g: V(K_2[H])\rightarrow [0,1]$ by $g(a_i,v)=f_{u_i}(v)$. We claim that $g$ is a strong resolving function of $K_2[H]$. 
Let $x=(a_1,v_1),y=(b_2,v_2)$ be two distinct vertices of $K_2[H]$; we need to show that $g(S_{K_2[H]}\{x,y\})\geq 1$. If $b_2=a_1$, then $g(S_{K_2[H]}\{x,y\})=\sum_{v\in SL_H\{v_1,v_2\}} g(a_1,v)=\sum_{v\in SL_H\{v_1,v_2\}} f_{u_1}(v)=f_{u_1}(SL_H\{v_1,v_2\})\geq 1$ by Lemma~\ref{dis_when_equiv} and Lemma~\ref{f_u}. If $b_2=a_2$, notice that $N_{K_2}[a_1]=N_{K_2}[a_2]$, as is $N_{G[H]}[u_1]=N_{G[H]}[u_2]$. Thus, we have $g(S_{K_2[H]}\{x,y\})=f(S_{G[H]}\{(u_1,v_1),(u_2,v_2)\})\geq 1$ by Lemma~\ref{dis_when_equiv}. 

Since $g$ is a strong resolving function of $K_2[H]$, we have $|f_{u_1}|+|f_{u_2}|\geq sdim_f(K_2[H])$. Summing over all pairs of distinct vertices of $C$, we have $\sum_{u_1,u_2\in C, u_1\neq u_2}(|f_{u_1}|+|f_{u_2}|)\geq {|C|\choose 2}sdim_f(K_2[H])$. Since $\sum_{u_1,u_2\in C, u_1\neq u_2}(|f_{u_1}|+|f_{u_2}|)=(|C|-1)\sum_{u\in C}|f_u|$, we have $\sum_{u\in C}|f_u|\geq \frac{|C|}{2}sdim_f(K_2[H])$ and $(\ref{1})$ follows.\\

Third, we show that 
\begin{equation}\label{2}
\displaystyle\sum_{C\in O_3}\sum_{u\in C}|f_u|\geq\frac{m_3(G)}{2}|V(H)|.
\end{equation}

For any $C\in O_3$ and any distinct vertices $u_1, u_2\in C$, notice that we have $(u_1,v_1)$ MMD $(u_2,v_2)$ in $G[H]$ for any $v_1,v_2\in V(H)$. Thus, each $C\in O_3$ induces the subgraph $K_{|C|}[H]$ in $(G[H])_{SR}$. By Corollary~\ref{v_reg_sub}, $K_{|C|}[H]$ contributes $\frac{|C|}{2}|V(H)|$ to $sdim_f(G[H])$. Thus, we have $\sum_{u\in C}|f_u|\geq\frac{|C|}{2}|V(H)|$, and $(\ref{2})$ follows.~\hfill
\end{proof}

We conclude this section with an example on computing $sdim_f(G[H])$ when $G$ contains vertices of true twins, false twins, and neither. Let $G$ and $H$ be the graphs drawn in Figure~\ref{lexi_tfn_ex}. Notice that $u_1$ and $u_3$ are true twin vertices, $u_2$ and $u_4$ are false twin vertices, and $u_5$ and $u_6$ are neither in $G$.

First, we compute the lower bound of $sdim_f(G[H])$ using Theorem~\ref{tfn_result}. Notice that $m_i(G)=2$ for each $i \in \{1,2,3\}$, and $sl_f(H)=sdim_f(H)$ since $diam(H)=2$. By Theorem~\ref{tfn_result}, $sdim_f(G[H]) \ge 2 sdim_f(H)+sdim_f(K_2[H])+|V(H)|=2(1)+3+3=8$, since $K_2[P_3]$ contains 3 disjoint MMD pairs.

Second, we show that $sdim_f(G[H])=\frac{17}{2}$. Notice that: (i) since $u_1$ and $u_3$ are true twin vertices in $G$ and $\deg_H(w_2)=|V(H)|-1$, $(u_1, w_2)$ MMD $(u_3, w_2)$ in $G[H]$ by Lemma~\ref{lexi_ttwin}(b); (ii) since no two vertices in $S=\{u_2, u_4, u_6\}$ are true twin vertices and any two vertices in $S$ form an MMD pair in $G$, $(u_2, w_i)$ MMD $(u_4, w_i)$ and $(u_2, w_i)$ MMD $(u_6, w_i)$ and $(u_4, w_i)$ MMD $(u_6,w_i)$ in $G[H]$ for each $i \in \{1,2,3\}$ by Lemma~\ref{lexi_ttwin}(a); (iii) since $d_H(w_1,w_3)=2$, for each $j \in \{1,3,5\}$, $(u_j,w_1)$ MMD $(u_j, w_3)$ in $G[H]$ by Lemma~\ref{lexi_ttwin}(c). See Figure~\ref{lexi_tfn_ex} for a subgraph of $(G[H])_{SR}$ described above. Since $(G[H])_{SR}$ contains $4K_2$ and $3K_3$ as a subgraph, $sdim_f(G[H])\ge 4+3 \cdot \frac{3}{2}=\frac{17}{2}$ by Proposition~\ref{v_regular} and Lemma~\ref{sr_subgraph}. On the other hand, noting that $(u_5, w_2) \in (V(G)-M(G)) \times (V(H)-M(H))$, we have $sdim_f(G[H]) \le \frac{|M(G[H])|}{2}=\frac{17}{2}$ by Proposition~\ref{uppersdimF}. Thus, $sdim_f(G[H])=\frac{17}{2}$.

\begin{figure}[ht]
\centering
\begin{tikzpicture}[scale=.5, transform shape]

\node [draw, shape=circle] (1) at  (-2.5,4) {};
\node [draw, shape=circle] (2) at  (-1,5) {};
\node [draw, shape=circle] (3) at  (0.5,4) {};
\node [draw, shape=circle] (4) at  (-1,0) {};
\node [draw, shape=circle] (5) at  (-1,3) {};
\node [draw, shape=circle] (6) at  (-1,2) {};

\node [draw, shape=circle] (a) at  (4,4.5) {};
\node [draw, shape=circle] (b) at  (4,2.5) {};
\node [draw, shape=circle] (c) at  (4,0.5) {};

\node [draw, shape=circle] (1b) at  (11,5) {};
\node [draw, shape=circle] (1a) at  (10,2.5) {};
\node [draw, shape=circle] (1c) at  (10,0.5) {};
\node [draw, shape=circle] (3b) at  (13,5) {};
\node [draw, shape=circle] (3a) at  (12,2.5) {};
\node [draw, shape=circle] (3c) at  (12,0.5) {};

\node [draw, shape=circle] (2a) at  (17.5,5) {};
\node [draw, shape=circle] (4a) at  (20.5,5) {};
\node [draw, shape=circle] (6a) at  (19,4.5) {};
\node [draw, shape=circle] (2b) at  (17.5,3) {};
\node [draw, shape=circle] (4b) at  (20.5,3) {};
\node [draw, shape=circle] (6b) at  (19,2.5) {};
\node [draw, shape=circle] (2c) at  (17.5,1) {};
\node [draw, shape=circle] (4c) at  (20.5,1) {};
\node [draw, shape=circle] (6c) at  (19,0.5) {};
\node [draw, shape=circle] (5a) at  (14,2.5) {};
\node [draw, shape=circle] (5c) at  (14,0.5) {};

\node [scale=1.4] at (-3,4) {$u_1$};
\node [scale=1.4] at (-1,5.5) {$u_2$};
\node [scale=1.4] at (-1,3.5) {$u_5$};
\node [scale=1.4] at (-1,1.6) {$u_6$};
\node [scale=1.4] at (-1,-0.5) {$u_4$};
\node [scale=1.4] at (1,4) {$u_3$};
\node [scale=1.4] at (4.7,4.5) {$w_1$};
\node [scale=1.4] at (4.7,2.5) {$w_2$};
\node [scale=1.4] at (4.7,0.5) {$w_3$};

\node [scale=1.4] at (9.8,5) {$(u_1,w_2)$};
\node [scale=1.4] at (14.2,5) {$(u_3,w_2)$};
\node [scale=1.4] at (10,3) {$(u_1,w_1)$};
\node [scale=1.4] at (10,0) {$(u_1,w_3)$};

\node [scale=1.4] at (12,3) {$(u_3,w_1)$};
\node [scale=1.4] at (12,0) {$(u_3,w_3)$};
\node [scale=1.4] at (14,3) {$(u_5,w_1)$};
\node [scale=1.4] at (14,0) {$(u_5,w_3)$};

\node [scale=1.4] at (17.3,5.5) {$(u_2,w_1)$};
\node [scale=1.4] at (17.3,3.5) {$(u_2,w_2)$};
\node [scale=1.4] at (17.3,1.5) {$(u_2,w_3)$};
\node [scale=1.4] at (19,4.2) {$(u_6,w_1)$};
\node [scale=1.4] at (19,2.2) {$(u_6,w_2)$};
\node [scale=1.4] at (19,0.2) {$(u_6,w_3)$};
\node [scale=1.4] at (20.6,5.5) {$(u_4,w_1)$};
\node [scale=1.4] at (20.6,3.5) {$(u_4,w_2)$};
\node [scale=1.4] at (20.6,1.5) {$(u_4,w_3)$};

\node [scale=1.4] at (-1,-1.5) {\large $G$};
\node [scale=1.4] at (4,-1.5) {\large $H$};
\node [scale=1.4] at (16,-1.5) {\large $(G[H])_{SR}\supseteq 3K_3 \cup 4K_2$};
\draw [dashed] (7,-2)--(7,5.5);

\draw(1)--(2)--(3)--(5)--(1)--(4)--(3)--(1);
\draw(5)--(6);
\draw(a)--(b)--(c);
\draw(1b)--(3b);
\draw(1a)--(1c);
\draw(3a)--(3c);
\draw(2a)--(4a)--(6a)--(2a);
\draw(2b)--(4b)--(6b)--(2b);
\draw(2c)--(4c)--(6c)--(2c);
\draw(5a)--(5c);

\end{tikzpicture}
\caption{An example of $G[H]$ with $G$ containing true twins, false twins, and neither.}\label{lexi_tfn_ex}
\end{figure}
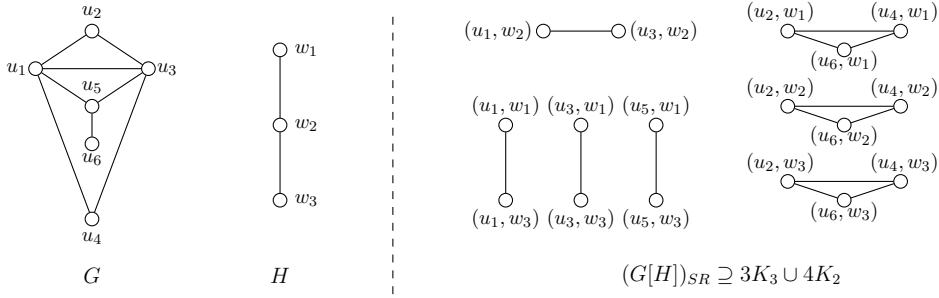

\section{Cartesian product graphs}

The \emph{Cartesian product} of two graphs $G$ and $H$, denoted by $G \square H$, is the graph with the vertex set $V(G) 
\times V(H)$ such that $(u,v)$ is adjacent to $(u', v')$ if and only if either $u=u'$ and $vv' \in E(H)$, or $v=v'$ and $uu' \in E(G)$. The \emph{direct product} (or \emph{tensor product}) of two graphs $G$ and $H$, denoted by $G \times H$, is the 
graph with the vertex set $V(G) \times V(H)$ such that $(u,v)$ is adjacent to $(u', v')$ if and only if $uu' \in E(G)$ and 
$vv' \in E(H)$. N.B.: the direct product is herein introduced and considered only insofar as it pertains to our study of the Cartesian product; the connection between the two products is indicated in the following theorem.

\begin{theorem}\emph{\cite{sdim_Cartesian}}\label{cartesian}
Let $G$ and $H$ be two connected graphs of order at least two. Then $$(G \square H)_{SR} \cong G_{SR} \times H_{SR}.$$
\end{theorem}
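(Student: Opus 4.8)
The plan is to reduce the claimed isomorphism to a coordinatewise characterization of the mutually-maximally-distant relation in $G \square H$ in terms of the corresponding relations in $G$ and in $H$. The single tool I would use throughout is the additive distance formula for the Cartesian product, $d_{G \square H}((u,v),(u',v')) = d_G(u,u') + d_H(v,v')$, together with the structural fact that every neighbor of $(u,v)$ in $G \square H$ is obtained by replacing exactly one coordinate by a neighbor (in $G$ or in $H$), leaving the other coordinate fixed. Thus $N_{G \square H}((u,v))$ partitions into the family $\{(u'',v): u'' \in N_G(u)\}$ and the family $\{(u,v''): v'' \in N_H(v)\}$.

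First I would characterize ``maximally distant.'' Fix distinct vertices $(u,v)$ and $(u',v')$. For a neighbor $(u'',v)$ with $u'' \in N_G(u)$ we have $d_{G\square H}((u'',v),(u',v')) = d_G(u'',u') + d_H(v,v')$, so the inequality $d_{G\square H}((u,v),(u',v')) \ge d_{G\square H}((u'',v),(u',v'))$ collapses to $d_G(u,u') \ge d_G(u'',u')$; demanding this for all $u'' \in N_G(u)$ is precisely the statement that $u$ is maximally distant from $u'$ in $G$. Symmetrically, the neighbors $(u,v'')$ with $v'' \in N_H(v)$ produce the condition that $v$ is maximally distant from $v'$ in $H$. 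Since these two families exhaust $N_{G\square H}((u,v))$, I conclude that $(u,v)$ is maximally distant from $(u',v')$ in $G \square H$ if and only if $u$ is maximally distant from $u'$ in $G$ \emph{and} $v$ is maximally distant from $v'$ in $H$. Applying this in both directions yields: $(u,v)$ MMD $(u',v')$ in $G \square H$ if and only if $u$ MMD $u'$ in $G$ and $v$ MMD $v'$ in $H$.

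From this equivalence the isomorphism follows quickly. Quantifying over all candidate partners gives $M(G \square H) = M(G) \times M(H)$, so the vertex set of $(G \square H)_{SR}$ equals $V(G_{SR}) \times V(H_{SR}) = V(G_{SR} \times H_{SR})$. The edge condition just derived is exactly the adjacency rule of the direct product, namely that $(u,v)$ and $(u',v')$ are adjacent iff $uu' \in E(G_{SR})$ and $vv' \in E(H_{SR})$. Hence the identity map on $M(G) \times M(H)$ is a graph isomorphism $(G \square H)_{SR} \cong G_{SR} \times H_{SR}$.

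The main obstacle is the decoupling step: one must verify that the ``maximally distant'' condition separates cleanly into an independent $G$-condition and $H$-condition. This hinges on the additivity of the Cartesian distance, which makes the $H$-contribution $d_H(v,v')$ a constant that cancels when comparing $(u,v)$ with its $G$-neighbors (and vice versa). I would therefore take care to confirm that the neighborhood is genuinely partitioned into the two single-coordinate families, so that no ``mixed'' neighbor can interfere, and to dispatch the degenerate cases $u=u'$ and $v=v'$ explicitly: since $G$ and $H$ are connected of order at least two, no vertex is maximally distant from itself, so these cases correctly contribute no MMD pair and the characterization remains exact.
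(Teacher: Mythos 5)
The paper itself gives no proof of Theorem~\ref{cartesian}; the result is imported from~\cite{sdim_Cartesian}. Your argument is correct and is essentially the standard proof given in that source: the additive distance formula $d_{G \square H}((u,v),(u',v'))=d_G(u,u')+d_H(v,v')$, together with the fact that neighbors in $G\square H$ change exactly one coordinate, decouples the maximally-distant condition coordinatewise, so $(u,v)$ MMD $(u',v')$ in $G\square H$ if and only if $u$ MMD $u'$ in $G$ and $v$ MMD $v'$ in $H$ (with the degenerate cases $u=u'$ or $v=v'$ correctly excluded, since no vertex of a connected graph of order at least two is maximally distant from itself), which yields both $M(G\square H)=M(G)\times M(H)$ and the direct-product adjacency rule.
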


\begin{theorem}\emph{\cite{kronecker}}\label{kro}
Let $G$ and $H$ be connected graphs of order at least two. Then
\begin{itemize}
\item[(a)] $G \times H$ is connected if and only if either $G$ or $H$ contains an odd cycle, and
\item[(b)] $G \times H$ has exactly two components if and only if neither $G$ nor $H$ contains an odd cycle.
\end{itemize}
\end{theorem}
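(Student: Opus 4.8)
The plan is to convert the connectivity question into a statement about walk lengths in the two factors. The one essential observation is the \emph{walk lifting} principle for the direct product: a walk $(u_0,v_0),(u_1,v_1),\dots,(u_k,v_k)$ exists in $G\times H$ if and only if $u_0u_1\cdots u_k$ is a walk of length $k$ in $G$ and $v_0v_1\cdots v_k$ is a walk of the \emph{same} length $k$ in $H$; this is immediate from the adjacency rule $(u,v)\sim(u',v')\iff uu'\in E(G)\text{ and }vv'\in E(H)$. Consequently, two vertices $(u,v)$ and $(u',v')$ lie in one component of $G\times H$ precisely when there is a single integer $k$ for which $G$ has a $u$--$u'$ walk of length $k$ and $H$ has a $v$--$v'$ walk of length $k$. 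Before treating the cases I would record the standard walk-parity facts for a connected graph of order at least two (which therefore has an edge): if the graph is bipartite with $2$-coloring $c$, every $u$--$u'$ walk has length $\equiv c(u)+c(u')\pmod 2$, and all sufficiently large integers of that parity occur (take a $u$--$u'$ path and pad by repeating one edge); if it is non-bipartite (equivalently, contains an odd cycle), then between any two vertices both parities occur, and in fact all sufficiently large integers occur. These facts are what turn the adjacency combinatorics into arithmetic on lengths.

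For part (a), assume without loss of generality that $G$ contains an odd cycle. Then for every pair $u,u'\in V(G)$ all sufficiently large $k$ arise as $u$--$u'$ walk lengths, while for every pair $v,v'\in V(H)$ at least all sufficiently large $k$ of one fixed parity arise (regardless of whether $H$ is bipartite). Choosing $k$ large enough and of the appropriate parity yields a common walk length, so by the lifting principle every two vertices of $G\times H$ are joined; hence $G\times H$ is connected.

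For part (b), assume both $G$ and $H$ are bipartite, with proper $2$-colorings $c_G$ and $c_H$. I would introduce the invariant $\phi(u,v)=c_G(u)+c_H(v)\pmod 2$ and check that it is constant along every edge of $G\times H$: crossing an edge flips both $c_G$ and $c_H$, leaving $\phi$ unchanged. Since $G$ and $H$ each have an edge, $\phi$ takes both values, so $G\times H$ has \emph{at least} two components. To get \emph{exactly} two, I would show each class $\phi^{-1}(0)$ and $\phi^{-1}(1)$ is connected: if $\phi(u,v)=\phi(u',v')$, then $c_G(u)+c_G(u')\equiv c_H(v)+c_H(v')\pmod 2$, so the forced parity of $u$--$u'$ walks in $G$ equals that of $v$--$v'$ walks in $H$; a common large walk length of that parity then exists in both factors, and the lifting principle joins $(u,v)$ to $(u',v')$. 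Thus $G\times H$ has exactly two components.

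The main obstacle is the walk-length lemma underlying both cases, and in particular the non-bipartite assertion that all sufficiently large integers of \emph{both} parities occur as walk lengths between a fixed pair of vertices. Proving this cleanly—route a walk from $u$ to $u'$, insert an odd closed walk to switch parity, and pad by traversing a single edge back and forth to reach every large length—is the one place requiring care. Once it is available, both implications of each biconditional and the exact component count reduce to the parity bookkeeping above.
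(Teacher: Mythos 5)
Your proof is correct. Note that the paper itself offers no proof of this statement---it is imported verbatim from Weichsel's 1963 paper \cite{kronecker}---so there is nothing internal to compare against; your argument (walk lifting in the direct product, walk-parity bookkeeping in bipartite versus non-bipartite factors, and the $\bmod\ 2$ invariant $c_G(u)+c_H(v)$ giving exactly two components) is essentially Weichsel's classical proof, and the two forward implications you establish do combine, by contraposition, to yield both biconditionals as you claim.
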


As an immediate consequence of Theorem~\ref{kro}(b), the following result follows.

\begin{cor}\emph{\cite{direct2}}\label{d_K2}
For a connected graph $G$ with no odd cycles, $G \times K_2=2G$.
\end{cor}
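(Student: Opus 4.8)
The plan is to use the two-component structure guaranteed by Theorem~\ref{kro}(b) and then identify each component explicitly with a copy of $G$. First I would recall the adjacency rule in the direct product: writing $V(K_2)=\{0,1\}$, a vertex $(u,i)$ is adjacent to $(u',j)$ in $G\times K_2$ precisely when $uu'\in E(G)$ and $i\neq j$. Since $G$ has no odd cycles, $G$ is bipartite; fix a bipartition $V(G)=A\cup B$. Because neither $G$ nor $K_2$ contains an odd cycle, Theorem~\ref{kro}(b) tells us that $G\times K_2$ has exactly two connected components.

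Next I would exhibit the two components by means of the bipartition. Set $X_1=(A\times\{0\})\cup(B\times\{1\})$ and $X_2=(A\times\{1\})\cup(B\times\{0\})$; these partition $V(G\times K_2)$. Every edge $uu'$ of $G$ has one endpoint in $A$ and one in $B$, say $u\in A$ and $u'\in B$, so the two product edges it generates, namely $(u,0)(u',1)$ and $(u,1)(u',0)$, lie wholly inside $X_1$ and wholly inside $X_2$ respectively. Hence no edge joins $X_1$ to $X_2$, and, combined with the fact that there are exactly two components, $X_1$ and $X_2$ are precisely those two components.

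Finally I would define $\phi:V(G)\to X_1$ by $\phi(u)=(u,0)$ for $u\in A$ and $\phi(u)=(u,1)$ for $u\in B$, and check that $\phi$ is a graph isomorphism from $G$ onto the component induced by $X_1$: for $u\in A$ and $u'\in B$ we have $uu'\in E(G)$ if and only if $(u,0)(u',1)\in E(G\times K_2)$, i.e. $\phi(u)\phi(u')\in E(G\times K_2)$, while $\phi$ is clearly a bijection onto $X_1$. The analogous map sending $u\in A$ to $(u,1)$ and $u\in B$ to $(u,0)$ identifies $G$ with the component on $X_2$. Therefore $G\times K_2\cong 2G$.

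The argument involves no real obstacle beyond bookkeeping; the only point deserving care is confirming that the explicit vertex partition $\{X_1,X_2\}$ coincides with the component decomposition supplied by Theorem~\ref{kro}(b), which is why I verify that no edge crosses between $X_1$ and $X_2$ rather than merely counting vertices and edges.
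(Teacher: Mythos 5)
Your proof is correct. For comparison: the paper offers no argument at all for this corollary --- it simply declares the result ``an immediate consequence of Theorem~\ref{kro}(b)'' and defers to the cited reference \cite{direct2}. Strictly speaking, Theorem~\ref{kro}(b) only \emph{counts} the components of $G \times K_2$; it says nothing about their isomorphism type, so the identification of each component with a copy of $G$ is genuine content that the paper leaves implicit. Your bipartition construction supplies exactly that missing content, and it is the standard argument: the sets $X_1=(A\times\{0\})\cup(B\times\{1\})$ and $X_2=(A\times\{1\})\cup(B\times\{0\})$ absorb all product edges, and the map $\phi$ is a bona fide isomorphism onto the subgraph induced by each $X_i$. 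One observation worth making: once $\phi$ is in hand, you do not need Theorem~\ref{kro}(b) at all --- connectivity of $G$ transfers through $\phi$ to show each $X_i$ induces a connected subgraph, so $X_1$ and $X_2$ are automatically the two components. This makes your argument fully self-contained (and even covers the degenerate case $G=K_1$, which Theorem~\ref{kro} formally excludes since it requires order at least two), whereas the paper's treatment buys brevity at the cost of resting on an external citation.
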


Next, we recall a result on the matching number of direct product graphs.

\begin{lemma}\emph{\cite{matching_direct}}\label{d_matching}
For any graphs $G$ and $H$, $\nu(G \times H) \ge 2 \cdot \nu(G) \cdot \nu(H)$.
\end{lemma}

Moreover, the following result is found in~\cite{am}, and we provide a proof here for readers' convenience.

\begin{lemma}\emph{\cite{am}}\label{d_matching-K_n}
For any graph $G$ and any integer $n\ge 2$, $\nu(G \times K_n) \ge n \cdot \nu(G)$.
\end{lemma}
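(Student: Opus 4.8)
The plan is to construct an explicit matching of $G\times K_n$ of size $n\cdot\nu(G)$. I would fix a maximum matching $M=\{a_1b_1,a_2b_2,\ldots,a_kb_k\}$ of $G$, where $k=\nu(G)$; since $M$ is a matching, the $2k$ vertices $a_1,b_1,\ldots,a_k,b_k$ are pairwise distinct. The idea is then to work fiber by fiber: for each edge $a_\ell b_\ell$ of $M$, I consider the $n$ vertices $(a_\ell,1),\ldots,(a_\ell,n)$ lying over $a_\ell$ and the $n$ vertices $(b_\ell,1),\ldots,(b_\ell,n)$ lying over $b_\ell$ in $G\times K_n$.

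The key observation is that, by the definition of the direct product, $(a_\ell,i)$ is adjacent to $(b_\ell,j)$ precisely when $a_\ell b_\ell\in E(G)$ and $i\neq j$; as $a_\ell b_\ell$ is an edge of $G$, this reduces to the single condition $i\neq j$. Hence a set of $n$ pairwise disjoint edges joining the two fibers corresponds exactly to a permutation $\pi$ of $\{1,\ldots,n\}$ with no fixed point (a derangement): match $(a_\ell,i)$ with $(b_\ell,\pi(i))$ for each $i$. For $n\ge 2$ such a derangement always exists, for instance the cyclic shift $\pi(i)\equiv i+1 \pmod n$, so each pair of fibers contributes exactly $n$ edges.

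Finally I would verify disjointness across different values of $\ell$. The edges produced from $a_\ell b_\ell$ use only vertices of the form $(a_\ell,\cdot)$ and $(b_\ell,\cdot)$, and because $a_1,b_1,\ldots,a_k,b_k$ are all distinct in $G$, the edge sets arising from distinct elements of $M$ are vertex-disjoint. Taking the union over all $k$ edges of $M$ therefore produces a matching of $G\times K_n$ with $n\cdot k=n\cdot\nu(G)$ edges, which gives $\nu(G\times K_n)\ge n\cdot\nu(G)$.

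I do not anticipate a genuine obstacle here: the only point needing care is the existence of a fixed-point-free permutation of $\{1,\ldots,n\}$, which fails only for $n=1$ and is guaranteed by the hypothesis $n\ge 2$. Equivalently, one can phrase the per-fiber step as selecting a perfect matching of $K_{n,n}$ with its diagonal removed, which is nonempty exactly when $n\ge 2$.
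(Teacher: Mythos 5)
Your proof is correct and takes essentially the same approach as the paper's: the paper likewise fixes a maximum matching $\{g_1g_1',\ldots,g_rg_r'\}$ of $G$ and, over each matched edge, uses the cyclic pairing $(g_i,h_1)(g_i',h_2),(g_i,h_2)(g_i',h_3),\ldots,(g_i,h_n)(g_i',h_1)$, which is exactly your cyclic-shift derangement $\pi(i)\equiv i+1 \pmod n$. Your write-up merely makes explicit the general observation (any fixed-point-free permutation works) and the cross-fiber disjointness check that the paper leaves implicit.
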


\begin{proof}
Let $\{g_1g_1', g_2g_2',\ldots, g_rg_r'\}$ be a maximum matching of $G$ and let $V(K_n)=\{h_1,h_2,\ldots,h_n\}$. Since $\cup_{i=1}^{r}\{(g_i,h_1)(g'_i,h_2), (g_i,h_2)(g'_i,h_3),\ldots,(g_i,h_{n-1})(g'_i,h_n),(g_i,h_n)(g'_i,h_1)\}$ forms a matching in $G \times K_n$ of cardinality $n \cdot \nu(G)$, the desired result follows.~\hfill
\end{proof}

We also note that Proposition~\ref{matchingN}, Theorem~\ref{cartesian}, and Lemma~\ref{d_matching}, combined together, gives the following

\begin{cor}
Let $G$ and $H$ be two connected graphs of order at least two. Then $sdim_f(G \square H) \ge 2 \cdot \nu(G_{SR}) \cdot 
\nu(H_{SR})$.
\end{cor}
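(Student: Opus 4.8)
The plan is to simply chain the three preceding results in the obvious order, since the corollary is designed to be their immediate consequence. First I would observe that the hypotheses guarantee that $G \square H$ is itself a connected graph of order at least two: the Cartesian product of connected graphs is connected, and its order is $|V(G)|\cdot|V(H)|\ge 4$. This legitimizes applying Proposition~\ref{matchingN} to the single graph $G \square H$, yielding
$$sdim_f(G \square H) \ge \nu\bigl((G \square H)_{SR}\bigr).$$

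Next I would invoke Theorem~\ref{cartesian}, which identifies the strong resolving graph of the Cartesian product with the direct product of the factors' strong resolving graphs, namely $(G \square H)_{SR} \cong G_{SR} \times H_{SR}$. Since the matching number is a graph isomorphism invariant, this gives $\nu\bigl((G \square H)_{SR}\bigr) = \nu\bigl(G_{SR} \times H_{SR}\bigr)$. Finally I would apply Lemma~\ref{d_matching} to the two graphs $G_{SR}$ and $H_{SR}$ (the lemma holds for arbitrary graphs, so no connectivity or order hypotheses on the strong resolving graphs are needed), obtaining $\nu\bigl(G_{SR} \times H_{SR}\bigr) \ge 2\,\nu(G_{SR})\,\nu(H_{SR})$.

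Stringing these together produces the chain
$$sdim_f(G \square H) \ge \nu\bigl((G \square H)_{SR}\bigr) = \nu\bigl(G_{SR} \times H_{SR}\bigr) \ge 2\,\nu(G_{SR})\,\nu(H_{SR}),$$
which is exactly the claimed bound. There is essentially no obstacle here; the only point requiring any care is verifying that $G \square H$ meets the connectivity and order conditions of Proposition~\ref{matchingN}, and that the matching number of a graph depends only on its isomorphism type so that Theorem~\ref{cartesian} may be applied inside $\nu(\,\cdot\,)$. Both are routine, so the corollary follows directly.
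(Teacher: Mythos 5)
Your proof is correct and follows exactly the paper's route: the authors also obtain this corollary by chaining Proposition~\ref{matchingN}, Theorem~\ref{cartesian}, and Lemma~\ref{d_matching} in precisely this order. Your added checks (connectivity of $G \square H$ and invariance of $\nu$ under isomorphism) are routine but harmless elaborations of the same argument.
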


Now, noting that $|M(G \square H)|=|M(G)| \cdot |M(H)|$, Proposition~\ref{uppersdimF} translates to the following

\begin{cor}\label{cor_C}
For two connected graphs $G$ and $H$, $sdim_f(G \square H) \le \frac{1}{2}|M(G)| \cdot |M(H)|$.
\end{cor}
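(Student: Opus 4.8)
The plan is to combine Proposition~\ref{uppersdimF} with the structural description of the strong resolving graph of a Cartesian product supplied by Theorem~\ref{cartesian}. The only thing requiring verification is the cardinality identity $|M(G \square H)| = |M(G)| \cdot |M(H)|$ flagged in the sentence immediately preceding the statement; once this is in hand, applying the general upper bound $sdim_f(\cdot) \le \frac{1}{2}|M(\cdot)|$ directly to the graph $G \square H$ yields the conclusion at once.

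First I would recall that, by the very definition of the boundary-vertex set, $M(\Gamma) = V(\Gamma_{SR})$ for any connected graph $\Gamma$. Applying this with $\Gamma = G \square H$ and invoking Theorem~\ref{cartesian}, we get $M(G \square H) = V\big((G \square H)_{SR}\big) = V(G_{SR} \times H_{SR})$. Since the direct product of two graphs has as its vertex set the full Cartesian product $V(G_{SR}) \times V(H_{SR})$ of the vertex sets of the factors, it follows that $|M(G \square H)| = |V(G_{SR})| \cdot |V(H_{SR})| = |M(G)| \cdot |M(H)|$, which is exactly the identity asserted.

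Finally I would apply Proposition~\ref{uppersdimF} to the connected graph $G \square H$ (note that $G \square H$ is connected whenever $G$ and $H$ are, so the hypothesis of that proposition is met) to obtain $sdim_f(G \square H) \le \frac{1}{2}|M(G \square H)| = \frac{1}{2}|M(G)| \cdot |M(H)|$, as desired. There is essentially no obstacle in this argument: the result is an immediate corollary, and its sole nontrivial ingredient is the cardinality computation, which itself rests entirely on the isomorphism $(G \square H)_{SR} \cong G_{SR} \times H_{SR}$ of Theorem~\ref{cartesian}. Accordingly, I would keep the write-up to two or three sentences, emphasizing the translation of the vertex count through the direct-product structure.
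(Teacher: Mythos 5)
Your proof is correct and follows the paper's own route exactly: the paper obtains this corollary by noting $|M(G \square H)|=|M(G)| \cdot |M(H)|$ and applying Proposition~\ref{uppersdimF} to the connected graph $G \square H$. Your justification of the cardinality identity, namely $M(G \square H)=V\bigl((G \square H)_{SR}\bigr)=V(G_{SR} \times H_{SR})=V(G_{SR}) \times V(H_{SR})$ via Theorem~\ref{cartesian}, is precisely the reasoning the paper leaves implicit.
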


The next result follows from Proposition~\ref{matchingN}, Theorem~\ref{cartesian}, Lemma~\ref{d_matching-K_n}, and Corollary~\ref{cor_C}.

\begin{cor}\label{cc_kn}
For any graph $G$ and any integer $n\ge 2$, $n\cdot \nu(G_{SR})\le sdim_f(G \square K_n) \le \frac{n|M(G)|}{2}$.
\end{cor}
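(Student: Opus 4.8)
The plan is to chain together the four cited results, the only genuinely new ingredient being the identification of $(K_n)_{SR}$. First I would observe that every pair of distinct vertices in $K_n$ is mutually maximally distant: since $diam(K_n)=1$, for distinct $u,v$ we have $d(u,v)=1\ge d(w,v)$ for every $w\in N_{K_n}(u)$, and symmetrically, so $u$ MMD $v$. Hence $M(K_n)=V(K_n)$, giving $|M(K_n)|=n$, and in fact $(K_n)_{SR}\cong K_n$. This observation is what lets the generic structural and matching results specialize to the factor $K_n$.

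For the lower bound, I would apply Theorem~\ref{cartesian} with $H=K_n$ to obtain $(G\square K_n)_{SR}\cong G_{SR}\times (K_n)_{SR}\cong G_{SR}\times K_n$. Proposition~\ref{matchingN} then gives $sdim_f(G\square K_n)\ge \nu((G\square K_n)_{SR})=\nu(G_{SR}\times K_n)$, and Lemma~\ref{d_matching-K_n}, read with the graph $G_{SR}$ in the role of its own ``$G$'', yields $\nu(G_{SR}\times K_n)\ge n\cdot \nu(G_{SR})$. Concatenating these inequalities produces the desired lower bound $n\cdot \nu(G_{SR})\le sdim_f(G\square K_n)$.

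For the upper bound, I would invoke the identity $|M(G\square H)|=|M(G)|\cdot|M(H)|$ recorded just before Corollary~\ref{cor_C} (which itself is Proposition~\ref{uppersdimF} specialized to Cartesian products). Taking $H=K_n$ and substituting $|M(K_n)|=n$ gives $sdim_f(G\square K_n)\le \tfrac{1}{2}|M(G)|\cdot|M(K_n)|=\tfrac{n|M(G)|}{2}$, completing the two-sided estimate.

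I do not expect any real obstacle: the argument is essentially a substitution of $(K_n)_{SR}\cong K_n$ into the structural isomorphism of Theorem~\ref{cartesian}, followed by routine applications of the matching bounds and the boundary-vertex upper bound. The one point that requires care is bookkeeping in the lower bound, namely that Lemma~\ref{d_matching-K_n} be instantiated with its variable graph set to $G_{SR}$ (not to $G$), so that the $K_n$ arising from $(K_n)_{SR}$ lines up with the $K_n$ appearing in that lemma; once this is done correctly, both inequalities follow immediately.
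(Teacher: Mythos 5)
Your proof is correct and follows exactly the route the paper intends: the paper derives this corollary by citing Proposition~\ref{matchingN}, Theorem~\ref{cartesian}, Lemma~\ref{d_matching-K_n}, and Corollary~\ref{cor_C}, which is precisely the chain you assemble, with the identification $(K_n)_{SR}\cong K_n$ (hence $|M(K_n)|=n$) being the implicit step you correctly make explicit. No gaps; your care in instantiating Lemma~\ref{d_matching-K_n} with $G_{SR}$ in place of $G$ is exactly right.
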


The next lemma is useful for determining graphs achieving the upper bound of Corollary~\ref{cor_C}.

\begin{lemma}\emph{\cite{CxC}}\label{direct_cxc}
Let $G$ and $H$ be two Hamiltonian graphs of order $n$ and $m$, respectively. If $n$ or $m$ is odd, then $G \times H$ is a Hamiltonian graph.
\end{lemma}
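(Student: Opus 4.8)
The plan is to reduce the statement to the single case of a direct product of two cycles and then to exhibit a Hamiltonian cycle there explicitly. Since $G$ and $H$ are Hamiltonian, fix Hamiltonian cycles $C_G \subseteq G$ and $C_H \subseteq H$; as abstract graphs $C_G \cong C_n$ and $C_H \cong C_m$. Because $E(C_G) \subseteq E(G)$ and $E(C_H) \subseteq E(H)$, the product $C_G \times C_H \cong C_n \times C_m$ is a \emph{spanning} subgraph of $G \times H$, with vertex set $V(G) \times V(H)$. Thus any Hamiltonian cycle of $C_n \times C_m$ is at once a Hamiltonian cycle of $G \times H$, and it suffices to prove that $C_n \times C_m$ is Hamiltonian whenever $n$ is odd (the case $m$ odd being symmetric, as $G \times H \cong H \times G$).

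I would model $C_n \times C_m$ on the vertex set $\mathbb{Z}_n \times \mathbb{Z}_m$, where $(i,j)$ is adjacent exactly to its four ``diagonal'' neighbours $(i \pm 1, j \pm 1)$. Consider the move $D_{+}=(+1,+1)$: iterating $D_{+}$ partitions the vertices into orbits $\{(i+t, j+t) : t \in \mathbb{Z}\}$, each a cycle of length $\mathrm{lcm}(n,m)$, and there are exactly $d=\gcd(n,m)$ of them, distinguished by the invariant $i-j \pmod d$ (well-defined since $d \mid n$ and $d \mid m$). When $d=1$ this single diagonal is already a Hamiltonian cycle and we are done; in general the $d$ diagonal cycles must be spliced into one.

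The linking is carried out with the complementary move $D_{-}=(+1,-1)$, which sends an orbit labelled $c$ to the orbit labelled $c+2 \pmod d$. This is the one place the hypothesis enters: since $d \mid n$ and $n$ is odd, $d$ is odd, so $\gcd(2,d)=1$ and the sequence $0,2,4,\dots$ runs through all residues modulo $d$. Moreover, for suitable $i,j$ the four vertices $(i,j)$, $(i+1,j+1)$, $(i+2,j)$, $(i+1,j-1)$ form a $4$-cycle whose two ``long'' edges lie on the orbit-$c$ and orbit-$(c+2)$ diagonals (both $D_{+}$ edges) while its two remaining edges are $D_{-}$ edges; deleting the two $D_{+}$ edges and inserting the two $D_{-}$ edges fuses these two diagonal cycles into a single cycle on their combined vertex set. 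Performing this parallelogram swap successively along the chain $0 \to 2 \to 4 \to \cdots$ of all $d$ orbits (that is, through $d-1$ merges) produces one spanning cycle, i.e.\ a Hamiltonian cycle of $C_n \times C_m$.

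I expect the main obstacle to be the bookkeeping in the iterated merging step: one must check that at each stage a parallelogram can be chosen whose orbit-$c$ edge has not already been removed by an earlier swap (there is ample room, since each orbit is used in at most two merges while each diagonal has length $\mathrm{lcm}(n,m) \ge 3$), and that each swap genuinely fuses two cycles into one rather than splitting a cycle in two. Verifying that $i-j \pmod d$ is a well-defined orbit invariant and that $D_{-}$ shifts it by $2$ is routine modular arithmetic, and the connectivity of $C_n \times C_m$ guaranteed by Theorem~\ref{kro}(a) serves as a useful consistency check that the construction cannot get stuck.
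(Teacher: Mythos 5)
Your argument is correct, but there is nothing in the paper to compare it against: the paper states this lemma as a quoted result from Gravier~\cite{CxC} and uses it as a black box (in Proposition~\ref{cor_HxH}), giving no proof of its own. What you have done is supply a self-contained, elementary proof of the cited theorem. The key steps all check out: the reduction to $C_n \times C_m$ is valid because the direct product of spanning subgraphs is a spanning subgraph of the product; the orbits of $D_{+}=(+1,+1)$ are indeed cycles of length $\mathrm{lcm}(n,m)$ indexed by $i-j \pmod d$ with $d=\gcd(n,m)$ (generalized Chinese remainder theorem); and the hypothesis enters exactly where you place it, since $d \mid n$ with $n$ odd forces $d$ odd, so the $D_{-}$ move, which shifts the invariant by $2$, chains all $d$ diagonals together. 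The bookkeeping you flag as the main obstacle resolves cleanly under your chain order $0 \to 2 \to \cdots \to 2(d-1)$: at the $k$-th merge the diagonal of orbit $2k$ is still untouched and vertex-disjoint from the accumulated cycle, so deleting one $D_{+}$ edge from each of two \emph{disjoint} cycles and inserting the two $D_{-}$ edges of the parallelogram always fuses and can never split; and since orbit $2(k-1)$ has lost only the single edge removed at the previous merge, while each diagonal has $\mathrm{lcm}(n,m) \ge 3$ edges, an unused parallelogram is always available. (One should also note $d \ne 2$, so orbits $c$ and $c+2$ really are distinct; this is automatic since $d$ is odd.) In terms of trade-offs: the paper's citation buys brevity and defers correctness to the literature, while your construction makes the lemma verifiable within the paper by the same diagonal-splicing technique that underlies the original result, at the cost of a page of modular arithmetic and cycle surgery.
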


\begin{proposition}\label{cor_HxH}
If $G_{SR}$ and $H_{SR}$ are Hamiltonian graphs, $sdim_f(G \square H)=\frac{1}{2}|M(G)| \cdot |M(H)|$.
\end{proposition}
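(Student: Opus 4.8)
The plan is to exhibit a regular spanning subgraph of $(G \square H)_{SR}$ and then invoke Corollary~\ref{v_reg_sub}, which converts the existence of such a subgraph directly into the exact value $\tfrac{1}{2}|M(G \square H)|$. By Theorem~\ref{cartesian} we have $(G \square H)_{SR} \cong G_{SR} \times H_{SR}$, so it suffices to produce a regular subgraph of $G_{SR} \times H_{SR}$ whose vertex set is all of $V(G_{SR}) \times V(H_{SR})$.

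First I would fix a Hamiltonian cycle $C_G$ of $G_{SR}$ and a Hamiltonian cycle $C_H$ of $H_{SR}$; these exist by hypothesis, and each is a $2$-regular spanning subgraph of its host. The key structural fact is that the direct product is monotone under subgraph inclusion: if $A \subseteq B$ and $C \subseteq D$, then $A \times C \subseteq B \times D$, since an edge $(a,c)(a',c')$ of $A \times C$ forces $aa' \in E(A) \subseteq E(B)$ and $cc' \in E(C) \subseteq E(D)$. Applying this to $C_G \subseteq G_{SR}$ and $C_H \subseteq H_{SR}$ gives $C_G \times C_H \subseteq G_{SR} \times H_{SR}$, and because $C_G$ and $C_H$ are spanning, $V(C_G \times C_H) = V(G_{SR}) \times V(H_{SR}) = V\big((G \square H)_{SR}\big)$.

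Next I would compute the degree in the product. A vertex $(u,v)$ of $C_G \times C_H$ is adjacent to exactly those $(u',v')$ for which $u'$ is a neighbor of $u$ in $C_G$ and $v'$ is a neighbor of $v$ in $C_H$; as $C_G$ and $C_H$ are $2$-regular, this yields $2 \cdot 2 = 4$ neighbors, so $C_G \times C_H$ is $4$-regular. Hence $(G \square H)_{SR}$ contains a $4$-regular subgraph on its full vertex set, and Corollary~\ref{v_reg_sub} gives at once $sdim_f(G \square H) = \tfrac{1}{2}|M(G \square H)| = \tfrac{1}{2}|M(G)| \cdot |M(H)|$, using $|M(G \square H)| = |M(G)| \cdot |M(H)|$.

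The point I would flag is that this argument deliberately sidesteps the question of whether $G_{SR} \times H_{SR}$ is itself Hamiltonian. Lemma~\ref{direct_cxc} guarantees Hamiltonicity of a product of Hamiltonian graphs only when one of the two orders is odd; when both $|M(G)|$ and $|M(H)|$ are even, Theorem~\ref{kro}(b) forces $C_G \times C_H$ (and indeed $G_{SR} \times H_{SR}$) to split into two components, so no single spanning cycle is available. The main subtlety is therefore recognizing that Corollary~\ref{v_reg_sub} requires only a regular spanning subgraph, not a connected one, and that the direct product of two $2$-regular graphs is $4$-regular irrespective of connectivity; this is exactly what allows one uniform argument to cover all parities. (When an order is odd one could instead combine Lemma~\ref{direct_cxc} with Corollary~\ref{cor_C}, but the regularity route makes that case distinction unnecessary.)
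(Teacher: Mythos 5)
Your proof is correct, and it takes a genuinely cleaner route than the paper's at the key step. Both arguments pass through Theorem~\ref{cartesian}, Corollary~\ref{v_reg_sub}, and the identity $|M(G \square H)|=|M(G)|\cdot|M(H)|$, but the paper manufactures its regular spanning subgraph by a parity case analysis: when $|M(G)|$ or $|M(H)|$ is odd it invokes Gravier's theorem (Lemma~\ref{direct_cxc}) to conclude that $G_{SR}\times H_{SR}$ is Hamiltonian, hence has a $2$-regular spanning subgraph, and when both orders are even it asserts (without justification) that $G_{SR}\times H_{SR}$ contains two disjoint cycles which together span it. You bypass both the case split and the Hamiltonicity machinery by noting that $C_G\times C_H$ is already a $4$-regular spanning subgraph of $G_{SR}\times H_{SR}$; since Corollary~\ref{v_reg_sub} requires only a regular spanning subgraph --- neither connectivity nor degree $2$ --- this single observation finishes the proof, and it incidentally repairs the one soft spot in the paper's argument (the unproved two-cycle claim in the even case). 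One small correction to your closing commentary: Theorem~\ref{kro}(b) does force $C_G\times C_H$ to split into two components, because even cycles are bipartite, but it does not force $G_{SR}\times H_{SR}$ itself to split when both orders are even; for instance $G_{SR}=H_{SR}=K_4$ has even order yet contains odd cycles, so that product is connected by Theorem~\ref{kro}(a). This slip lies entirely in your side remark and does not affect the validity of the proof.
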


\begin{proof}
Let $G_{SR}$ and $H_{SR}$ be Hamiltonian graphs of order $n$ and $m$, respectively. If $n$ or $m$ is odd, $G_{SR} \times H_{SR}$ is a Hamiltonian graph by Lemma~\ref{direct_cxc}, and hence $G_{SR} \times H_{SR}$ contains a cycle $C_{nm}$ as a subgraph; thus, $sdim_f(G \square H)=\frac{1}{2}|M(G)| \cdot |M(H)|$ by Corollary~\ref{v_reg_sub} and Theorem~\ref{cartesian}.
If both $n$ and $m$ are even, $G_{SR} \times H_{SR}$ contains two disjoint union of $\frac{nm}{2}$-cycles as a subgraph; thus, $sdim_f(G \square H)=\frac{1}{2}|M(G)| \cdot |M(H)|$ by Corollary~\ref{v_reg_sub} and Theorem~\ref{cartesian}.~\hfill
\end{proof}

Next, we indicate some Cartesian product graphs achieving the upper bound of Corollary~\ref{cc_kn}.

\begin{cor}
Let $n\ge 2$ be an integer.
\begin{itemize}
\item[(a)] If $G_{SR}$ is a Hamiltonian graph, then $sdim_f(G \square K_n)=\frac{n |M(G)|}{2}$. 
\item [(b)] For any tree $T$ of order at least two, $sdim_f(T \square K_n)=\frac{n \sigma(T)}{2}$. 
\item [(c)] Let $K_{r_1,\ldots,r_k}$ be a complete $k$-partite graph, where $k \ge 2$. If $r_i \ge 2$ for each $i\in \{1,2,\ldots,k\}$, or $r_j=1$ for at least two different $j \in \{1, 2, \ldots, k\}$, then $sdim_f(K_{r_1,\ldots,r_k} \square K_n)=\frac{n}{2}\sum_{i=1}^k r_i$.
\end{itemize}
\end{cor}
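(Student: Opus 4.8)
The plan is to exploit two facts repeatedly. By Theorem~\ref{cartesian}, $(G \square K_n)_{SR} \cong G_{SR} \times (K_n)_{SR}$, and since every pair of vertices of $K_n$ is mutually maximally distant we have $(K_n)_{SR} \cong K_n$ and $|M(K_n)| = n$; consequently $(G \square K_n)_{SR} \cong G_{SR} \times K_n$ and $|M(G \square K_n)| = |M(G)| \cdot n$. The common upper bound $sdim_f(G \square K_n) \le \frac{n|M(G)|}{2}$ in all three parts is already recorded in Corollary~\ref{cc_kn} (equivalently Corollary~\ref{cor_C}), so in each part the real work is the matching lower bound, obtained by locating an appropriate spanning regular subgraph of $G_{SR} \times K_n$. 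The key elementary observation I would isolate first is that the direct product of two complete graphs $K_a \times K_b$ is $(a-1)(b-1)$-regular (a vertex $(i,j)$ is adjacent exactly to those $(i',j')$ with $i' \ne i$ and $j' \ne j$); this is what drives parts (b) and (c).

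For part (a), I would split on $n$. For $n \ge 3$, $(K_n)_{SR} \cong K_n$ is Hamiltonian and $G_{SR}$ is Hamiltonian by hypothesis, so Proposition~\ref{cor_HxH} applies verbatim and yields $sdim_f(G \square K_n) = \frac{1}{2}|M(G)| \cdot n$. The case $n = 2$ must be handled separately, because $K_2$ is not Hamiltonian and so Proposition~\ref{cor_HxH} does not apply. Here I would use that $G_{SR}$, being Hamiltonian of order $m = |M(G)|$, contains a spanning cycle $C_m$; then $C_m \times K_2 \subseteq G_{SR} \times K_2 \cong (G \square K_2)_{SR}$ is a spanning subgraph, and it equals $C_{2m}$ when $m$ is odd and $2C_m$ when $m$ is even (the latter by Corollary~\ref{d_K2}), hence is $2$-regular on the full vertex set. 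Corollary~\ref{v_reg_sub} then gives $sdim_f(G \square K_2) = \frac{1}{2}|M(G \square K_2)| = m = \frac{2|M(G)|}{2}$.

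Parts (b) and (c) I would treat uniformly through the regularity of $K_a \times K_b$ together with Proposition~\ref{v_regular}. For (b), I would first record that in a tree the only vertices maximally distant from some other vertex are the leaves and any two leaves form a mutually maximally distant pair, so $T_{SR} \cong K_{\sigma(T)}$; then $(T \square K_n)_{SR} \cong K_{\sigma(T)} \times K_n$ is $(\sigma(T)-1)(n-1)$-regular with degree at least $1$, and Proposition~\ref{v_regular} gives $sdim_f(T \square K_n) = \frac{1}{2}|M(T \square K_n)| = \frac{n\sigma(T)}{2}$. For (c), the crux is to compute $(K_{r_1,\ldots,r_k})_{SR}$: a short distance analysis shows that two vertices in a common part of size at least $2$ are always mutually maximally distant, while two vertices in distinct parts $V_i, V_j$ are mutually maximally distant if and only if $r_i = r_j = 1$. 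Under either stated hypothesis this forces $M(G) = V(G)$ (so $|M(G)| = \sum r_i$) and shows $(K_{r_1,\ldots,r_k})_{SR}$ is a disjoint union of cliques, each of size at least $2$ --- namely $\bigsqcup_{i} K_{r_i}$ when every $r_i \ge 2$, and $K_s \sqcup \bigsqcup_{r_i \ge 2} K_{r_i}$ (where $s \ge 2$ is the number of singleton parts) otherwise. Writing this as $\bigsqcup_j K_{c_j}$ with every $c_j \ge 2$, we get $(K_{r_1,\ldots,r_k} \square K_n)_{SR} \cong \bigsqcup_j (K_{c_j} \times K_n)$, each summand regular of degree $(c_j-1)(n-1) \ge 1$; Proposition~\ref{v_regular} then delivers $sdim_f(K_{r_1,\ldots,r_k} \square K_n) = \frac{1}{2}|M(G)| \cdot n = \frac{n}{2}\sum_{i=1}^k r_i$.

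The main obstacle is the structural determination of $(K_{r_1,\ldots,r_k})_{SR}$ in part (c): one must carefully verify the mutually-maximally-distant condition across parts (which hinges on whether the relevant parts are singletons), and then check that precisely the two stated hypotheses are what guarantee $M(G) = V(G)$ and, crucially, that $G_{SR}$ has no isolated vertex --- so that every clique $K_{c_j}$ satisfies $c_j \ge 2$ and the resulting product is regular of positive degree, as Proposition~\ref{v_regular} requires. (The excluded configuration, exactly one singleton part, is exactly the one where the unique singleton vertex drops out of $M(G)$.) A secondary, minor obstacle is the $n = 2$ subcase of part (a), where the failure of $K_2$ to be Hamiltonian forces the separate spanning-subgraph argument above rather than a direct appeal to Proposition~\ref{cor_HxH}.
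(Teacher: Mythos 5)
Your proof is correct and rests on the same foundations as the paper's --- Theorem~\ref{cartesian}, the upper bound of Corollaries~\ref{cor_C} and~\ref{cc_kn}, and lower bounds via regular spanning subgraphs of the strong resolving graph --- but your route through parts (b) and (c) is genuinely more uniform than the paper's. The paper proves (b) by a three-way case split: paths via the matching bound of Corollary~\ref{cc_kn}, non-paths with $n=2$ via Corollary~\ref{v_reg_sub}, and non-paths with $n\ge 3$ via the Hamiltonicity result Proposition~\ref{cor_HxH}; in (c) it likewise treats the subcase $G\cong K_k$ separately by vertex-transitivity (Theorem~\ref{v_transitive}). Your single observation that $K_a\times K_b$ is $(a-1)(b-1)$-regular, so that all components of $(T\square K_n)_{SR}$ and of $(K_{r_1,\ldots,r_k}\square K_n)_{SR}$ are regular of positive degree once $a,b\ge 2$, lets Proposition~\ref{v_regular} dispatch (b) and every subcase of (c) at once, with Hamiltonicity entering only where it is a hypothesis, namely in (a) (where your $n\ge 3$ argument coincides with the paper's appeal to Proposition~\ref{cor_HxH}). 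Your $n=2$ subcase of (a) is in fact slightly more careful than the paper's: the paper asserts that $G_{SR}\times K_2$ contains $C_{2m}$ or $2C_m$ (with $m=|M(G)|$) according to whether $G_{SR}$ has an odd cycle, which is loose --- an odd cycle elsewhere in $G_{SR}$ does not by itself yield a spanning cycle of the product --- whereas restricting first to the Hamiltonian cycle $C_m\subseteq G_{SR}$ and splitting on the parity of $m$, as you do, produces the required $2$-regular spanning subgraph cleanly before invoking Corollary~\ref{v_reg_sub}. What the paper's route buys is reuse of the already-established Proposition~\ref{cor_HxH}; what yours buys is economy: one elementary regularity fact replaces the Hamiltonicity and matching case analysis.
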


\begin{proof}
(a) Let $G_{SR}$ be a Hamiltonian graph of order $m$. If $n \ge 3$, then $K_n$ is also a Hamiltonian graph, and hence $sdim_f(G \square K_n)=\frac{1}{2}|M(G)| \cdot |M(K_n)|=\frac{n}{2}|M(G)|$ by Proposition~\ref{cor_HxH}. If $n=2$, then $(G \square K_2)_{SR} \cong G_{SR} \times K_2$ contains $C_{2m}$ (if $G_{SR}$ contains an odd cycle) or $2C_m$ (if $G_{SR}$ contains no odd cycle) by Theorem~\ref{kro} and Corollary~\ref{d_K2}; thus, $sdim_f(G \square K_2)=|M(G)|$ by Corollary~\ref{v_reg_sub}.
 
(b) Let $T$ be a tree of order at least two. Notice that $T_{SR} \cong K_{\sigma(T)}$, and $T_{SR} \cong K_2$ if and only if $T$ is a path. If $T$ is a path, then $\sigma(T)=2$ and $\nu(T_{SR})=1$, and thus $sdim_f(T \square K_n)=n=\frac{n}{2} \sigma(T)$ by Corollary~\ref{cc_kn}. 

Next, suppose that $T$ is not a path; then $\sigma(T) \ge 3$. If $n=2$, then $(T \square K_2)_{SR} \cong K_{\sigma(T)} \times K_2$ contains $C_{2\sigma(T)}$ as a subgraph, and thus, $sdim_f(T \square K_2)=\frac{1}{2} |M(T)| \cdot |M(K_2)|=\sigma(T)$ by Corollary~\ref{v_reg_sub}. If $n \ge 3$, then both $T_{SR} \cong K_{\sigma(T)}$ and $(K_n)_{SR} \cong K_n$ are Hamiltonian graphs, and hence $sdim_f(G \square H)=\frac{1}{2}|M(T)| \cdot |M(K_n)|=\frac{n}{2} \sigma(T)$ by Proposition~\ref{cor_HxH}.   

(c) For $k \ge 2$, let $G=K_{r_1,\ldots,r_k}$ be a complete $k$-partite graph of order $r=\sum_{i=1}^{k}r_i$. Let $r_k \ge \ldots \ge r_{2} \ge r_1$ by relabeling if necessary.

First, suppose $r_1 \ge 2$. Then $G_{SR} \cong \cup_{i=1}^{k}K_{r_i}$ and $(G \square K_n)_{SR} \cong G_{SR} \times K_n \cong \cup_{i=1}^{k} (K_{r_i} \times K_n)$. Since each connected component of $(G \square K_n)_{SR}$ is a regular graph, $sdim_f(G \square K_n)=\frac{n|M(G)|}{2}=\frac{n}{2}\sum_{i=1}^{k}r_i$ by Proposition~\ref{v_regular}.

Second, suppose that $r_1=r_2=\ldots, r_s=1$ for $s \ge 2$. If $s=k$, then $G \cong K_k$ and $sdim_f(G \square K_n)=sdim_f(K_k \square K_n)=\frac{kn}{2}=\frac{n}{2}\sum_{i=1}^{k}r_i$ by Theorem~\ref{v_transitive}. If $s < k$ and $r_{s+1}>1$, then $G_{SR} \cong K_s \cup(\cup_{i=s+1}^{k}K_{r_i})$ and $(G \square K_n)_{SR} \cong G_{SR} \times K_n \cong (K_s \times K_n) \cup ( (\cup_{i=s+1}^{k}K_{r_i}) \times K_n)$. Since each connected component of $(G \square K_n)_{SR}$ is a regular graph, $sdim_f(G \square K_n)=\frac{n|M(G)|}{2}=\frac{n}{2}\sum_{i=1}^{k}r_i$ by Proposition~\ref{v_regular}.~\hfill
\end{proof}

Although a large number of Cartesian product graphs achieve equality in Corollary \ref{cor_C}, we will show that 
$\frac{|M(G)| \cdot |M(H)|}{2} - sdim_f(G \square H)$ can be arbitrarily large by providing the following example.

\begin{remark}\label{G_q_cartesian}
There is a family of graphs $G$ such that $\frac{|M(G)| \cdot |M(P_n)|}{2} - sdim_f(G \square P_n)$ can be arbitrarily large. Let $\mathcal{F}^*$ be a family of graphs $G_q$ ($q \ge 2$) constructed as described in the parts (i), (ii), and (v) of Remark~\ref{graph_G_q}, and let $P_n$ be an $n$-path given by $w_1w_2 \ldots w_n$ for $n \ge 2$. We will show that $\frac{|M(G_q)| \cdot |M(P_n)|}{2}=3q+1$ and $sdim_f(G_q \square P_n)=2q+2$ for $q,n \ge 2$.

First, notice the following in constructing $(G_q)_{SR}$ for $q \ge 2$:
\begin{itemize}
\item[(1)] For each $i\in \{1,2,\ldots, q\}$, the vertex $a_i$ is MMD with each vertex in $(\cup_{j=1}^{q}\{c_j\})-\{c_i\}$; 
\item[(2)] Neither any two vertices in $\cup_{i=1}^{q}\{a_i\}$ nor any two vertices in $\cup_{i=1}^{q}\{c_i\}$ form an MMD pair, and $a_jc_j \not\in E((G_q)_{SR})$ for each $j\in \{1,2,\ldots, q\}$; 
\item[(3)] For each $i \in \{1, 2,\ldots, q\}$, the vertex $b_i$ is MMD only with the vertex $x$ and vice versa;
\item[(4)] $\{a_0,b_0, c_0\} \cap M(G_q)=\emptyset$.
\end{itemize}
If we denote by $K^-_{q,q}$ a component of $(G_q)_{SR}$ that satisfies (1) and (2) of the above construction for $(G_q)_{SR}$ (i.e., $K^-_{q,q}$ is a complete bipartite graph $K_{q, q}$ minus a perfect matching), then $(G_q)_{SR}$ consists of two components, $K_{1,q}$ and $K^-_{q,q}$.

Next, we consider the Cartesian product graph $G_q\square P_n$, where $q \ge 2$ and $n\ge 2$; then $(G_q \square P_n)_{SR} \cong (G_q)_{SR} \times P_2$ by Theorem~\ref{cartesian}. Since both $(G_q)_{SR}$ and $P_2$ are bipartite graphs, by Theorem~\ref{kro} and Corollary~\ref{d_K2}, $(G_q)_{SR} \times P_2$ is a disconnected graph with four components and $(G_q)_{SR} \times P_2 \cong 2K^-_{q,q} \cup 2 K_{1,q}$. We will show that $sdim_f(G_q \square P_n)=2q+2$ for $q,n \ge 2$. By Proposition~\ref{matchingN}, $sdim_f(G_q\square P_n)\ge \nu((G_q\square P_n)_{SR})=\nu ((G_q)_{SR} \times P_2)=2 \nu(K^-_{q,q})+2 \nu(K_{1,q})=2q+2$. On the other hand, for $v=(u,w) \in V(G_q\square P_n)$, let $f:V(G_q\square P_n)\rightarrow [0,1]$ be a function defined by
\begin{equation*}
f(v)=\left\{
\begin{array}{ll}
1 & \mbox{if $u=x$ and $w \in \{w_1, w_n\}$},\\
\frac{1}{2} & \mbox{if $u \in \cup_{i=1}^{q}\{a_i, c_i\}$ and $w \in \{w_1, w_n\}$},\\
0 & \mbox{otherwise}.
\end{array}\right.
\end{equation*}
Then $f$ is a strong resolving function of $G_q \square P_n$ with $g(V(G_q \square P_n))=2+2q$, and hence $sdim_f(G_q \square P_n)\le 2q+2$. Thus, we have $sdim_f(G_q \square P_n)=2q+2$ for $q,n \ge 2$.

Since $|M(G_q)|=3q+1$ and $|M(P_n)|=2$ for $q,n \ge 2$, we have $\frac{|M(G_q)| \cdot |M(P_n)|}{2}=3q+1$. Now, for $q,n \ge 2$, notice that $\frac{|M(G_q)| \cdot |M(P_n)|}{2} - sdim_f(G_q \square P_n)=(3q+1)-(2q+2)=q-1$ can be arbitrarily large. 
\end{remark}

From the family of Cartesian product graphs $G_q \square P_n$ considered in Remark~\ref{G_q_cartesian}, by taking $q=k+1 \ge 2$, we have the following realization result.

\begin{cor}
For any positive integer $k$, there exists a Cartesian product graph $G \square H$ such that $\frac{|M(G_q)| \cdot |M(P_n)|}{2} - sdim_f(G_q \square P_n)=k$.
\end{cor}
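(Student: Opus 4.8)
The plan is to exhibit an explicit member of the family $\mathcal{F}^*$ realizing the prescribed gap, relying entirely on the computations already carried out in Remark~\ref{G_q_cartesian}. Since $k$ is a positive integer, I would set $q = k+1$, which guarantees $q \ge 2$ so that $G_q \in \mathcal{F}^*$ is well-defined; I would also fix any $n \ge 2$, say $n = 2$, so that $P_n$ is a genuine path on at least two vertices and the Cartesian product $G_q \square P_n$ falls squarely within the scope of Remark~\ref{G_q_cartesian}. The choice $q = k+1$ is exactly the substitution flagged in the sentence preceding the corollary.

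Next I would substitute the three quantities established in Remark~\ref{G_q_cartesian}: namely $|M(G_q)| = 3q+1$, $|M(P_n)| = 2$, and $sdim_f(G_q \square P_n) = 2q+2$, all valid for $q, n \ge 2$. Plugging these in gives
\[
\frac{|M(G_q)| \cdot |M(P_n)|}{2} - sdim_f(G_q \square P_n) = (3q+1) - (2q+2) = q - 1 = (k+1) - 1 = k,
\]
as desired. Taking $G = G_{k+1}$ and $H = P_2$ then furnishes the required Cartesian product graph, completing the argument.

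There is no substantive obstacle here: the entire analytic content has already been discharged in Remark~\ref{G_q_cartesian}, including the structure of $(G_q)_{SR}$ as $K_{1,q} \cup K^-_{q,q}$, the identification $(G_q \square P_n)_{SR} \cong (G_q)_{SR} \times P_2 \cong 2K^-_{q,q} \cup 2K_{1,q}$ via Theorem~\ref{cartesian}, the matching lower bound $sdim_f(G_q \square P_n) \ge \nu((G_q)_{SR} \times P_2) = 2q+2$ from Proposition~\ref{matchingN}, and the explicit strong resolving function yielding the matching upper bound. The only point requiring a moment's care is the range condition $q \ge 2$ on which Remark~\ref{G_q_cartesian} rests: I must simply confirm that $q = k+1 \ge 2$ holds for every positive integer $k$, which is immediate and, in particular, covers the boundary case $k = 1$ (where $q = 2$).
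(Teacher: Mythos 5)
Your proposal is correct and matches the paper's own argument: the paper likewise takes $q = k+1 \ge 2$ and invokes the quantities $|M(G_q)| = 3q+1$, $|M(P_n)| = 2$, and $sdim_f(G_q \square P_n) = 2q+2$ established in Remark~\ref{G_q_cartesian} to obtain the gap $q-1 = k$. Your additional care about fixing a concrete $n \ge 2$ and checking the boundary case $k=1$ is sound but does not change the substance.
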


The problem of characterizing Cartesian product graphs $G \square H$ satisfying the upper bound of Corollary \ref{cor_C} -- more generally, characterizing graphs achieving equality in Proposition~\ref{uppersdimF} -- remains open. Now, we provide bounds for $sdim_f(G \square H)$ in terms of $sdim_f(G)$ and $sdim_f(H)$.

\begin{theorem}\label{thm_Cartesian}
Let $G$ and $H$ be connected graphs of order at least two. Then
$$\max\{2sdim_f(G), 2sdim_f(H)\} \le sdim_f(G \square H) \le \min\{|M(G)|sdim_f(H), |M(H)|sdim_f(G)\},$$
and both bounds are sharp.
\end{theorem}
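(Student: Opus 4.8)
The plan is to reduce $sdim_f$ to a purely $\Gamma_{SR}$-level quantity and then exploit Theorem~\ref{cartesian}. The crux is a characterization I would first isolate as a lemma: \emph{a function $g:V(\Gamma)\to[0,1]$ is a strong resolving function of a connected graph $\Gamma$ if and only if $g(u)+g(v)\ge 1$ for every edge $uv$ of $\Gamma_{SR}$ (equivalently, for every MMD pair).} The forward direction is immediate from Observation~\ref{observation-1}(b). For the reverse direction I would show that, for every pair of distinct vertices $x,y$, the set $S\{x,y\}$ contains some MMD pair $\{u,v\}$: choose $u$ maximally distant from $y$ with $x$ on a $y$--$u$ geodesic (extend a $y$--$x$ geodesic beyond $x$), then extend the resulting $u$--$y$ geodesic past $y$ to a vertex $v$ maximally distant from $u$. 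A short distance computation shows the geodesic order is $u,\dots,x,\dots,y,\dots,v$, so both $u,v\in S\{x,y\}$, and the maximality of $u$ from $y$ forces $u$ to be maximally distant from $v$ as well; hence $u$ MMD $v$ and $g(S\{x,y\})\ge g(u)+g(v)\ge 1$. An immediate consequence is that $sdim_f(\Gamma)$ equals the fractional vertex cover number of $\Gamma_{SR}$, so that $sdim_f$ depends only on the strong resolving graph.

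For the lower bound it suffices, by symmetry, to prove $sdim_f(G\square H)\ge 2\,sdim_f(G)$. Let $f$ be a minimum strong resolving function of $G\square H$, and fix an edge $u'v'$ of $H_{SR}$ (which exists since $H$ is connected of order at least two). By Theorem~\ref{cartesian}, for every edge $ab$ of $G_{SR}$ both $\{(a,u'),(b,v')\}$ and $\{(a,v'),(b,u')\}$ are MMD pairs of $G\square H$, so $f(a,u')+f(b,v')\ge 1$ and $f(a,v')+f(b,u')\ge 1$. Define $g_1(a)=f(a,u')$ and $g_2(a)=f(a,v')$ for $a\in M(G)$ and $0$ otherwise, and set $g=\tfrac12(g_1+g_2)$. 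Adding the two inequalities gives $g(a)+g(b)\ge 1$ for every edge $ab$ of $G_{SR}$, so by the characterization $g$ is a strong resolving function of $G$ and $|g_1|+|g_2|=2|g|\ge 2\,sdim_f(G)$. Since $|g_1|+|g_2|=\sum_{a\in M(G)}\big(f(a,u')+f(a,v')\big)\le |f|$, I obtain $2\,sdim_f(G)\le |f|=sdim_f(G\square H)$.

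For the upper bound it suffices, by symmetry, to prove $sdim_f(G\square H)\le |M(H)|\,sdim_f(G)$. Take a minimum strong resolving function $g$ of $G$, which I may assume vanishes off $M(G)$, and lift it by setting $f(u,v)=g(u)$ when $v\in M(H)=V(H_{SR})$ and $f(u,v)=0$ otherwise. Every edge of $(G\square H)_{SR}\cong G_{SR}\times H_{SR}$ has the form $\{(a,u'),(b,v')\}$ with $ab\in E(G_{SR})$ and $u',v'\in M(H)$, whence $f(a,u')+f(b,v')=g(a)+g(b)\ge 1$; by the characterization $f$ is a strong resolving function of $G\square H$. Its weight is $\sum_{v\in M(H)}\sum_{u}g(u)=|M(H)|\,sdim_f(G)$, which gives the bound.

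Finally, for sharpness I would invoke known evaluations. The lower bound is attained by $G=P_s$, $H=P_t$: Theorem~\ref{sdimFthm}(f) gives $sdim_f(P_s\square P_t)=2=\max\{2\,sdim_f(P_s),2\,sdim_f(P_t)\}$. The upper bound is attained by products of vertex-transitive graphs, e.g. $G=C_n$, $H=C_m$: Theorem~\ref{v_transitive} gives $sdim_f(C_n\square C_m)=\tfrac{nm}{2}=\min\{|M(C_n)|\,sdim_f(C_m),\,|M(C_m)|\,sdim_f(C_n)\}$. The main obstacle is the characterization lemma, and in particular the claim that $S\{x,y\}$ always contains a full MMD pair. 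It is needed because the bound $2\,sdim_f(G)$ is strictly stronger than the matching bound $2\nu(G_{SR})$ of Proposition~\ref{matchingN} (consider $G$ with $G_{SR}\cong C_5$); thus the lower bound cannot come from the matching or subgraph tools alone, and one must control the fractional strong resolving functions directly.
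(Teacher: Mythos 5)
Your proof is correct, and its two bound arguments are in essence the paper's: the lower bound averages a strong resolving function of $G\square H$ over a pair of columns indexed by an edge of $H_{SR}$ (the paper does the same averaging after first passing to $G\square K_2$ via Lemma~\ref{sr_subgraph}), the upper bound lifts a minimum strong resolving function of $G$ across columns, and the sharpness examples ($P_s\square P_t$ and $C_n\square C_m$) coincide. The genuine difference is that you isolate and prove the characterization lemma --- $g$ is a strong resolving function of $\Gamma$ if and only if $g(u)+g(v)\ge 1$ for every MMD pair, so that $sdim_f(\Gamma)$ is the fractional vertex cover number of $\Gamma_{SR}$ --- whereas the paper uses this equivalence silently and without proof: its lower-bound argument concludes ``since $f$ satisfies $f(u)+f(v)\ge 1$ for any $uv\in E(G_{SR})$, $f$ is a strong resolving function of $G$,'' its upper-bound function is verified only against edges of $(G\square H)_{SR}$, and the proof of Lemma~\ref{sr_subgraph} leans on the same unstated fact. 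Your geodesic-extension proof of the lemma (extend a $y$--$x$ geodesic past $x$ to $u$ maximally distant from $y$, then past $y$ to $v$ maximally distant from $u$; maximality of $u$ from $y$ plus the triangle inequality forces $u$ MMD $v$, with both $u,v\in S\{x,y\}$) is sound, and your remark that the matching bound of Proposition~\ref{matchingN} cannot substitute for it (e.g.\ $G_{SR}\cong C_5$ gives $2\,sdim_f(G)=5>4=2\nu(G_{SR})$) correctly identifies why some such fractional statement is indispensable here. One further point in your favor: your lift is supported on $V(G)\times M(H)$, which is what actually produces the claimed weight $|M(H)|\,sdim_f(G)$; the paper's lift as literally written ($f_{G\square H}((u,w))=f_G(u)$ for all $w\in V(H)$) has weight $|V(H)|\,sdim_f(G)$, so your version quietly repairs that slip as well.
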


\begin{proof}
Let $G$ and $H$ be connected graphs of order at least two. To show the lower bound, it suffices to prove that  $sdim_f(G 
\square H) \ge 2 sdim_f(G)$. Since $(G  \square H)_{SR}=G_{SR} \times H_{SR} \supseteq G_{SR} \times K_2=(G \square 
K_2)_{SR}$ by Theorem~\ref{cartesian}, we have $sdim_f(G \square H) \ge sdim_f(G \square K_2)$ by Lemma~\ref{sr_subgraph}. We will show that $sdim_f(G 
\square K_2) \ge 2 sdim_f(G)$. Let $K_2$ be given by $y_1y_2$; then each vertex $x \in V(G)$ corresponds to two vertices 
$(x,y_1), (x,y_2) \in V(G \times K_2)$. Let $h: V(G \square K_2) \rightarrow [0,1]$ be a strong resolving function of $G 
\square K_2$, and let $f: V(G) \rightarrow [0,1]$ be a function defined by $f(x)=\frac{1}{2}[h((x,y_1))+h((x,y_2))]$ for 
each $x \in V(G)$. Suppose that $x_1x_2 \in E(G_{SR})$. Then $(x_1,y_1)(x_2,y_2) \in E(G_{SR} 
\times K_2)$ and $(x_2,y_1)(x_1,y_2) \in E(G_{SR} \times K_2)$; thus, $h((x_1,y_1))+h((x_2,y_2)) \ge 1$ and $h((x_2,y_1))+h((x_1,y_2))\ge 1$. So, 
$f(x_1)+f(x_2)=\frac{1}{2}[h((x_1,y_1))+h((x_1,y_2))]+\frac{1}{2}[h((x_2,y_1))+h((x_2,y_2))]=\frac{1}{2}[h((x_1,y_1))+h((x_2,y_2))+h((x_2,y_1))+h((x_1,y_2))] 
\ge \frac{1}{2} \cdot 2=1$. Since $f$ satisfies $f(u)+f(v) \ge 1$ for any $uv \in E(G_{SR})$, $f$ is a strong resolving 
function of $G$. Since $h(V(G \square K_2))=2 f(V(G))$ for any strong resolving function $h$, $sdim_f(G \square K_2) \ge 2 
sdim_f(G)$; thus, $sdim_f(G \square H) \ge 2 sdim_f(G)$.

To show the upper bound, it suffices to prove $ sdim_f(G \square H) \le |M(H)|sdim_f(G)$. Let $V(G_{SR})=\{u_1, u_2, \ldots, 
u_n\}$ and $V(H_{SR})=\{w_1, w_2, \ldots, w_m\}$. Let $f_G: V(G) \rightarrow [0,1]$ be a minimum strong resolving function 
of $G$ (i.e., $f_G(V(G))=sdim_f(G)$), and let $f_{G \square H}: V(G \square H) \rightarrow [0,1]$ be a function defined by 
$f_{G \square H} ((u,w))=f_G(u)$ for each $u \in V(G)$ and for each $w \in V(H)$. Suppose that $u_1u_2 \in E(G_{SR})$ by 
relabeling if necessary; then $f_G(u_1)+f_G(u_2) \ge 1$. Notice that each vertex in $H_{SR}$ is incident to at least one 
edge in $H_{SR}$, since each vertex in $H_{SR}$ has degree at least one. For each edge $w_iw_j \in E(H_{SR})$, 
$(u_1,w_i)(u_2,w_j) \in E(G_{SR} \times H_{SR})$ and $(u_2,w_i)(u_1,w_j) \in E(G_{SR} \times H_{SR})$; thus, $f_{G \square 
H}((u_1,w_i))+f_{G \square H}((u_2,w_j))=f_G(u_1)+f_{G}(u_2) \ge 1$ and $f_{G \square H}((u_2,w_i))+f_{G \square 
H}((u_1,w_j))=f_G(u_2)+f_G(u_1) \ge 1$. So, $f_{G \square H}$ is a strong resolving function of $G \square H$ with $f_{G 
\square H}(V(G \square H))=|V(H_{SR})|sdim_f(G)=|M(H)| sdim_f(G)$; thus $sdim_f(G \square H) \le f_{G \square H}(V(G \square 
H))=|M(H)|sdim_f(G)$.

For the sharpness of the lower bound, let $G=P_n$ and $H=P_m$ for $n,m \ge 2$; then $sdim_f(G)=1=sdim_f(H)$ by Theorem~\ref{sdimbounds}(a) and $sdim_f(G \square H)=2$ by Theorem~\ref{sdimFthm}(f). So, $sdim_f(G \square H )=2=\max\{2 sdim_f(G), 2 sdim_f(H)\}$.

For the sharpness of the upper bound, let $G=C_n$ and $H=C_m$ for $n,m \ge 3$; then $sdim_f(G)=\frac{n}{2}$, $sdim_f(H)=\frac{m}{2}$, and $sdim_f(G \square H)=\frac{nm}{2}$ by Theorem~\ref{v_transitive}. Since $|M(G)|=n$ and $|M(H)|=m$, we have $sdim_f(G \square H)=\frac{nm}{2}=\min\{|M(G)|sdim_f(H), |M(H)|sdim_f(G)\}$.~\hfill
\end{proof}

Since $|M(G)| \le |V(G)|$ for any connected graph $G$, we have the following

\begin{cor}
For connected graphs $G$ and $H$ of order at least two,
$$\max\{2sdim_f(G), 2sdim_f(H)\} \le sdim_f(G \square H) \le \min\{|V(G)|sdim_f(H), |V(H)|sdim_f(G)\}.$$
\end{cor}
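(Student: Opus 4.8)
The plan is to derive this corollary directly from Theorem~\ref{thm_Cartesian} by replacing the boundary-vertex counts $|M(G)|$ and $|M(H)|$ with the (generally larger) vertex counts $|V(G)|$ and $|V(H)|$. The lower bound here is \emph{verbatim} the lower bound of Theorem~\ref{thm_Cartesian}, so there is nothing new to prove on that side; the entire content lies in weakening the upper bound.

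First I would recall that the boundary set $M(G)=\{x \in V(G): \exists y \in V(G) \mbox{ with } x \mbox{ MMD } y\}$ is by definition a subset of $V(G)$, and likewise $M(H) \subseteq V(H)$. Consequently $|M(G)| \le |V(G)|$ and $|M(H)| \le |V(H)|$. Since $sdim_f(H) \ge 0$ and $sdim_f(G) \ge 0$ (indeed both are at least $1$ by Theorem~\ref{sdimbounds}), multiplying these inequalities gives $|M(G)|\,sdim_f(H) \le |V(G)|\,sdim_f(H)$ and $|M(H)|\,sdim_f(G) \le |V(H)|\,sdim_f(G)$.

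Next I would observe that taking the minimum of two quantities is monotone: if $a \le a'$ and $b \le b'$, then $\min\{a,b\} \le \min\{a',b'\}$. Applying this with $a=|M(G)|\,sdim_f(H)$, $a'=|V(G)|\,sdim_f(H)$, $b=|M(H)|\,sdim_f(G)$, and $b'=|V(H)|\,sdim_f(G)$ yields
$$\min\{|M(G)|\,sdim_f(H),\, |M(H)|\,sdim_f(G)\} \le \min\{|V(G)|\,sdim_f(H),\, |V(H)|\,sdim_f(G)\}.$$
Chaining this with the upper bound of Theorem~\ref{thm_Cartesian} gives the desired upper bound, and combining with the (identical) lower bound completes the argument.

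Honestly, there is no real obstacle here: the corollary is a routine relaxation of Theorem~\ref{thm_Cartesian}, and the only point requiring any care is confirming that the factors $sdim_f(G)$ and $sdim_f(H)$ are nonnegative so that the inequalities $|M(\cdot)| \le |V(\cdot)|$ survive multiplication. Since Theorem~\ref{sdimbounds} already guarantees $sdim_f \ge 1 > 0$ for every connected graph of order at least two, this is immediate, and the proof is a one-line invocation of monotonicity.
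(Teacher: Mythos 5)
Your proof is correct and matches the paper's own derivation exactly: the paper introduces this corollary with the single remark that $|M(G)| \le |V(G)|$ for any connected graph $G$, then relaxes the upper bound of Theorem~\ref{thm_Cartesian} accordingly, keeping the lower bound verbatim. Your added care about nonnegativity of $sdim_f$ and monotonicity of $\min$ is fine but, as you note, routine.
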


\begin{remark}
We note that, if $sdim_f$ is replaced by $sdim$ in Theorem~\ref{thm_Cartesian}, the lower bound  fails to hold.  
It was shown in~\cite{sdim_Cartesian} that $sdim(G \square H) \ge sdim(G) \cdot sdim(H)$; thus, $sdim(G \square K_2) \ge 
sdim(G)$. We note that there exists a graph $G$ such that $sdim(G \square K_2)<2 sdim(G)$. If $G=C_{2k+1}$ ($k \ge 1$), then 
$G_{SR} \cong C_{2k+1}$ and $G_{SR} \times K_2 \cong C_{4k+2}$. So, $sdim(G \square K_2)=\alpha(G_{SR} 
\times K_2)=2k+1<2(k+1)=2 \alpha (G_{SR})=2sdim(G)$.
\end{remark}


\pagebreak

\section{Appendix}

\begin{figure}[ht]
\centering
\begin{tikzpicture}[scale=.45, transform shape]
\node [draw, shape=circle] (1) at  (-2,4.5) {};
\node [draw, shape=circle] (2) at  (-2,3) {};
\node [draw, shape=circle] (3) at  (-2,1.5) {};
\node [draw, shape=circle] (4) at  (-2,0) {};
\node [draw, shape=circle] (5) at  (-2,-2) {};
\node [draw, shape=circle] (6) at  (-3,-3.5) {};
\node [draw, shape=circle] (7) at  (-1,-3.5) {};

\node [draw, shape=circle] (11) at  (2,5) {};
\node [draw, shape=circle] (22) at  (2,2.5) {};
\node [draw, shape=circle] (33) at  (2,0) {};
\node [draw, shape=circle] (44) at  (2,-2.5) {};
\node [draw, shape=circle] (51) at  (3,5) {};
\node [draw, shape=circle] (61) at  (4,5.7) {};
\node [draw, shape=circle] (71) at  (4,4.3) {};
\node [draw, shape=circle] (52) at  (3,2.5) {};
\node [draw, shape=circle] (62) at  (4,3.2) {};
\node [draw, shape=circle] (72) at  (4,1.8) {};
\node [draw, shape=circle] (53) at  (3,0) {};
\node [draw, shape=circle] (63) at  (4,0.7) {};
\node [draw, shape=circle] (73) at  (4,-0.7) {};
\node [draw, shape=circle] (54) at  (3,-2.5) {};
\node [draw, shape=circle] (64) at  (4,-1.8) {};
\node [draw, shape=circle] (74) at  (4,-3.2) {};

\node [draw, shape=circle] (1b) at  (7,4) {};
\node [draw, shape=circle] (2b) at  (7,1) {};
\node [draw, shape=circle] (3b) at  (7,-2) {};
\node [draw, shape=circle] (1bb) at  (9,4) {};
\node [draw, shape=circle] (2bb) at  (9,1) {};
\node [draw, shape=circle] (3bb) at  (9,-2) {};
\node [draw, shape=circle] (1bbb) at  (11,4) {};
\node [draw, shape=circle] (2bbb) at  (11,1) {};
\node [draw, shape=circle] (3bbb) at  (11,-2) {};
\node [draw, shape=circle] (1bbbb) at  (13,4) {};
\node [draw, shape=circle] (2bbbb) at  (13,1) {};
\node [draw, shape=circle] (3bbbb) at  (13,-2) {};

\node [draw, shape=circle] (1c) at  (16,4) {};
\node [draw, shape=circle] (2c) at  (16,1) {};
\node [draw, shape=circle] (3c) at  (16,-2) {};
\node [draw, shape=circle] (1cc) at  (18,4) {};
\node [draw, shape=circle] (2cc) at  (18,1) {};
\node [draw, shape=circle] (3cc) at  (18,-2) {};
\node [draw, shape=circle] (1ccc) at  (20,4) {};
\node [draw, shape=circle] (2ccc) at  (20,1) {};
\node [draw, shape=circle] (3ccc) at  (20,-2) {};
\node [draw, shape=circle] (1cccc) at  (22,4) {};
\node [draw, shape=circle] (2cccc) at  (22,1) {};
\node [draw, shape=circle] (3cccc) at  (22,-2) {};

\node [draw, shape=circle] (1d) at  (25,4) {};
\node [draw, shape=circle] (2d) at  (25,1) {};
\node [draw, shape=circle] (3d) at  (25,-2) {};
\node [draw, shape=circle] (1dd) at  (27,4) {};
\node [draw, shape=circle] (2dd) at  (27,1) {};
\node [draw, shape=circle] (3dd) at  (27,-2) {};
\node [draw, shape=circle] (1ddd) at  (29,4) {};
\node [draw, shape=circle] (2ddd) at  (29,1) {};
\node [draw, shape=circle] (3ddd) at  (29,-2) {};
\node [draw, shape=circle] (1dddd) at  (31,4) {};
\node [draw, shape=circle] (2dddd) at  (31,1) {};
\node [draw, shape=circle] (3dddd) at  (31,-2) {};

\node [scale=1.4] at (-3.8,2.25) {\large $G$:};
\node [scale=1.4] at (-3.8,-2.5) {\large $H$:};
\node [scale=1.4] at (3,-4) {\large $G \odot H$};
\node [scale=1.4] at (10,-4) {\large $G[H]$};
\node [scale=1.4] at (19,-4) {\large $G \square H$};
\node [scale=1.4] at (28,-4) {\large $G \times H$};

\draw(1)--(2)--(3)--(4);
\draw(5)--(6)--(7)--(5);

\draw(51)--(61)--(71)--(51)--(11)--(22)--(33)--(44)--(54)--(64)--(74)--(54);
\draw(61)--(11)--(71);
\draw(22)--(62)--(72)--(22)--(52)--(62);
\draw(52)--(72);
\draw(33)--(63)--(73)--(33)--(53)--(63);
\draw(53)--(73);
\draw(64)--(44)--(74);

\draw(1b)--(1bb)--(1bbb)--(1bbbb)--(2bbbb)--(2bbb)--(2bb)--(2b)--(3b)--(3bb)--(3bbb)--(3bbbb)--(2bbbb);
\draw(2b)--(1b)--(2bb)--(1bb)--(2bbb)--(1bbb)--(2bbbb)--(3bbb)--(2bbb)--(3bb)--(2bb)--(3b);
\draw(1bb)--(2b)--(3bb)--(1b);
\draw(1bbb)--(2bb)--(3bbb)--(1bb)--(3b);
\draw(1bbbb)--(2bbb)--(3bbbb)--(1bbb)--(3bb);
\draw(3bbb)--(1bbbb);
\draw(1b).. controls (6.2,1).. (3b);
\draw(1bb).. controls (8.2,1).. (3bb);
\draw(1bbb).. controls (10.2,1).. (3bbb);
\draw(1bbbb).. controls (12.2,1).. (3bbbb);

\draw(1c)--(1cc)--(1ccc)--(1cccc)--(2cccc)--(2ccc)--(2cc)--(2c)--(3c)--(3cc)--(3ccc)--(3cccc)--(2cccc);
\draw(1c)--(2c);
\draw(1cc)--(2cc)--(3cc);
\draw(1ccc)--(2ccc)--(3ccc);
\draw(1cccc)--(2cccc)--(3cccc);
\draw(1c).. controls (15.2,1).. (3c);
\draw(1cc).. controls (17.2,1).. (3cc);
\draw(1ccc).. controls (19.2,1).. (3ccc);
\draw(1cccc).. controls (21.2,1).. (3cccc);

\draw(1d)--(2dd)--(1ddd)--(2dddd)--(3ddd)--(2dd)--(3d)--(1dd)--(2ddd)--(1dddd)--(3ddd)--(1dd)--(2d)--(3dd)--(1ddd)--(3dddd)--(2ddd)--(3dd)--(1d);
\end{tikzpicture}
\caption{The product graphs $G \odot H$, $G[H]$, $G \square H$, and $G \times H$ when $G=P_4$ and $H=C_3$.}\label{fig_productex}
\end{figure}
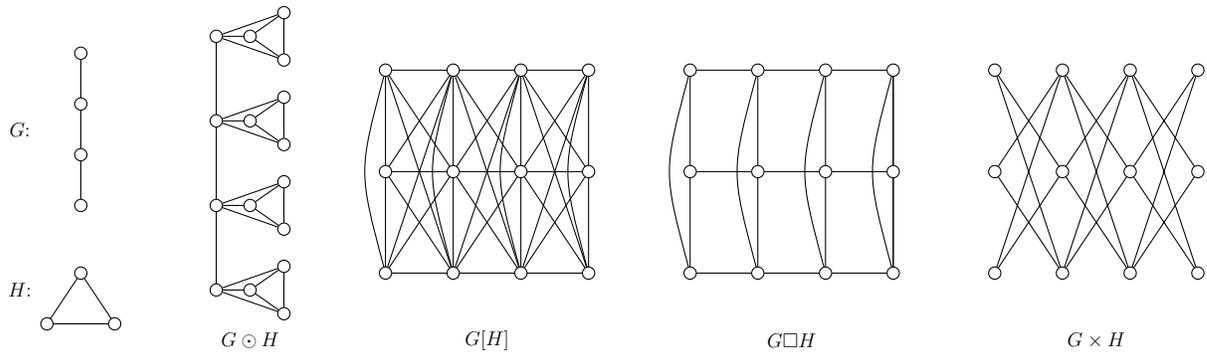

\end{document}